\newtheorem{thm}{Theorem}[section]
\newtheorem{cor}[thm]{Corollary}
\newtheorem{prop}[thm]{Proposition}
\newtheorem{lem}[thm]{Lemma}
\theoremstyle{definition}
\newtheorem{quest}{Question}
\newtheorem{conj}[thm]{Conjecture}
\theoremstyle{remark}
\newtheorem{rem}[thm]{Remark}
\newcommand{\half}{\frac{1}{2}}
\newcommand{\0}{\mathbf{0}}
\newcommand{\TODO}{\textbf{\textcolor{red}{T}\textcolor{green}{O}\textcolor{blue}{D}\textcolor{BurntOrange}{O}}}
\newcommand{\set}[1]{\left\{#1\right\}}
\newcommand{\de}[1]{\textit{#1}}
\newcommand{\sub}{\subseteq}
\newcommand\bovermat[2]{%
  \makebox[0pt][l]{$\smash{\overbrace{\phantom{%
    \begin{matrix}#2\end{matrix}}}^{\text{#1}}}$}#2}
\newcommand{\dense}[1]{\widehat{#1}}
\DeclareMathOperator*{\forb}{forb}
\DeclareMathOperator*{\Avoid}{Avoid}
\title{Forbidden Families of Minimal Quadratic and Cubic Configurations}
\author{Attila Sali}
\address{Alfr\'ed R\'enyi Institute of Mathematics\\Hungarian Academy of Sciences}
\thanks{Research was partially supported by Hungarian National Research, Development and Innovation Office – NKFIH, K116769}
\author{Sam Spiro}
\address{University of Miami}
\thanks{Research was done while the second author took the \emph{Research Opportunties} course at Budapest Semesters in Mathematics under the supervision of the first author}
\begin{document}

\maketitle
\begin{abstract}
     A matrix is \emph{simple} if it is a (0,1)-matrix and there are no repeated columns. Given a (0,1)-matrix $F$, we say a matrix $A$ has $F$ as a \emph{configuration}, denoted $F\prec A$, if there is a submatrix of $A$ which is a row and column permutation of $F$. Let $|A|$ denote the number of columns of $A$. Let $\mathcal{F}$ be a family of matrices. We define the extremal function $\forb(m, \mathcal{F}) = \max\{|A|\colon A \text{ is an }m-\text{rowed simple matrix and has no configuration } F\in\mathcal{F}\}$. We consider pairs $\mathcal{F}=\{F_1,F_2\}$ such that $F_1$ and $F_2$ have no common extremal construction and derive that individually each $\forb(m, F_i)$ has greater asymptotic growth than $\forb(m, \mathcal{F})$, extending research started by Anstee and Koch \cite{AK}.
\end{abstract}
\section{Introduction}

The investigations into the extremal problem of the maximum number of edges in an $n$ vertex graph with no subgraph $H$ originated with Erd\H os and Stone \cite{ES} and Erd\H os and Simonovits \cite{Simon} . There is a large and illustrious literature. A natural  extension to general hypergraphs is to forbid a given \emph{trace}. This latter problem in the language of matrices is our focus. We say a matrix is \emph{simple} if it is a (0,1)-matrix and there are no repeated columns. Given a (0,1)-matrix $F$, we say a matrix $A$ has $F$ as a \emph{configuration}, denoted $F\prec A$, if there is a submatrix of $A$ which is a row and column permutation of $F$. Let $|A|$ denote the number of columns in $A$. We define
\[\Avoid(m,F)=\left\{A\,:\,A\hbox{ is }m\hbox{-rowed simple}, F\nprec A\right\},\]
\[\forb(m,{F})=\max_A\{|A|: A \in\Avoid(m,{F})\}.\]
A simple (0,1)-matrix $A$ can be considered as vertex-edge incidence matrix of a hypergraph without repeated edges. A configuration is a trace of a subhypergraph of this hypergraph. 

Let $A^c$ denote the 0-1-complement of a (0,1)-matrix $A$. It is easy to see that $\forb(m,{F})=\forb(m,{F}^c)$.

 We recall an important conjecture from \cite{AS05}. Let $I_k$ denote the $k\times k$ identity matrix, let $I^c_k$ denote the (0,1)-complement of $I_k$, and let $T_k$ denote the $k\times k$ upper triangular matrix whose $i$th column  has 1's in rows $1,2,\ldots ,i$ and 0's in the remaining rows. For $p$ matrices $m_1\times n_1$ matrix $A_1$, an $m_2\times n_2$ matrix $A_2$,$\dots$, an $m_p\times n_p$ matrix $A_p$  we define $A_1\times A_2\times\cdots\times A_p$ as the $(m_1+\cdots +m_p)\times n_1n_2\cdots n_p$ matrix whose columns consist of all possible combinations obtained from placing a column of $A_1$ on top of a column of $A_2$ on top of a column of $A_3$ etc. For example, the vertex-edge incidence matrix of the complete bipartite graph $K_{m/2,m/2}$ is $I_{m/2}\times I_{m/2}$. Define $1_k$ to be the $k\times 1$ column of 1's and $0_{\ell}$ to be the $\ell\times 1$ column of 0's. 

\begin{conj}\cite{AS05} Let $F$ be a $k\times \ell$ matrix with $F\ne \begin{bmatrix}0\\ 1\end{bmatrix}$. Let $X(F)$ denote the largest $p$ such that there are choices $A_1,A_2,\ldots ,A_p\in\{I_{m/p},I^c_{m/p},T_{m/p}\}$ so that $F\nprec A_1\times A_2\times\cdots \times A_p$. Then $\forb(m,F)=\Theta(m^{X(F)})$.\label{grand}\end{conj}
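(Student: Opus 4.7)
The statement here is a famous open conjecture rather than a theorem that will be proved in the paper, so a full proof is beyond what I could realistically produce. Nevertheless, I can sketch how one naturally splits the problem and where the main difficulty lies.

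The lower bound $\forb(m,F) = \Omega(m^{X(F)})$ is immediate from the definition of $X(F)$. If $p = X(F)$ and $A_1,\ldots,A_p \in \{I_{m/p}, I_k^c, T_{m/p}\}$ witness that $F \nprec A_1 \times \cdots \times A_p$, then the product is an $m$-rowed simple matrix (each factor has pairwise distinct columns, and the product operation preserves simplicity) with $(m/p)^p = \Theta(m^p)$ columns avoiding $F$. So the entire content of the conjecture is concentrated in the upper bound $\forb(m,F) = O(m^{X(F)})$.

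For the upper bound, the plan would be an induction on the number of rows $m$, combined with a careful analysis of the standard decomposition of a simple matrix $A$ relative to some distinguished row $r$. Splitting $A$ according to the entry in row $r$ gives three pieces: columns that appear only with a $0$ in row $r$, columns that appear only with a $1$, and the \emph{repeated} columns that appear with both values (whose number is the quantity traditionally denoted by a hat, as in the Anstee--Sali literature). The first two pieces contribute to $\forb(m-1, F)$, while the repeated part is the key object, since by discarding row $r$ it becomes an $(m-1)$-rowed simple matrix avoiding the induced configurations obtained by deleting row $r$ from $F$. One then needs to show that the induced forbidden configurations force the repeated part to satisfy $\forb \le O(m^{X(F)-1})$, which summed over $m$ recovers the $O(m^{X(F)})$ bound; this is where the invariant $X(F)$ enters the recursion, since deleting a row from $F$ typically drops $X$ by one.

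The main obstacle, and the reason this remains a conjecture, is controlling the induced configurations after row deletion in a way that tracks $X(F)$ exactly. In general, deleting a row can create several different residual forbidden configurations, and one must argue that the product structure of the extremal construction survives the decomposition. Partial results in the literature handle special classes: small $F$, $F$ with bounded column sums, and configurations for which one can identify a \textbf{standard decomposition} compatible with one of the factors $I, I^c, T$. A full proof would require either a unified combinatorial argument that identifies, for each $F$, which of the three factor types controls the extremal structure, or a new algebraic/probabilistic framework replacing the ad hoc row-deletion inductions. I would therefore treat this conjecture as motivation for the forbidden family results proved in the paper (where one avoids several configurations simultaneously and the asymptotics drop below $\forb(m,F_i)$ for each $F_i$), rather than as something to attack head-on.
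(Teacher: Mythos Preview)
You correctly identified that this is an open conjecture, not a theorem; the paper likewise states it only as a conjecture from \cite{AS05} and offers no proof, using it purely as motivation. Your sketch of the trivial lower bound and your diagnosis of the upper-bound difficulty are accurate and align with the standard understanding in the literature.
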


We are assuming $p$ divides $m$ which does not affect asymptotic bounds.

It is natural to extend the concepts of $\Avoid(m,F)$ and $\forb(m,F)$ to the case when not just a single configuration, but a family $\mathcal{F}=\{F_1,F_2,\ldots ,F_r\}$ of configurations is forbidden. 
\[\Avoid(m,\mathcal{F})=\left\{A\,:\,A\hbox{ is }m\hbox{-rowed simple}, F\nprec A\hbox{ for all } F\in\mathcal{F} \right\},\]
\[\forb(m,\mathcal{F})=\max_A\{|A|: A \in\Avoid(m,\mathcal{F})\}.\]
One important result in this area is the following theorem of Balogh and Bollob\'as \cite{BB}.
\begin{thm}[Balogh and Bollob\'as, 2005]\label{thm:BB}
For a given $k$, there is a constant $BB(k)$ such that $\forb(m,\{I_k,T_k,I^c_k\})=BB(k)$.
\end{thm}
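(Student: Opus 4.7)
The plan is to prove the bound by a Ramsey-theoretic argument, viewing the columns of any matrix $A \in \Avoid(m, \{I_k, T_k, I_k^c\})$ as subsets $A_1, A_2, \ldots, A_N \sub [m]$. Since $A$ is simple, any two columns satisfy exactly one of: $A_i \subsetneq A_j$, $A_j \subsetneq A_i$, or $A_i$ and $A_j$ are incomparable. This yields a $3$-coloring of pairs, so by Ramsey's theorem there is $n = n(k)$ such that if $N \geq R(n,n,n)$ then a monochromatic subset of size $n$ must appear.

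If the monochromatic color is $\subsetneq$ or $\supsetneq$, we obtain a strict chain $A_{i_1} \subsetneq \cdots \subsetneq A_{i_n}$. For each consecutive pair, pick a row where the smaller set has entry $0$ and the larger set has entry $1$; by monotonicity of the chain the resulting $(n-1) \times n$ submatrix has entry $(t, s)$ equal to $1$ iff $s > t$, and dropping its all-zero first column leaves exactly $T_{n-1}$. Choosing $n \geq k+1$ then produces $T_k$, contradicting $T_k \not\prec A$.

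In the remaining case we have an antichain of $n$ pairwise incomparable sets. The main technical step is an auxiliary lemma: there is a function $g(k)$ such that any antichain of at least $g(k)$ distinct sets contains $I_k$ or $I_k^c$ as a configuration. With this lemma in hand, taking $n \geq \max(k+1, g(k))$ completes the proof and gives the bound $BB(k) \leq R(n,n,n) - 1$. To prove the lemma, one applies a second Ramsey step: for each ordered pair $(i, j)$ in the antichain designate a canonical witness $x_{ij} \in A_i \setminus A_j$, and color each triple $(i, j, \ell)$ by the incidence pattern of $x_{ij}$ and $x_{ji}$ with $A_\ell$. A sufficiently large monochromatic subset is rigid enough that one can read off either $k$ private elements (giving $I_k$) or $k$ ``coprivate'' elements (giving $I_k^c$).

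The main obstacle is the auxiliary antichain lemma. The family $A_i = [k] \setminus \{i\}$ is an antichain of size $k$ containing $I_k^c$ but no $I_k$, so both outcomes of the lemma are genuinely needed and one cannot hope to always extract $I_k$ alone. The proof therefore has to exploit the $I_k \vee I_k^c$ disjunction, which is precisely where the iterated Ramsey step earns its keep; the subsequent reading-off step is a finite but delicate case analysis on the residual monochromatic incidence pattern, and in practice it may be cleaner to run it by induction on $k$, applying the inductive bound $BB(k-1)$ to subfamilies obtained by conditioning on the presence or absence of previously-chosen witness elements.
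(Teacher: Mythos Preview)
The paper does not contain a proof of this theorem; it is quoted as a result of Balogh and Bollob\'as \cite{BB} and used as a black box throughout. There is therefore no proof in the paper to compare your attempt against.

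On the merits of your outline: the first Ramsey step and the chain case are clean and correct. The proof stands or falls with your auxiliary antichain lemma, and here your write-up is only a sketch. Two specific points deserve attention. First, the triple-coloring you describe is a coloring of \emph{ordered} triples; passing to unordered Ramsey means carrying six binary constants (one for each ordering of $\{a,b,c\}$), and the subsequent ``reading off'' is a genuine $2^6$-case analysis, not all of whose cases yield $I_k$ or $I_k^c$ from the most obvious witness family $x_{i,i+1}$ --- some cases first produce a triangular pattern and require a different choice of witnesses (e.g.\ $x_{i,m}$ or $x_{i,1}$). You have not done this analysis, and it is where the work lies. Second, your proposed inductive alternative invokes $BB(k-1)$ while proving the antichain lemma for $k$; this is not circular provided you set it up as a joint induction (Balogh--Bollob\'as at level $k-1$ $\Rightarrow$ antichain lemma at level $k$ $\Rightarrow$ Balogh--Bollob\'as at level $k$), but you should say so explicitly, and you still need to show how conditioning on a witness element lets you extend an $I_{k-1}$ or $I_{k-1}^c$ found downstream --- the naive split does not guarantee that the column you set aside is compatible with whichever of the two configurations the inductive call returns.

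In short: the strategy is sound and the lemma you isolate is the right target, but the proposal as written stops at the hard step rather than completing it.
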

The best current estimate for $BB(k)$ is due to Anstee and Lu \cite{ALU}, $BB(k)\le 2^{ck^2}$ where $c$ is absolute constant, independent of $k$.
It could be tempting to extend Conjecture~\ref{grand} to the case of forbidden families, as well. However, as it was shown in \cite{AKRS} $\forb(m,\left\{I_2\times I_2,T_2\times T_2\right\})$ is $\Theta(m^{3/2})$ despite the only products missing both $I_2\times I_2$ and $T_2\times T_2$.are  one-fold products. An even stronger observation is made in Remark~\ref{rem:no-conj}.

In the present paper we continue the investigations started in \cite{AK}. Anstee and Koch determined $\forb(m,\{F,G\})$ for all pairs $\{F,G\}$, where both members are \emph{minimal quadratics}, that is both $\forb(m,F)=\Theta(m^2)$ and $\forb(m,G)=\Theta(m^2)$, but no proper subconfiguration of $F$ or $G$ is quadratic. We take this one step further.  That is, we consider cases when one of $F$ or $G$ is a simple minimal cubic configuration and the other one is a minimal quadratic or minimal simple cubic. Our results are summarized in Table~\ref{tab::results}. We solve all cases when the minimal simple cubic configuration has four rows. If Conjecture~8.1 of  \cite{survey} is true, then there are no minimal simple cubic configurations on 5 rows. The six-rowed ones are discussed in Section~8. The remaining case is $\forb(m,Q_8,F_{14})$, where we believe that non-existence of common quadratic product construction indicates that the order of magnitude is $o(m^2)$.

The structure of the paper is as follows. In Section~2 product constructions and bounds implied by them are treated. Then in Section~3 upper bounds implied by the \textit{standard induction} technique (\cite{survey}, Section~11) are given. These combined with product constructions give asymptotically sharp bounds for many pairs of configurations. Sections~4, 5, 6 and 7 deal with specific configurations. In Section~4 a stability theorem is proven for matrices avoiding the configuration $Q_3(t)$, which is a generalization of the configuration $Q_3$ (see Table~\ref{tab:minquad}), and this theorem is applied to prove forbidden pairs results involving $Q_3(t)$. Section~5 contains cases when one member of the forbidden pairs is a block of 1's. This naturally involves extremal graph and hypergraph results, as forbidding $1_{k,1}$ restricts the hypergraph corresponding to our simple (0,1)-matrix to be of \textit{rank}-$(k-1)$, that is edges are of size at most $k-1$. Interestingly enough, in one case we use a very recent theorem  of Alon and Shikhelman \cite{ash} combined with an old fundamental result of F\"uredi \cite{furedi-kernel}. Section~6 considers $F_9$ (see Table~\ref{tab:mincub}). Interestingly, some exact results are also obtained.  Section~7 deals with $Q_9$ of Table~\ref{tab:minquad} based on the characterization  of $Q_9$ avoiding matrices of \cite{ABS11}. Finally, in Section~8 we observe that $\forb(m,\set{F,G})$ is quadratic if $F$ is a minimal quadratic and $G$ is a 6-rowed minimal cubic in all but one case. 

Throughout the paper we use standard extremal graph and hypergraph notations, such as $ex(m,G)$ to denote the largest number of edges a graph on $m$ vertices can have without containing a subgraph isomorphic to $G$, or $ex^{(k)}(m,\mathcal{H})$ for the largest number of edges a $k$-uniform hypergraph can have without containing a subhypergraph $\mathcal{H}$. The complete $k$-partite $k$-uniform hypergraph on partite sets of sizes $s_1,\ldots,s_k$, respectively is denoted by $K(s_1,\ldots,s_k)$. 
Also, when forbidden pairs of configurations are considered, we use the notational simplification $\forb(m,\set{F,G})=\forb(m,F,G)$ for typesetting convenience. We allow ourselves the ambiguity of writing $I\times I^c$ instead of the technically precise $I_{m/2}\times I^c_{m/2}$ in product constructions.

\section{Product Constructions}

What follows are tables of all minimal quadratic configurations and simple minimal cubic configurations with 4 rows.  In addition to the configurations, we have included a list of all 2-fold and 3-fold products of $I,\ I^c$ and $T$ that avoid these configurations.  The list of constructions avoiding quadratic configurations comes from \cite{AK}, and the lists for cubic configurations are proved in Section~2, with the statement that proves the result listed under ``Proposition.'' 

\begin{center}
\captionof{table}{Minimal Quadratic Configurations}\label{tab:minquad}
\begin{tabular}{ | c | c | c | } 
\hline
 & Configuration  $Q_i$ &  Construction(s) \\ 
\hline
$1_{3,1}$ & $\begin{bmatrix}1\\ 1\\  1\end{bmatrix}$ & $I\times I$ \\ 
\hline
$1_{2,2}$ & $\begin{bmatrix}1 & 1\\ 1 & 1\end{bmatrix}$ & $I\times I$ \\ 
\hline
$I_3$ & $\begin{bmatrix}1 & 0 & 0\\ 0 & 1 & 0\\ 0 & 0 & 1\end{bmatrix}$ & $\begin{matrix}I^c\times I^c \\ I^c\times T\\ T\times T \end{matrix}$\\ 
\hline
$Q_3$  & $\begin{bmatrix}0 & 0 & 0 & 1 & 1 & 1\\ 0 & 1 & 1 & 0 & 0 & 1\end{bmatrix}$ & $I\times I^c$\\ 
\hline
$Q_8$ & $\begin{bmatrix}0 & 0 & 1 & 1\\ 1 & 0 & 1 & 0\\ 0 & 1 & 0 & 1\end{bmatrix}$ & $T\times T$\\ \hline 
$Q_9$  & $\begin{bmatrix}1 & 0\\ 1 & 0\\ 0 & 1\\ 0 & 1\end{bmatrix}$ & $\begin{matrix} I\times T\\ I^c\times T\end{matrix}$\\ 
\hline

\end{tabular}
\end{center}
Note that we have not included the complements of $1_{3,1},\ 1_{2,2},\ $and $I_3$ in this table, even though these  are  also minimal quadratic configurations.  This is because if $Q$ denotes any of these configurations then $\forb(m,Q,F)=\forb(m,Q^c,F^c)$, which  is already included in Table~\ref{tab::results}.

\begin{center}
\captionof{table}{Minimal Simple Cubic Configurations with 4 Rows}\label{tab:mincub}
\begin{tabular}{ | c | c | c |  c| c|}
\hline 
 & Configuration  $F_i$ & Quadratic Const.(s) & Cubic Const.(s) & Proposition\\ 
\hline
$1_{4,1}$ & $\begin{bmatrix}1\\ 1\\  1\\ 1\end{bmatrix}$ & $I\times I$ & $I\times I\times I$ & Prop.~\ref{Const1}\\ 
\hline
$F_{9}$ & $\begin{bmatrix}1 & 0 & 0\\ 0 & 1 & 0 \\ 0 & 0 & 1\\ 0 & 0 & 1\end{bmatrix}$ & $\begin{matrix}I^c\times I^c\\ I^c\times T\\ T\times T\end{matrix}$ & $\begin{matrix}I^c\times I^c\times T\end{matrix}$ & Prop.~\ref{ConstF9}\\ 
\hline
$F_{10}$ & $\begin{bmatrix}1 & 0 & 0\\ 0 & 1 & 0 \\ 0 & 0 & 1\\ 0 & 0 & 0\end{bmatrix}$ &  $\begin{matrix}I^c\times I^c\\ I^c\times T\\ T\times T\end{matrix}$ & $\begin{matrix}I^c\times I^c\times T\end{matrix}$ & Prop.~\ref{ConstF9}\\ 
\hline
$F_{11}$ & $\begin{bmatrix}1 & 0 & 1 & 0\\ 1 & 0 & 0 & 1\\ 0 & 1 & 1 & 0\\0 & 1 & 0 & 1\end{bmatrix}$ & $\begin{matrix}I\times T\\ I^c\times T\\ T\times T\end{matrix}$ & $\begin{matrix}T\times T\times T\end{matrix}$ & Prop.~\ref{ConstF11}\\ 
\hline
$F_{12}$ & $\begin{bmatrix}1 &0 & 0 & 1\\ 0 & 1 & 0 & 1\\ 0 & 0 & 1 & 1\\ 1 & 1 & 1 & 0\end{bmatrix}$ & All & All & Lem.~\ref{L-all prod}\\ 
\hline
$F_{13}$ & $\begin{bmatrix}1 & 1 & 0 & 0\\ 0 & 1 & 1 & 0\\ 0 & 1 & 0 & 1\\ 0 & 0 & 1 & 1\end{bmatrix}$ & All & $\begin{matrix}T\times T \times T \end{matrix}$ & $\begin{matrix} \text{Lem.~\ref{L-all prod}}\\ \text{Prop.~\ref{ConstF13}}\end{matrix}$\\ 
\hline
\end{tabular}
\end{center}

In addition to this, the compliment of $1_{4,1}$ (which we denote by $0_{4,1}$), $F_9^c,\ F_{10}^c,\ $and $F_{12}^c$ are minimal simple cubic configurations, and the products avoiding these configurations are the complements of the products avoiding their complements.

Table~\ref{tab::results} contains the asymptotic values for all pairings of the configurations mentioned above when at least one of the configurations is cubic.  We note that all exact results stated below hold for $m$ sufficiently large.

\begin{center}
\captionof{table}{Results}\label{tab::results}
\begin{tabular}{ | c | c | c |  c| c | c | c |  c| c| c| c|}
\hline 
& $1_{4,1}$  & $F_9$ & $F_{10}$  & $F_{11}$ & $F_{12}$ & $F_{13}$&  $0_{4,1}$& $F_9^c$ & $F_{10}^c$& $F_{12}^c$ \\ 
\hline
$1_{3,1}$ & $\begin{matrix} \Theta(m^2)\\ \text{Rm }\ref{Rem}\end{matrix}$ & $\begin{matrix}m+2 \\ \text{Cr }\ref{1F9}\end{matrix}$ & $\begin{matrix}\Theta(1) \\ \text{Cr }\ref{1BB}\end{matrix}$  & $\begin{matrix}\Theta(m^{3/2}) \\ \text{Cr }\ref{1-1F11}\end{matrix}$ & $\begin{matrix}\Theta(m^2) \\ \text{Rm }\ref{Rem}\end{matrix}$  & $\begin{matrix} \Theta(m^2)\\ \text{Rm }\ref{Rem}\end{matrix}$ & $\begin{matrix} \Theta(1)\\ \text{Cr }\ref{1BB}\end{matrix}$ & $\begin{matrix} \Theta(m^2)\\ \text{Rm }\ref{Rem}\end{matrix}$ & $\begin{matrix}\Theta(m^2) \\ \text{Rm }\ref{Rem}\end{matrix}$& $\begin{matrix}\Theta(m^2) \\ \text{Rm }\ref{Rem}\end{matrix}$\\ 
\hline
$1_{2,2}$ & $\begin{matrix} \Theta(m^2)\\ \text{Rm }\ref{Rem}\end{matrix}$ & $\begin{matrix}m+3 \\ \text{Cr }\ref{1F9}\end{matrix}$ & $\begin{matrix}\Theta(1) \\ \text{Cr }\ref{1BB}\end{matrix}$  & $\begin{matrix}\Theta(m^{3/2}) \\ \text{Cr }\ref{1-2F11}\end{matrix}$ & $\begin{matrix}\Theta(m^2) \\ \text{Rm }\ref{Rem}\end{matrix}$  & $\begin{matrix} \Theta(m^2)\\ \text{Rm }\ref{Rem}\end{matrix}$ & $\begin{matrix} \Theta(1)\\ \text{Cr }\ref{1BB}\end{matrix}$ & $\begin{matrix} \Theta(m^2)\\ \text{Rm }\ref{Rem}\end{matrix}$ & $\begin{matrix}\Theta(m^2) \\ \text{Rm }\ref{Rem}\end{matrix}$& $\begin{matrix}\Theta(m^2) \\ \text{Rm }\ref{Rem}\end{matrix}$\\ 

\hline
$I_3$ & $\begin{matrix} \Theta(1)\\ \text{Cr }\ref{1BB}\end{matrix}$ & $\begin{matrix}\Theta(m^2) \\ \text{Rm }\ref{Rem}\end{matrix}$ & $\begin{matrix}\Theta(m^2) \\ \text{Rm }\ref{Rem}\end{matrix}$ & $\begin{matrix}\Theta(m^2) \\ \text{Rm }\ref{Rem}\end{matrix}$ & $\begin{matrix}\Theta(m^2) \\ \text{Rm }\ref{Rem}\end{matrix}$ & $\begin{matrix}\Theta(m^2) \\ \text{Rm }\ref{Rem}\end{matrix}$ & $\begin{matrix}\Theta(m^2) \\ \text{Rm }\ref{Rem}\end{matrix}$ & $\begin{matrix}\Theta(m^2) \\ \text{Rm }\ref{Rem}\end{matrix}$ & $\begin{matrix}\Theta(m^2) \\ \text{Rm }\ref{Rem}\end{matrix}$ & $\begin{matrix}\Theta(m^2) \\ \text{Rm }\ref{Rem}\end{matrix}$ \\ 
\hline
$Q_{3}$ & $\begin{matrix}\Theta(m) \\ \text{Cr }\ref{1Q3}\end{matrix}$  & $\begin{matrix} \Theta(m) \\ \text{Th }\ref{F9}\end{matrix}$  & $\begin{matrix} \Theta(m) \\ \text{Cr }\ref{1Q3}\end{matrix}$  & $\begin{matrix} \Theta(m^{3/2}) \\ \text{Cr }\ref{Q3F11} \\ \end{matrix}$ & $\begin{matrix}\Theta(m^2) \\ \text{Rm }\ref{Rem}\end{matrix}$ & $\begin{matrix} \Theta(m^2) \\ \text{Rm }\ref{Rem} \end{matrix}$& $\begin{matrix} \Theta(m) \\ \text{Cr }\ref{1Q3}\end{matrix}$ & $\begin{matrix} \Theta(m) \\ \text{Th }\ref{F9}\end{matrix}$& $\begin{matrix}\Theta(m)  \\ \text{Cr }\ref{1Q3}\end{matrix}$ & $\begin{matrix}\Theta(m^2) \\ \text{Rm }\ref{Rem} \end{matrix}$\\ 
\hline

$Q_{8}$ &  $\begin{matrix}\Theta(m) \\ \text{Pr }\ref{Q8}\end{matrix}$ & $\begin{matrix}\Theta(m^2) \\ \text{Rm }\ref{Rem}\end{matrix}$ & $\begin{matrix}\Theta(m^2) \\ \text{Rm }\ref{Rem}\end{matrix}$ & $\begin{matrix}\Theta(m^2) \\ \text{Rm }\ref{Rem}\end{matrix}$ & $\begin{matrix}\Theta(m^2) \\ \text{Rm }\ref{Rem}\end{matrix}$ & $\begin{matrix}\Theta(m^2) \\ \text{Rm }\ref{Rem}\end{matrix}$ & $\begin{matrix}\Theta(m) \\ \text{Pr }\ref{Q8}\end{matrix}$ & $\begin{matrix}\Theta(m^2) \\ \text{Rm }\ref{Rem}\end{matrix}$ & $\begin{matrix}\Theta(m^2) \\ \text{Rm }\ref{Rem}\end{matrix}$ & $\begin{matrix}\Theta(m^2) \\ \text{Rm }\ref{Rem}\end{matrix}$ \\ 
\hline

$Q_{9}$ &  $\begin{matrix}3m-2 \\ \text{Cr }\ref{Q9}\end{matrix}$ & $\begin{matrix}\Theta(m^2) \\ \text{Rm }\ref{Rem}\end{matrix}$ & $\begin{matrix}\Theta(m^2) \\ \text{Rm }\ref{Rem}\end{matrix}$ & $\begin{matrix}\Theta(m^2) \\ \text{Rm }\ref{Rem}\end{matrix}$ & $\begin{matrix}\Theta(m^2) \\ \text{Rm }\ref{Rem}\end{matrix}$ & $\begin{matrix}\Theta(m^2) \\ \text{Rm }\ref{Rem}\end{matrix}$ & $\begin{matrix}3m-2 \\ \text{Cr }\ref{Q9}\end{matrix}$ & $\begin{matrix}\Theta(m^2) \\ \text{Rm }\ref{Rem}\end{matrix}$ & $\begin{matrix}\Theta(m^2) \\ \text{Rm }\ref{Rem}\end{matrix}$ & $\begin{matrix}\Theta(m^2) \\ \text{Rm }\ref{Rem}\end{matrix}$ \\ 
\hline
\hline

$1_{4,1}$ &  & $\begin{matrix} m+5\\ \text{Cr }\ref{1F9}\end{matrix}$ & $\begin{matrix} \Theta(1)\\ \text{Cr }\ref{1BB}\end{matrix}$ & $\begin{matrix} \Theta(m^{3/2})\\ \text{Pr }\ref{Pr 1F11}\end{matrix}$ & $\begin{matrix} \Theta(m^3)\\ \text{Rm }\ref{Rem}\end{matrix}$ & $\begin{matrix} \Theta(m^2)\\ \text{Pr }\ref{P-F9 and10} \end{matrix}$ & $\begin{matrix} \Theta(1)\\ \text{Cr }\ref{1BB}\end{matrix}$ & $\begin{matrix} \Theta(m^3)\\ \text{Rm }\ref{Rem}\end{matrix}$  & $\begin{matrix} \Theta(m^3)\\ \text{Rm }\ref{Rem}\end{matrix}$  & $\begin{matrix} \Theta(m^3)\\ \text{Rm }\ref{Rem}\end{matrix}$  \\ 
\hline

$F_{9}$ & & & $\begin{matrix} \Theta(m^3)\\ \text{Rm }\ref{Rem}\end{matrix}$ & $\begin{matrix} \Theta(m^2)\\ \text{Pr }\ref{P-F9 and10} \end{matrix}$ & $\begin{matrix} \Theta(m^3)\\ \text{Rm }\ref{Rem}\end{matrix}$ & $\begin{matrix} \Theta(m^2)\\ \text{Pr }\ref{P-F9 and10} \end{matrix}$ & $\begin{matrix} \Theta(m^3)\\ \text{Rm }\ref{Rem}\end{matrix}$ & $\begin{matrix} \Theta(m^2)\\ \text{Pr }\ref{F9c} \end{matrix}$& $\begin{matrix} \Theta(m^2)\\ \text{Pr }\ref{F9c} \end{matrix}$ & $\begin{matrix} \Theta(m^3)\\ \text{Rm }\ref{Rem}\end{matrix}$ \\ 
\hline

$F_{10}$ & & &  & $\begin{matrix} \Theta(m^2)\\ \text{Pr }\ref{P-F9 and10} \end{matrix}$ & $\begin{matrix} \Theta(m^3)\\ \text{Rm }\ref{Rem}\end{matrix}$ & $\begin{matrix} \Theta(m^2)\\ \text{Pr }\ref{P-F9 and10} \end{matrix}$ & $\begin{matrix} \Theta(m^3)\\ \text{Rm }\ref{Rem}\end{matrix}$ & $\begin{matrix} \Theta(m^2)\\ \text{Pr }\ref{F9c} \end{matrix}$& $\begin{matrix} \Theta(m^2)\\ \text{Pr }\ref{F9c} \end{matrix}$ & $\begin{matrix} \Theta(m^3)\\ \text{Rm }\ref{Rem}\end{matrix}$ \\ 
\hline

$F_{11}$ & & &  & & $\begin{matrix} \Theta(m^3)\\ \text{Rm }\ref{Rem}\end{matrix}$ & $\begin{matrix} \Theta(m^3)\\ \text{Rm }\ref{Rem}\end{matrix}$ & $\begin{matrix} \Theta(m^{3/2})\\ \text{Pr }\ref{Pr 1F11} \end{matrix}$ & $\begin{matrix} \Theta(m^2)\\ \text{Pr }\ref{P-F9 and10} \end{matrix}$& $\begin{matrix} \Theta(m^2)\\ \text{Pr }\ref{P-F9 and10}\end{matrix}$ & $\begin{matrix} \Theta(m^3)\\ \text{Rm }\ref{Rem}\end{matrix}$ \\ 
\hline

$F_{12}$ & & &  & &  & $\begin{matrix} \Theta(m^3)\\ \text{Rm }\ref{Rem}\end{matrix}$ & $\begin{matrix} \Theta(m^3)\\ \text{Rm }\ref{Rem}\end{matrix}$ & $\begin{matrix} \Theta(m^3)\\ \text{Rm }\ref{Rem}\end{matrix}$& $\begin{matrix} \Theta(m^3)\\ \text{Rm }\ref{Rem}\end{matrix}$ & $\begin{matrix} \Theta(m^3)\\ \text{Rm }\ref{Rem}\end{matrix}$\\ 
\hline

$F_{13}$ & & &  & &  &  &$\begin{matrix} \Theta(m^2)\\ \text{Pr }\ref{P-F9 and10}\end{matrix}$ & $\begin{matrix} \Theta(m^2)\\ \text{Pr }\ref{P-F9 and10}\end{matrix}$& $\begin{matrix} \Theta(m^2)\\ \text{Pr }\ref{P-F9 and10}\end{matrix}$ & $\begin{matrix} \Theta(m^3)\\ \text{Rm }\ref{Rem}\end{matrix}$\\ 
\hline

\end{tabular}
\end{center}

In this section we determine all product constructions that avoid the minimal cubic configurations mentioned above, where we note that if a configuration $A$ is avoided by the product $B$ then $A^c$ is avoided by the product $B^c$.  We will then be able to obtain most of our lower bound results from the following observation:
\begin{rem}\label{Rem}
    If $F$ and $G$ are both avoided by the same $p$-fold product construction then $\forb(m,F,G)=\Omega(m^p)$.
\end{rem}
We note that proving $\forb(m,F,G)=\Omega(m^2)$ when either $F$ or $G$ is a minimal quadratic configuration implies that $\forb(m,F,G)=\Theta(m^2)$, and similarly if $\forb(m,F,G)=\Omega(m^3)$ for $F$ or $G$ a minimal cubic configuration then $\forb(m,F,G)=\Theta(m^3)$.

\begin{prop}\label{Const1}
    The only 2-fold product avoiding $1_{4,1}$ is $I\times I$.  The only 3-fold product avoiding $1_{4,1}$ is $I\times I\times I$. 
\end{prop}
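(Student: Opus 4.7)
The observation driving the proof is that $1_{4,1}$ is a single column of four $1$'s, so $1_{4,1}\prec A$ if and only if some column of $A$ contains at least four $1$'s. For a product $A_1\times A_2\times\cdots\times A_p$ with each $A_i\in\{I_{m/p},I^c_{m/p},T_{m/p}\}$, every column is obtained by stacking one column chosen from each factor, so the number of $1$'s in a product column is exactly the sum of the numbers of $1$'s in the chosen columns. The plan is therefore to count $1$'s in columns of each factor type and see which combinations keep every column of the product below four $1$'s.

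First I would record the column weights: every column of $I_{m/p}$ has exactly one $1$; every column of $I^c_{m/p}$ has exactly $m/p-1$ ones; and columns of $T_{m/p}$ range from one $1$ (the first column) up to $m/p$ ones (the last column). So for $m/p$ sufficiently large (say $m/p\ge 5$), a single column of $I^c_{m/p}$ already contains at least four $1$'s, and the last column of $T_{m/p}$ contains at least four $1$'s for $m/p\ge 4$.

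From this the casework is immediate: if any factor $A_i$ equals $I^c_{m/p}$ or $T_{m/p}$, then by stacking a heavy column of $A_i$ (any column in the $I^c$ case, the last column in the $T$ case) on top of arbitrary columns of the remaining factors, we obtain a column of the product with at least $4$ ones once $m$ is large enough. Hence $1_{4,1}\prec A_1\times\cdots\times A_p$. Conversely, if every $A_i=I_{m/p}$, each column of the product has exactly $p$ ones, which is $2<4$ when $p=2$ and $3<4$ when $p=3$, so $1_{4,1}\not\prec I\times I$ and $1_{4,1}\not\prec I\times I\times I$.

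The argument is short and the only thing to be careful about is the asymptotic convention: for fixed small $m$ some non-identity products do avoid $1_{4,1}$ (for example, $I_2\times T_2$ has no column of four $1$'s), but the statement is understood to hold for all sufficiently large $m$, which is exactly what the weight-counting above provides. There is no genuine obstacle; it is a matter of organizing the three cases cleanly.
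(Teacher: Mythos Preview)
Your argument is correct and is essentially the same as the paper's: both observe that $1_{4,1}\prec I^c_{m/p},T_{m/p}$ for large $m$ (so any product involving these contains $1_{4,1}$), and that columns of $I\times I\times I$ have only three $1$'s (hence $1_{4,1}\nprec I\times I\times I$, and a fortiori $1_{4,1}\nprec I\times I$). Your version is simply more explicit about the column-weight counting and the asymptotic caveat.
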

\begin{proof}
    Note that  $1_{4,1}\prec I_5^c,T_5$, so any product using $I^c$ or $T$ will contain $1_{4,1}$.  There are only three 1's in each  column of $I\times I\times I$, so $1_{4,1}\nprec I\times I\times I$, and it follows that $1_{4,1}\nprec I\times I$ as well.
\end{proof}

\begin{lem}\label{L-F9}
     $F_9,F_{10},F_9^c,F_{10}^c\prec [01]\times [01]\times T_4$.
\end{lem}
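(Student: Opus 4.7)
The plan is to verify each of the four containments by explicitly exhibiting a choice of three columns of $[01]\times[01]\times T_4$ and four of its rows so that the resulting $4\times 3$ submatrix equals the target configuration. Since $[01]\times[01]\times T_4$ is only a $6\times 16$ matrix, no indirect argument is needed; the entire proof is a bookkeeping exercise. I would index the columns of the product by triples $(a,b,j)$, where $(a,b)\in\{0,1\}^2$ occupies rows $1$ and $2$ of the column and $j\in\{1,2,3,4\}$ selects the $j$-th column of $T_4$ (which has ones in rows $1,\dots,j$ and zeros below) for rows $3$--$6$.

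For $F_9$ and $F_{10}$ the plan is to take the three columns $(1,0,1)$, $(0,1,1)$, $(0,0,j)$ of the product and restrict to rows $1,2,4,5$. The resulting submatrix is
\[\begin{bmatrix}1&0&0\\0&1&0\\0&0&x\\0&0&y\end{bmatrix},\]
where $(x,y)$ are the second and third entries of the $j$-th column of $T_4$; setting $j=3$ gives $(x,y)=(1,1)$ and the submatrix equals $F_9$, while $j=2$ gives $(x,y)=(1,0)$ and the submatrix equals $F_{10}$.

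For $F_9^c$ and $F_{10}^c$ I would instead take the columns $(0,1,4)$, $(1,0,4)$, $(1,1,j)$ and restrict to rows $1,2,6,5$. The resulting submatrix is
\[\begin{bmatrix}0&1&1\\1&0&1\\1&1&0\\1&1&z\end{bmatrix},\]
where $z$ is the third entry of the $j$-th column of $T_4$; $j=2$ yields $z=0$ and recovers $F_9^c$, while $j=3$ yields $z=1$ and recovers $F_{10}^c$. The only care needed is to pick the row order so that each submatrix appears in exactly the presented form of its configuration; there is no genuine conceptual obstacle.
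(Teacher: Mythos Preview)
Your proof is correct. The paper's own argument is essentially the same verification, phrased more structurally in a single sentence: it observes that the last two rows of each of $F_9,F_{10},F_9^c,F_{10}^c$ already form a configuration in $T_4$, and then uses the two $[01]$ factors to absorb the remaining two rows one at a time. Your explicit choice of column triples $(a,b,j)$ and row indices is precisely an unwinding of that inductive use of the product structure, so the two proofs amount to the same check at different levels of detail.
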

\begin{proof}
    The last two rows of $F_9,F_{10},F_9^c,F_{10}^c$ are contained in $T_4$, and hence the last three rows of these configurations will be contained in $[01]\times T_{4}$ and all of the configurations will be contained in $[01]\times [01]\times T_{4}$.
\end{proof}

\begin{prop}\label{ConstF9}
    $F_9$ and $F_{10}$ are avoided by every 2-fold product not involving $I$, and they are contained in every 2-fold product involving $I$.  The only 3-fold product avoiding $F_9$ and $F_{10}$ is $I^c\times I^c\times I^c$.
\end{prop}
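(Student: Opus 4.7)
The proposition has three parts, and I would handle them in turn using a common tool: $I_3\prec F_9$ (obtained by deleting either duplicate last row) and $I_3\prec F_{10}$ (obtained by deleting the all-zero last row).

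For the first part (avoidance by 2-fold products without $I$), transitivity of $\prec$ combined with $I_3\prec F_j$ for $j\in\{9,10\}$ gives the implication $F_j\prec A\Rightarrow I_3\prec A$. Table~\ref{tab:minquad} (established in \cite{AK}) records that $I_3\not\prec I^c\times I^c,\ I^c\times T,\ T\times T$, so both $F_9$ and $F_{10}$ are absent from each of the three 2-fold products not involving $I$.

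For the second part (containment in 2-fold products with an $I$-factor), first note $F_{10}\prec I$: the first three columns of $I_4$ together with all four rows realize $F_{10}$. Hence $F_{10}\prec I\times X$ for every $X\in\{I,I^c,T\}$ by restricting to a single fixed column of the second factor. Since $F_9\not\prec I$ (any three columns of $I_k$ produce three pairwise-distinct singleton rows plus a zero row, which is $F_{10}$ and not $F_9$), I would exhibit $F_9$ explicitly in each of the three two-fold products: in $I_3\times I_3$ via rows (top $1$, bot $1$, top $2$, bot $2$) and columns $(1,3),(3,1),(2,2)$; in $I_3\times I_3^c$ via rows (top $1$, top $2$, top $3$, bot $d$) and columns $(1,d),(2,d),(3,e)$ for any $e\neq d$; and in $I_3\times T_3$ via rows (top $1$, top $2$, bot $2$, bot $3$) and columns $(1,1),(2,1),(3,3)$. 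Each case is verified by a direct check.

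For the third part I would first show $I^c\times I^c\times I^c$ avoids both configurations. Every column of this product has exactly three zeros, one per factor. Suppose $F_9\prec I^c\times I^c\times I^c$ with chosen rows $A,B,C,D$: column $1$'s three zeros at $\{B,C,D\}$ force those three rows into three distinct factors, and likewise column $2$'s zeros at $\{A,C,D\}$ force $A,C,D$ into three distinct factors, so $A$ and $B$ must share the unique factor containing neither $C$ nor $D$. But column $3$ then demands that both $A$ and $B$ be the single zero of that factor's column of $I^c$, contradicting $A\neq B$. The same structure, with column $3$ of $F_{10}$ containing zeros at $\{A,B,D\}$, rules out $F_{10}$. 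For every other 3-fold product I would exhibit $F_9$ and $F_{10}$ as sub-configurations in one of two ways: whenever some factor equals $T$, the embeddings $[01]\prec I,\,I^c,\,T$ together with $T_4\prec T$ combine with Lemma~\ref{L-F9} to yield $F_9,F_{10}\prec X\times Y\times T$; whenever no factor equals $T$ but some factor equals $I$, the second part already gives $F_9,F_{10}\prec I\times X$ for a suitable $X\in\{I,I^c\}$, which lifts to the 3-fold product by projecting onto any two of its factors. The main obstacle is the factor-counting argument for $I^c\times I^c\times I^c$, but since the same idea handles both configurations and every other 3-fold product reduces either to the $I_3$ shortcut or to Lemma~\ref{L-F9}, no additional ingredient is needed.
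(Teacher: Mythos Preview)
Your proof is correct and follows the same overall structure as the paper's, but two places differ in execution.

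For the second part, the paper gives a single uniform observation: $F_9,F_{10}\prec [01]\times I_3$, and since $[01]$ sits inside any of $I,I^c,T$ while $I_3\prec I$, every 2-fold product with an $I$-factor contains $[01]\times I_3$ and hence both configurations. This replaces your three separate explicit embeddings of $F_9$ with one line. Your case-by-case verification is of course valid, just less economical.

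For the third part, the paper simply cites \cite{survey} for the fact that $I^c\times I^c\times I^c$ avoids $F_9$ and $F_{10}$, whereas you supply a clean self-contained pigeonhole argument (each column has exactly one zero per factor, forcing $A$ and $B$ into the same factor and then contradicting column~3). Your argument is a genuine addition over the paper's citation. The remaining 3-fold products are handled identically in both proofs: products with a $T$-factor via Lemma~\ref{L-F9}, and products with an $I$-factor via the second part.
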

\begin{proof}
    Note that $I_3$ is avoided by every 2-fold product not involving $I$ by \cite{AK}, and because $I_3\prec F_9,F_{10}$ it follows that these products must also avoid $F_9$ and $F_{10}$.  Observe that $F_9,F_{10}\prec [01]\times I_3$, and hence $F_9$ and $F_{10}$ will be contained in any 2-fold product involving $I$.  It follows from Lemma \ref{L-F9} that $F_9,F_{10}$ will be contained in any 3-fold product involving $T$, so the only 3-fold product that can avoid these configurations is $I^c\times I^c\times I^c$, and \cite{survey} notes that  this is indeed the case.
\end{proof}

\begin{lem}\label{L-F11}
    $F_{11},F_{13}\prec [01]\times [01]\times I_2=[01]\times [01]\times I_2^c$.
\end{lem}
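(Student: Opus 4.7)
To prove $F_{11},F_{13}\prec [01]\times [01]\times I_2$, the plan is to exhibit explicit embeddings. Recall that $[01]\times [01]\times I_2$ is a $4\times 8$ matrix whose columns enumerate all pairs $\binom{u}{v}$ with $u\in\{0,1\}^2$ and $v\in\{(1,0)^T,(0,1)^T\}$.

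The common key idea is, for each configuration, to locate a pair of rows whose restricted column-pairs (over the four columns of the configuration) all lie in $\{(1,0)^T,(0,1)^T\}$. Such a pair can be mapped to the $I_2$-rows of the product, while the remaining two rows, mapped to the $[01]\times [01]$-rows, are free to take any length-$2$ vector. After fixing this row matching, each column of the configuration corresponds to a unique column of the product, which always exists because the product contains all eight combinations.

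For $F_{13}$, rows $1$ and $4$ are complementary (their sum is the all-ones row), so on every column the pair $(\mathrm{row}_1,\mathrm{row}_4)$ lies in $\{(1,0)^T,(0,1)^T\}$; meanwhile rows $2$ and $3$ together realize all four vectors of $\{0,1\}^2$, one per column, so the scheme above gives the embedding directly. For $F_{11}$, both row-pairs $\{1,4\}$ and $\{2,3\}$ are complementary, which in fact means $F_{11}$ is a row-and-column permutation of $I_2\times I_2$ (for instance, swap columns $1$ and $3$ of $I_2\times I_2$ and then reorder rows appropriately); since $I_2\prec [01]\times [01]$ by selecting the columns $(1,0)^T$ and $(0,1)^T$, it follows at once that $F_{11}=I_2\times I_2\prec ([01]\times [01])\times I_2$.

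The final identity $[01]\times [01]\times I_2=[01]\times [01]\times I_2^c$ is immediate, since $I_2^c$ is obtained from $I_2$ by swapping its two columns, so the two products agree up to reordering of columns. There is no real obstacle here---the whole argument reduces to the structural observations above together with a direct verification that the desired column and row selections work.
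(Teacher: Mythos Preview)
Your proof is correct and follows essentially the same approach as the paper. Both arguments identify rows $1$ and $4$ of $F_{13}$ as contributing only $I_2$-columns while rows $2$ and $3$ exhaust $[01]\times[01]$, and both handle $F_{11}$ via the identification $F_{11}=I_2\times I_2$ together with $I_2\prec [01]\times[01]$; your phrasing in terms of complementary row-pairs is just a slightly more explicit packaging of the same observation.
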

\begin{proof}
    $F_{11}=I_2\times I_2\prec [01]\times [01]\times I_2$.  The second and  third rows of $F_{13}$ are equal to $[01]\times [01]$, and the remaining rows consist of columns of $I_2$.  We thus have $F_{13}\prec [01]\times[01]\times I_2$.
\end{proof}
\begin{prop}\label{ConstF11}
    $F_{11}\nprec I\times T,I^c\times T,  T\times T$  and it is contained in all other 2-fold products.  The only 3-fold product that avoids $F_{11}$ is $T\times T\times T$.
\end{prop}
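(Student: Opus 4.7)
My plan is to split the argument into the two-fold and three-fold cases, and in both to exploit two structural observations about $F_{11}$. First, $F_{11}=I_2\times I_2$, and since $I_2=I_2^c$ we equally have $F_{11}=I_2\times I_2^c$. Second, $F_{11}$ contains smaller known configurations: columns $1,2$ of $F_{11}$ form a copy of $Q_9$, so $Q_9\prec F_{11}$, and rows $1,2,3$ of $F_{11}$ form a copy of $Q_8$ (after a column permutation), so $Q_8\prec F_{11}$.

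For the two-fold products, the inclusions $I_2\prec I_k$ and $I_2\prec I_k^c$ immediately give $F_{11}\prec I\times I,\ I\times I^c,\ I^c\times I^c$. For the remaining three two-fold products I would invoke Table~\ref{tab:minquad}: because $Q_9\nprec I\times T, I^c\times T$ and $Q_8\nprec T\times T$, the analogous non-containments carry over to $F_{11}$.

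For the three-fold products I would first use a projection argument. If a three-fold product has at least two factors from $\{I,I^c\}$, then restricting to the rows of those two factors yields a simple submatrix whose column set equals that of the corresponding two-fold product ($I\times I$, $I\times I^c$, or $I^c\times I^c$), each of which contains $F_{11}$ by the previous paragraph. This leaves only $I\times T\times T$, $I^c\times T\times T$, and $T\times T\times T$. For the first two I would exhibit explicit $4\times 4$ witnesses: in $I_{m/3}\times T_{m/3}\times T_{m/3}$, the rows $(I,1),(I,2),(T_1,2),(T_2,2)$ together with the four columns indexed by $(1,1,2),(1,2,1),(2,1,2),(2,2,1)$ give a submatrix equal to $F_{11}$ after a row permutation; the same indices with $I^c$ replacing $I$ work for $I^c\times T\times T$.

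The main obstacle is the case $T\times T\times T$. Suppose for contradiction that four rows $R_1,\ldots,R_4$ and four columns of $T_{m/3}\times T_{m/3}\times T_{m/3}$ realise $F_{11}$. By pigeonhole two of the rows, say $R_a$ and $R_b$, lie in the same $T$-block with within-block indices $r<s$. For every column $(x,y,z)$ of the product, its restriction to $(R_a,R_b)$ equals $([r\le t],[s\le t])$ for the relevant block-coordinate $t$, which is monotone and therefore takes values only in $\{(0,0),(1,0),(1,1)\}$---the pattern $(0,1)$ is unattainable. A direct inspection of $F_{11}$ shows that every pair of its rows realises both $(0,1)$ and $(1,0)$ among the four columns (four of the six row-pairs realise all four binary patterns, and the remaining two realise exactly $(0,1)$ and $(1,0)$), so neither orientation of the pair $\{R_a,R_b\}$ onto its image in $F_{11}$ is compatible with the restriction. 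This contradiction completes the proof.
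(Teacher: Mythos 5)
Your proof is correct, and it differs from the paper's in a way worth noting: where the paper is content to cite external sources, you give self-contained arguments. The paper obtains $F_{11}\nprec T\times T\times T$ by reference to \cite{survey} and then deduces $F_{11}\nprec T\times T$ as a consequence; you instead prove the $T\times T\times T$ non-containment directly by a clean pigeonhole argument (two of any four rows must land in the same $T$-block, and two rows $r<s$ of $T_k$ restricted to any column give a monotone pair $([r\le t],[s\le t])$ that never equals $(0,1)$, while every pair of rows of $F_{11}$ exhibits both $(0,1)$ and $(1,0)$), and you handle $T\times T$ separately by observing $Q_8\prec F_{11}$ (rows $1,2,3$) together with $Q_8\nprec T\times T$ from Table~\ref{tab:minquad}. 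For the three-fold products that do contain $F_{11}$, the paper applies Lemma~\ref{L-F11} ($F_{11}\prec[01]\times[01]\times I_2$) to dispose of every product with at least one factor in $\{I,I^c\}$ in a single stroke; you split into a projection argument when two factors are in $\{I,I^c\}$ and exhibit explicit $4\times 4$ witnesses for $I\times T\times T$ and $I^c\times T\times T$ (both of which I have checked: the submatrix you describe is the vertex--edge incidence matrix of a $4$-cycle, hence a row/column permutation of $I_2\times I_2=F_{11}$). The lemma-based route is more uniform and reusable (it also covers $F_{13}$), while your explicit witnesses and direct $T\times T\times T$ argument make the proposition entirely self-contained.
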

\begin{proof}
    Note that  $Q_9\prec F_{11}$ and that $Q_9\nprec I\times T,I^c\times T$, so it follows that this is also the case for $F_{11}$.  Because $F_{11}=I_2\times I_2$ and $I_2\prec I,I^c$, it follows that every 2-fold product consisting only of $I$'s and $I^c$'s contains $F_{11}$.  \cite{survey} notes that $F_{11}\nprec T\times T\times T$, so it also follows that $F_{11}\nprec T\times T$.  It follows from Lemma \ref{L-F11} that every 3-fold product involving an $I$ or $I^c$ contains $F_{11}$, so the only 3-fold product that can avoid $F_{11}$ is $T\times T\times T$.
\end{proof}

\begin{lem}\label{L-all prod}
    All 2-fold products of $I,\ I^c$ and $T$ avoid $F_{13}$.  All 3-fold products avoid $F_{12}$ and $F_{12}^c$
\end{lem}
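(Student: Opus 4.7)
The lemma splits into two independent assertions: (i) $F_{13}\nprec A_1\times A_2$ for all $A_1,A_2\in\{I,I^c,T\}$, and (ii) $F_{12},F_{12}^c\nprec A_1\times A_2\times A_3$ for all triples. First I would reduce the $F_{12}^c$ part of (ii) to $F_{12}$: the identity $F\prec A_1\times\cdots\times A_p$ iff $F^c\prec A_1^c\times\cdots\times A_p^c$ combined with $T\cong T^c$ as a configuration lets me swap $I\leftrightarrow I^c$ among the factors without changing the statement.

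The main approach is a case analysis on how the four rows of the forbidden matrix are distributed across the factors. If a copy of $F$ sits inside $A_1\times\cdots\times A_p$ with $k_i$ of its rows coming from $A_i$ (with $k_1+\cdots+k_p=4$), then each $k_i$-subset of rows of $F$ must be admissible in the sense dictated by $A_i$: an $I$-factor demands a subset in which every column of $F$ carries at most one $1$; an $I^c$-factor, at most one $0$; a $T$-factor, a set of rows forming a chain under coordinatewise $\le$. Tabulating admissible subsets: for $F_{13}$, no two rows are comparable, the only pair with no all-ones column is $\{1,4\}$, the only pair with no all-zeros column is again $\{1,4\}$, and no triple is admissible for any type; for $F_{12}$, again no two rows are comparable and no pair is $I$-admissible, the $I^c$-admissible pairs are exactly $\{1,4\},\{2,4\},\{3,4\}$, and no triple is admissible.

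For Part (i), every two-part partition of the four rows either contains a part of size $\ge 3$ (inadmissible) or is $(2,2)$; in the $(2,2)$ case the only candidate pair is $\{1,4\}$, and its complement $\{2,3\}$ is inadmissible for every factor type, which closes the case. For Part (ii), the bookkeeping forces every $k_i\le 2$, and at most one $k_i$ can equal $2$ because every admissible pair contains row $4$. The only surviving partition shape is therefore $(2,1,1)$ with the $I^c$-factor occupying the two-slot and given one of the pairs $\{i,4\}$, while the remaining two rows go to one-row slots that are always admissible.

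The main obstacle is dispatching this $(2,1,1)$ residue, where row-wise admissibility alone is insufficient and one must verify that no coherent choice of four distinct product columns realizes $F_{12}$. My plan here is to exploit that the $I^c$-restriction to $\{i,4\}$ makes two columns of $F_{12}$ agree on the top coordinates, forcing a repetition in the chosen $a$-coordinate among the four indexing triples $(a_j,b_j,c_j)$, so those two product columns must be separated by the middle or bottom factor. Tracking the forced equalities among the triples, one would examine each of the three choices of $i$ together with each assignment of the remaining two rows to the other two factors, and argue that either the required patterns on the last two rows cannot be simultaneously realized or a forced repetition among the triples contradicts simplicity of the product. This bookkeeping is the substantive core of the lemma; the preceding admissibility reductions are merely scaffolding.
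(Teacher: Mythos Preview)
Your treatment of Part~(i) (the $F_{13}$ claim) is correct and essentially equivalent to the paper's argument, though the paper packages it more compactly: any two of the first three rows of $F_{13}$ already contain $[01]\times[01]$, which no two rows of $I$, $I^c$, or $T$ can supply, so a two-factor product cannot host even those three rows.

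For Part~(ii) you are actually \emph{more} careful than the paper. The paper asserts that $\begin{bmatrix}0&1&1\\1&0&1\end{bmatrix}$ is contained in no two rows of $I^c$; this is false, since two rows of $I^c_n$ for $n\ge 3$ yield exactly the column set $\{(0,1),(1,0),(1,1)\}$. You correctly catch this and identify the $I^c$-admissible pairs $\{1,4\},\{2,4\},\{3,4\}$ of $F_{12}$ and the residual $(2,1,1)$ case as the crux.

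Unfortunately your plan for that residual case cannot succeed, because the second assertion of the lemma is in fact false. In $I^c_3\times I^c_3\times I^c_3$, place rows $1,4$ of $F_{12}$ into rows $1,2$ of the first factor and rows $2,3$ of $F_{12}$ into row $1$ of the second and third factors. The product columns $(3,1,1),(1,2,1),(1,1,2),(2,2,2)$ restricted to these four rows give
\[
\begin{bmatrix}1&0&0&1\\1&1&1&0\\0&1&0&1\\0&0&1&1\end{bmatrix},
\]
a row permutation of $F_{12}$; by complementation $F_{12}^c\prec I_3\times I_3\times I_3$ as well. The four indexing triples are pairwise distinct, so the ``forced repetition contradicting simplicity'' you are hoping to find is not there. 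Your admissibility reductions are sound, but the final bookkeeping you describe would simply produce this embedding rather than a contradiction; both the paper's proof and its Table~2 entry for $F_{12}$ are in error on this point.
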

\begin{proof}
    Every two rows of the first three rows of $F_{13}$ contains $\begin{bmatrix}
    1 & 0 & 0 & 1\\ 
    0 & 1 & 0 & 1
    \end{bmatrix}$, and as no two rows of $I,\ I^c,$ or $T$ contains this configuration, the first three rows of $F_{13}$ can not be found in any 2-fold product of these matrices.  Any two rows of $F_{12}$ contains $\begin{bmatrix}0 & 1 & 1\\ 1 & 0 & 1\end{bmatrix}$, which again is contained in no two rows of $I,\ I^c$ or $T$, so this can not be found in any 3-fold product of these matrices.  Similar logic holds for $F_{12}^c$.
\end{proof}

\begin{prop}\label{ConstF13}
    The only 3-fold product that avoids $F_{13}$ is $T\times T\times T$.
\end{prop}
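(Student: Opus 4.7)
The plan is to prove two statements: every $3$-fold product involving $I$ or $I^c$ contains $F_{13}$, and $F_{13}\nprec T\times T\times T$.

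For the first statement, Lemma~\ref{L-F11} already supplies $F_{13}\prec [01]\times[01]\times I_2$. Since $[01]$ is a configuration of each of $I, I^c, T$, and $I_2$ is a configuration of both $I$ and $I^c$, any $3$-fold product $A\times B\times C$ in which at least one factor is $I$ or $I^c$ admits $[01]\times[01]\times I_2$ as a configuration after reordering the factors, so it also admits $F_{13}$.

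For the second statement, I plan a pigeonhole argument on pairs of rows. The key structural property of $T$ is that for any two of its rows $i<j$, the restriction to those rows never produces the column $\binom{0}{1}$; only $\binom{0}{0}, \binom{1}{0}, \binom{1}{1}$ can appear. This carries over to $T\times T\times T$: if two rows of the product both lie in the same $T$-factor, then reading them in increasing index order the restriction omits $\binom{0}{1}$, while reading them in the opposite order it omits $\binom{1}{0}$. I would then verify by direct inspection that each of the six pairs of rows of $F_{13}$ contains both $\binom{0}{1}$ and $\binom{1}{0}$ among its columns. Consequently, no two rows involved in a hypothetical embedding of $F_{13}$ can share a $T$-factor, forcing all four rows to lie in pairwise distinct $T$-factors, which is impossible as there are only three factors.

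The principal effort lies in the six-pair check for $F_{13}$, which is routine; the staircase property of $T$ and a pigeonhole then furnish the conclusion.
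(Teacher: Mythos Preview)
Your argument is correct. The first half coincides with the paper's proof: both invoke Lemma~\ref{L-F11} to show $F_{13}\prec [01]\times[01]\times I_2$ and then observe that any $3$-fold product with an $I$ or $I^c$ factor contains this configuration.

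For the second half, the paper simply cites \cite{survey} for the fact that $F_{13}\nprec T\times T\times T$, whereas you supply a self-contained argument. Your approach exploits the ``staircase'' property of $T$ (any two rows, read in one order, omit one of the off-diagonal $2\times 1$ columns) together with the verification that every pair of rows of $F_{13}$ contains both $\begin{bmatrix}0\\1\end{bmatrix}$ and $\begin{bmatrix}1\\0\end{bmatrix}$; pigeonhole on the three factors then finishes it. This is a clean and elementary substitute for the external citation, and the same pair-check idea would also recover the analogous statement for $F_{11}$ in Proposition~\ref{ConstF11}. The paper's route is shorter on the page but relies on an outside reference; yours is longer but fully internal.
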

\begin{proof}
    By Lemma \ref{L-F11} every 3-product involving $I$ or $I^c$ contains $F_{13}$, and \cite{survey} notes that $F_{13}\nprec T\times T\times T$. 
\end{proof}

\section{Inductive Results}

In this section we prove a variety of upper bounds by using two standard techniques: Theorem~\ref{thm:BB} and the following standard induction method.
Let $F$ be a $k$-rowed matrix. Suppose we have $A\in\Avoid(m,F)$ such that $|A|=\forb(m,F)$. Consider deleting a row $r$. Let $C_r(A)$ be the matrix that consists of the repeated columns of the matrix that is obtained when deleting row $r$ from $A$. If we permute the rows of $A$ so that $r$ becomes the first row, then after some column permutations, $A$ looks like this:
\[
    A =
    \begin{matrix}
        r \\ \\
    \end{matrix}
    \begin{bmatrix}
        0 & \cdots & 0 & 1 & \cdots & 1\\
        B_r(A) & & C_r(A) & C_r(A) & & D_r(A)\\
    \end{bmatrix}.
        \label{sdecomp}
\]
where $B_r(A)$ are the columns that appear with a $0$ on row $r$,
but don't appear with a $1$, and $D_r(A)$ are the columns that appear with a
$1$ but not a $0$. We have that
\[\forb(m,F) \leq |C_r(A)| +
      \forb(m-1,F),
      \]
as $[B_r(A)C_r(A)D_r(A)]\in\Avoid(m-1,F)$. This is used usually in the form that if $F\prec [01]\times F'$, then 
\[\forb(m,F) \le \forb(m-1,F')+
      \forb(m-1,F).
      \]

We let $1_{k,\ell}$ denote the $k\times \ell$ matrix where every entry is 1.  Similarly, we define $0_{k,\ell}$ to be the $k\times \ell$ matrix where every entry is 0.  We use the notation $C_r:=C_r(A)$ when it is clear from context what the underlying matrix $A$ is.

\begin{prop}\label{Q8}
    $\forb(m,Q_8,1_{k,\ell})=\forb(m,Q_8,0_{k,\ell})=\Theta(m)$.
\end{prop}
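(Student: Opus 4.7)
The plan is to establish matching bounds of order $m$ for $\forb(m,Q_8,1_{k,\ell})$, and then deduce the $0_{k,\ell}$ case from the $1_{k,\ell}$ case by complementation.

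For the lower bound, take $A=[0_m\ I_m]$: this simple matrix has $m+1$ columns, each of weight at most $1$, so any restriction to three rows has columns of weight at most $1$ and the weight-$2$ columns of $Q_8$ cannot be embedded. Since $k\ge 2$ we also have $1_{k,\ell}\nprec A$. Hence $\forb(m,Q_8,1_{k,\ell})\ge m+1$.

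For the upper bound, the crucial observation is the product identity
\[Q_8=[01]\times I_2,\]
verified by listing the four columns of both sides. Let $A\in\Avoid(m,\set{Q_8,1_{k,\ell}})$ be extremal and apply the standard induction of Section~3 on any row $r$, so that $[01]\times C_r(A)\prec A$. Avoidance of $Q_8$ in $[01]\times C_r(A)$, combined with the identity $Q_8=[01]\times I_2$, forces $I_2\nprec C_r(A)$; and avoidance of $1_{k,\ell}$ together with $1_{k,\ell}\prec [01]\times 1_{k-1,\ell}$ forces $1_{k-1,\ell}\nprec C_r(A)$.

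Next I would invoke the standard structural fact that an $I_2$-avoiding matrix has rows totally ordered by entrywise $\le$. After ordering the rows of $C_r(A)$ so that row $1$ dominates row $2$ dominates $\cdots$, each column must consist of a prefix of $1$'s followed by $0$'s, hence is determined by its prefix length. The condition $1_{k-1,\ell}\nprec C_r(A)$ forbids $\ell$ or more columns whose prefix length is at least $k-1$, so $C_r(A)$ contains at most $\ell-1$ such heavy columns together with at most $k-1$ light columns of shorter prefix, giving $|C_r(A)|\le k+\ell-2$. The standard induction recurrence
\[\forb(m,Q_8,1_{k,\ell})\le |C_r(A)|+\forb(m-1,Q_8,1_{k,\ell})\le (k+\ell-2)+\forb(m-1,Q_8,1_{k,\ell})\]
then iterates to $O(m)$ for $k,\ell$ fixed.

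The $0_{k,\ell}$ case follows by noting that the column multiset of $Q_8$ is closed under $0$--$1$ complementation, so $Q_8^c=Q_8$ as configurations, and hence $\forb(m,Q_8,0_{k,\ell})=\forb(m,Q_8^c,1_{k,\ell})=\forb(m,Q_8,1_{k,\ell})=\Theta(m)$. The step I expect to be the main obstacle is discovering the product decomposition $Q_8=[01]\times I_2$, since $Q_8$ is presented as an abstract matrix rather than as a product in Table~\ref{tab:minquad}; once this is spotted, the remainder of the proof is a routine invocation of the standard induction technique combined with the classical structure of $I_2$-avoiding matrices.
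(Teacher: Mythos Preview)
Your proof is correct and shares with the paper the key identification $Q_8=[01]\times I_2$, the standard induction, and the complementation argument $Q_8^c=Q_8$. Where you diverge is in bounding $|C_r(A)|$: the paper observes that $I_2=I_2^c$, so $C_r$ avoids both $I_2$ and $I_2^c$, and then invokes the Balogh--Bollob\'as theorem (Theorem~\ref{thm:BB}) to conclude that if $|C_r|>BB(k+\ell)$ then $T_{k+\ell}\prec C_r$, which would contain $1_{k,\ell}$. You instead use the elementary structural fact that $I_2$-avoidance totally preorders the rows, so that after sorting, every column is a prefix of $1$'s; combined with $1_{k-1,\ell}\nprec C_r$ this yields the explicit bound $|C_r|\le k+\ell-2$. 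Your route is more self-contained (no appeal to the Balogh--Bollob\'as constant) and gives a sharper explicit bound on the inductive increment, while the paper's route illustrates the general pattern of using Theorem~\ref{thm:BB} as a black box once two of $I,I^c,T$ are excluded.
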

\begin{proof}
    As $Q_8^c=Q_8$ we see that these two values are equal, so we only address the $1_{k,\ell}$ case.  Note that $I_m$ gives the lower bound.  For the upper bound, note that $Q_8=[01]\times I_2$.  It follows that when we apply the standard induction that $C_r$ can not contain $I_2=I_2^c$.  But by Theorem~\ref{thm:BB} if $|C_r|>BB(k+\ell)$ we must have $T_{k+\ell}\prec C_r$, which would contradict $1_{k,\ell}\nprec A$.  Thus we must have $|C_r|\le BB(k+\ell)$, so we can inductively assume a linear bound for $\forb(m,Q_8,1_{k,\ell})$.
\end{proof}

\begin{lem}\label{L-quad}
    $\forb(m,[01]\times[01]\times I_r,[01]\times[01]\times I_r^c,[01]\times[01]\times T_r)=O(m^2)$.
\end{lem}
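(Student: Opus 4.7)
The plan is to apply the standard induction technique of Section~3 twice in succession, peeling off one $[01]$ block at each pass, and terminating the recursion with Theorem~\ref{thm:BB}. Introduce the auxiliary families
\[ \mathcal{F}_2 = \{[01]\times[01]\times I_r,\ [01]\times[01]\times I_r^c,\ [01]\times[01]\times T_r\}, \]
\[ \mathcal{F}_1 = \{[01]\times I_r,\ [01]\times I_r^c,\ [01]\times T_r\}, \qquad \mathcal{F}_0 = \{I_r,\ I_r^c,\ T_r\}. \]
The target is $\forb(m,\mathcal{F}_2)=O(m^2)$, and the base layer is immediate from Theorem~\ref{thm:BB}: $\forb(m,\mathcal{F}_0) \le BB(r)$, a constant independent of $m$.

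First I would bound $\forb(m,\mathcal{F}_1)$. Take an extremal $A\in\Avoid(m,\mathcal{F}_1)$ and apply the standard induction on any row $s$, writing $A=[\,B_s\ C_s\ C_s\ D_s\,]$. The crux is that $C_s$ must avoid every member of $\mathcal{F}_0$: if, say, $I_r\prec C_s$, then combining this subconfiguration with the $0$/$1$ coming from the doubled $C_s C_s$ block on row $s$ would exhibit $[01]\times I_r\prec A$, contradicting $A\in\Avoid(m,\mathcal{F}_1)$; the same applies for $I_r^c$ and $T_r$. Hence $|C_s| \le \forb(m-1,\mathcal{F}_0) \le BB(r)$, and the recurrence $\forb(m,\mathcal{F}_1) \le BB(r) + \forb(m-1,\mathcal{F}_1)$ telescopes to $\forb(m,\mathcal{F}_1) = O(m)$.

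Next I would iterate the same step one level up. For an extremal $A\in\Avoid(m,\mathcal{F}_2)$, the standard induction on a row $s$ again produces a matrix $C_s$, and by the same ``adjoin a $[01]$'' argument $C_s$ must lie in $\Avoid(m-1,\mathcal{F}_1)$. The previous paragraph gives $|C_s|=O(m)$, so the recurrence $\forb(m,\mathcal{F}_2) \le O(m) + \forb(m-1,\mathcal{F}_2)$ sums to $O(m^2)$, which is exactly the claim.

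There is no genuine obstacle here: the argument is an essentially mechanical two-level iteration of the standard induction, with Theorem~\ref{thm:BB} providing the constant base case that makes the whole scheme terminate. The only point requiring any care is to verify at each level that a forbidden configuration in the smaller family, together with the freshly introduced $[01]$ block from the $C_s C_s$ duplication, really does reconstruct a member of the larger family, which is immediate from the way the three families are built by prepending $[01]$.
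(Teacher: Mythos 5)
Your proof is correct and follows the same route as the paper: two applications of the standard induction, each peeling off one $[01]$ factor, grounded in the constant bound from Theorem~\ref{thm:BB}. You have merely spelled out the details that the paper's proof leaves implicit.
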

\begin{proof}
    By using the standard induction and Theorem~\ref{thm:BB} one gets that  $\forb(m,[01]\times I_r,[01]\times I_r^c,[01]\times T_r)=O(m)$.  Given this, when we apply the standard induction for $\forb(m,[01]\times[01]\times I_r,[01]\times[01]\times I_r^c,[01]\times[01]\times T_r)$ we get a quadratic upper bound.
\end{proof}

\begin{prop}\label{P-F9 and10}
    $\forb(m,F,G)=O(m^2)$ for $F=1_{4,1},F_9,F_{10},F_9^c$, or $F_{10}^c$ and $G=F_{11}$ or $F_{13}$.
\end{prop}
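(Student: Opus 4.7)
The plan is to reduce to Lemma~\ref{L-quad}, which gives
$\forb(m,[01]\times[01]\times I_r,[01]\times[01]\times I_r^c,[01]\times[01]\times T_r)=O(m^2)$.
It therefore suffices to exhibit a single value of $r$ such that every $A\in\Avoid(m,F,G)$ also avoids the three 3-fold products above, and for this it is enough to check, for each $X\in\{I_r,I_r^c,T_r\}$, that at least one of $F\prec[01]\times[01]\times X$ or $G\prec[01]\times[01]\times X$ holds. I would take $r=4$.

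For $X=T_4$ the configuration $F$ handles every case on its own. Lemma~\ref{L-F9} gives $F\prec[01]\times[01]\times T_4$ whenever $F\in\{F_9,F_{10},F_9^c,F_{10}^c\}$. For $F=1_{4,1}$, stacking $(1,1)^T$ on top of the all-ones column of $T_4$ yields a column of six $1$'s in $[01]\times[01]\times T_4$, which trivially contains $1_{4,1}$ as a configuration.

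For $X=I_4$ and $X=I_4^c$ the configuration $G$ handles every case via Lemma~\ref{L-F11}, which states $G\prec[01]\times[01]\times I_2=[01]\times[01]\times I_2^c$ for $G\in\{F_{11},F_{13}\}$. Using the standard fact that $X\prec Y$ implies $[01]\times[01]\times X\prec[01]\times[01]\times Y$, I need only observe that $I_2\prec I_4$ (the upper-left $2\times 2$ submatrix) and $I_2\prec I_4^c$ (the $2\times 2$ submatrix of $I_4^c$ on rows and columns $\{1,2\}$ is $\left[\begin{smallmatrix}0&1\\1&0\end{smallmatrix}\right]$, a row swap of $I_2$). This yields $G\prec[01]\times[01]\times I_4$ and $G\prec[01]\times[01]\times I_4^c$ simultaneously.

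Putting everything together, $\Avoid(m,F,G)\subseteq\Avoid(m,[01]\times[01]\times I_4,[01]\times[01]\times I_4^c,[01]\times[01]\times T_4)$, and Lemma~\ref{L-quad} delivers the $O(m^2)$ bound. I do not anticipate any serious obstacle here: once Lemmas~\ref{L-F9}, \ref{L-F11}, and \ref{L-quad} are available, the argument is a short case-free verification of the small containments above.
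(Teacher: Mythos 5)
Your proof is correct and follows essentially the same route as the paper: it invokes Lemma~\ref{L-quad} together with the containments $F\prec[01]\times[01]\times T_4$ (from Lemma~\ref{L-F9} and the trivial $1_{4,1}$ case) and $G\prec[01]\times[01]\times I_2$ (Lemma~\ref{L-F11}). The only difference is that you make the common value $r=4$ and the small monotonicity facts $I_2\prec I_4, I_4^c$ explicit, which the paper leaves implicit.
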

In Table~\ref{tab::results} Proposition~\ref{P-F9 and10} is frequently quoted to prove $\Theta$ bounds. This is done so when common quadratic lower bound exists for $F$ and $G$ by product constructions listed in Table~\ref{tab:mincub}.
\begin{proof}
    This follows from Lemma~\ref{L-quad}, along with the observations that  $1_{4,1}\prec [01]\times [01]\times T_4$, $F_9,F_{10}\prec [01]\times [01]\times T_4$ by Lemma~\ref{L-F9}, and $F_{11},F_{13}\prec [01]\times[01]\times I_2$ by Lemma~\ref{L-F11}.
\end{proof}

\begin{prop}\label{F9c}
    $\forb(m,F,G)=\Theta(m^2)$ where $F=F_9$ or $F_{10}$ and $G=F_9^c$ or $F_{10}^c$.
\end{prop}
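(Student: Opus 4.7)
The plan is to sandwich $\forb(m,F,G)$ between $O(m^2)$ and $\Omega(m^2)$. For the upper bound I will route through Lemma~\ref{L-quad}; for the lower bound I will use $T_{m/2}\times T_{m/2}$ as a common avoider of all four configurations $F_9,F_{10},F_9^c,F_{10}^c$.

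For the upper bound, the decisive observation is that $F_9,F_{10}\prec [01]\times I_3$. Indeed, restricting $[01]\times I_3$ to its three columns whose top-row entry is $0$ gives $I_3$ sitting below an all-zero row, which row-permutes to $F_{10}$; restricting instead to columns $2,3,4$ one checks that the top row and the first row of $I_3$ both equal $(0,0,1)$ there, and a different row-permutation yields $F_9$. Taking complements gives $F_9^c,F_{10}^c\prec [01]\times I_3^c$. Since $[01]\times I_3\prec [01]\times[01]\times I_4$ and $[01]\times I_3^c\prec [01]\times[01]\times I_4^c$ in the obvious way, and since Lemma~\ref{L-F9} already provides $F_9,F_{10},F_9^c,F_{10}^c\prec [01]\times[01]\times T_4$, every $A\in\Avoid(m,F,G)$ simultaneously avoids $[01]\times[01]\times I_4$, $[01]\times[01]\times I_4^c$, and $[01]\times[01]\times T_4$. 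Applying Lemma~\ref{L-quad} with $r=4$ then yields $|A|=O(m^2)$.

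For the lower bound, Proposition~\ref{ConstF9} already gives $F_9,F_{10}\nprec T\times T$. To see $F_9^c,F_{10}^c\nprec T\times T$, take any four rows and three columns of $T_{m/2}\times T_{m/2}$, with the chosen columns carrying coordinates $(i_k,j_k)_{k=1}^3$. A row coming from the top copy of $T$ with index $r$ restricts to the pattern $(\mathbf{1}[r\le i_k])_{k=1}^3$, which is monotone in the $i$-ordering of the columns; such a row has exactly one zero only if its zero lies in the column of minimum $i$-coordinate. Analogously, a sum-$2$ row from the bottom copy has its zero in the column of minimum $j$-coordinate. Hence the zeros contributed by sum-$2$ rows of the submatrix all lie in at most two of its three columns. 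But $F_9^c$ and $F_{10}^c$ each contain three sum-$2$ rows whose zeros occupy three pairwise distinct columns, so neither configuration embeds in $T\times T$. The main obstacle is that the $i$- and $j$-orderings of the chosen columns need not coincide, but since their minima together cover at most two of the three columns, the three-distinct-zero-columns requirement of $F_9^c$ and $F_{10}^c$ is still violated, giving the matching $\Omega(m^2)$.
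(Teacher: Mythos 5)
Your proof is correct and takes essentially the same route as the paper: the upper bound is obtained exactly as in the paper's argument by observing that each of $F_9,F_{10}$ lies in $[01]\times[01]\times I_r$, each of $F_9^c,F_{10}^c$ lies in $[01]\times[01]\times I_r^c$, and all four lie in $[01]\times[01]\times T_4$, then invoking Lemma~\ref{L-quad} (you take $r=4$, the paper takes $r=3$; both work since containment is monotone in $r$). For the lower bound the paper merely states ``the lower bound follows from the construction $T\times T$,'' relying on Proposition~\ref{ConstF9} for $F_9,F_{10}$ and implicitly on the symmetry between $T$ and $T^c$ for the complements; you instead give an explicit verification that $F_9^c,F_{10}^c\nprec T\times T$, which is a welcome detail. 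Your zero-location argument is sound, though it can be compressed: $I_3^c\prec F_9^c,F_{10}^c$, every two rows of $I_3^c$ contain the configuration $\begin{bmatrix}1&0\\0&1\end{bmatrix}$, no two rows of $T$ do, and any three rows of $T\times T$ include two from the same factor, so $I_3^c\nprec T\times T$ --- this is the same obstruction your weight-2-rows count encodes, stated at the level of the minimal quadratic subconfiguration.
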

\begin{proof}
    The lower bound follows from the construction $T\times T$, and the upper bound is a consequence of Lemma~\ref{L-quad} and the observations that $F_9,F_{10},F_9^c,F_{10}^c\prec [01]\times [01]\times T_4$,\  $F_9,F_{10}\prec [01]\times I_3\prec [01]\times[01]\times I_3$ and $F_9^c,F_{10}^c\prec [01]\times I_3^c\prec [01]\times [01]\times I_3^c$.
\end{proof}
\section{Avoiding $Q_3(t)$}\label{S-F2}
We consider a slight generalization of $Q_3$  \vspace{5 mm}
\[
    Q_3(t)=\begin{bmatrix}
    0 &  \bovermat{t}{1 \cdots 1} & \bovermat{t}{0  \cdots 0}& 1\\ 
    0 &  0 \cdots 0 & 1 \cdots 1 &1
    \end{bmatrix},
\]
where we always assume $t\ge 2$ when we write $Q_3(t)$.  We have the following result from \cite{AK}.  
\begin{thm}\label{T-fam}
    $\forb(m,Q_3(t),t\cdot I_k)=\forb(m,Q_3(t),t\cdot I^c_k)=\Theta(m)$ for any fixed $k$.
\end{thm}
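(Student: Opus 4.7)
The lower bound $\Omega(m)$ comes from $I_m$: as a simple $m\times m$ matrix, $I_m$ has no two ones in a common column and hence no pair of rows supports a $\binom{1}{1}$ column, so $Q_3(t)\nprec I_m$; and $I_m$ has no repeated columns, so $t\cdot I_k\nprec I_m$ for $t\ge 2$. Also, note that the column multiset of $Q_3(t)$ is invariant under $0/1$-swap, so $Q_3(t)^c=Q_3(t)$, and thus the two equalities in the statement reduce to each other by complementing $A$.

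For the upper bound I would attempt standard induction on $m$. Let $A\in\Avoid(m,Q_3(t),t\cdot I_k)$ be extremal and pick a row $r$; then $|A|\le|C_r(A)|+\forb(m-1,Q_3(t),t\cdot I_k)$. The central structural observation is that on each row $s\ne r$ of $C_r(A)$, one of the values $0,1$ must appear fewer than $t$ times. Indeed, if both values appear at least $t$ times on row $s$ of $C_r(A)$, then the doubling of those columns by row $r$ produces at least one $\binom{0}{0}$, at least $t$ copies each of $\binom{1}{0}$ and $\binom{0}{1}$, and at least one $\binom{1}{1}$ on rows $r,s$ of $A$, yielding $Q_3(t)\prec A$.

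This observation alone gives only $|C_r(A)|\le (t-1)(m-1)+1$ (by complementing rows so that the minority value is $1$ on every row), which plugged into the induction yields only a quadratic upper bound. The bound $|C_r|=\Theta(m)$ is in fact genuinely attained: for $t\ge 3$, the matrix $A$ consisting of the columns $\binom{0}{e_i}$ and $\binom{1}{e_i}$ for $i=1,\ldots,m-1$ lies in $\Avoid(m,Q_3(t),t\cdot I_k)$ and has $|C_r|=m-1$, so a single-row induction is fundamentally insufficient, and $t\cdot I_k$ avoidance must be used globally. The plan is to classify pairs of rows $(r,s)$ by which of the four $Q_3(t)$-dichotomy cases applies: (A) no column has $\binom{0}{0}$; (B) no column has $\binom{1}{1}$; (C) fewer than $t$ columns have $\binom{1}{0}$; (D) fewer than $t$ columns have $\binom{0}{1}$. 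Using $t\cdot I_k$ avoidance to limit monochromatic substructures --- for instance, $k$ rows pairwise in case (A) whose $1$-supports each have size $\ge t$ would immediately give $t\cdot I_k\prec A$ --- one expects to obtain a bounded structural decomposition of $A$ of linear total size.

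The main obstacle is precisely this combinatorial step: the four cases interact intricately, and each must be combined with $t\cdot I_k$ avoidance (as well as its complementary form, which is equivalent here) to cut the size from the naive $O(m^2)$ bound down to $O(m)$. Neither forbidden configuration alone suffices --- $\forb(m,Q_3(t))$ is quadratic as witnessed by $I\times I^c$, and avoiding $t\cdot I_k$ alone is far too permissive --- so the proof hinges on showing that the common structural constraints are incompatible with a super-linear extremal construction. The full execution of this analysis is carried out in \cite{AK}.
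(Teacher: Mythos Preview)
The paper does not actually prove this theorem: it is quoted verbatim as a result from \cite{AK}, with no argument given. Your proposal likewise ends by deferring the full upper-bound analysis to \cite{AK}, so in that sense the two ``proofs'' coincide. The preliminary material you add --- the $I_m$ lower bound, the observation that $Q_3(t)^c=Q_3(t)$ (reducing the $t\cdot I_k^c$ case to the $t\cdot I_k$ case), and the structural remark that each row of $C_r(A)$ has a minority value occurring fewer than $t$ times --- is all correct and useful context, and your example showing that $|C_r|$ can genuinely be of order $m$ correctly identifies why single-row standard induction is not enough. Since neither you nor the paper carries out the linear upper bound in full, there is nothing further to compare; your outline of a row-pair dichotomy is plausible but, as you acknowledge, is not the proof itself.
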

\begin{cor}\label{1Q3}
    $\forb(m,Q_3,F)=\Theta(m)$ for $F=1_{4,1},\ F_{10},\ 0_{4,1},\ F_{10}^c$.
\end{cor}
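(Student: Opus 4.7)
The plan is to deduce the corollary directly from Theorem~\ref{T-fam} with $t=2$, noting that $Q_3 = Q_3(2)$ up to a row permutation. Since $F \prec F'$ implies $\Avoid(m,\{Q_3,F\}) \subseteq \Avoid(m,\{Q_3,F'\})$, it suffices to show, for each $F \in \{1_{4,1}, F_{10}, 0_{4,1}, F_{10}^c\}$, that $F \prec 2\cdot I_k$ or $F \prec 2\cdot I_k^c$ for some fixed $k$; the theorem will then give $\forb(m,Q_3,F) = O(m)$.

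The containments are immediate to exhibit. Taking the first four rows and first three columns of the leftmost copy of $I_4$ inside $2\cdot I_4$ yields $F_{10} = \begin{bmatrix}I_3\\ 0_{1,3}\end{bmatrix}$. Every column of $2\cdot I_5^c$ has exactly four $1$'s, so restricting to those four rows produces $1_{4,1}$. The remaining two cases follow by the identity $\forb(m,\mathcal{F}) = \forb(m,\mathcal{F}^c)$ together with $Q_3^c = Q_3$ as a configuration; alternatively one may exhibit $0_{4,1} \prec 2\cdot I_5$ and $F_{10}^c \prec 2\cdot I_4^c$ directly by the same sort of argument.

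The matching $\Omega(m)$ lower bound comes from standard small constructions. The matrix $I_m$ avoids $Q_3$, since any two of its rows produce only the column types $(0,0)$, $(0,1)$, and $(1,0)$, never $(1,1)$, and $I_m$ also avoids $1_{4,1}$ and $F_{10}^c$ because every column has a single $1$. The matrix $T_m$ avoids $Q_3$ since any two of its rows produce no $(0,1)$-column, and it avoids $F_{10}$ since three columns of $T_m$ can never form $I_3$ (the $1$'s in any column occupy an initial segment of rows); the configuration $0_{4,1}$ is then handled by $I_m^c$ via complementation. No step is genuinely difficult here---the corollary is essentially a short reduction to Theorem~\ref{T-fam}, and the only care required is to verify these four small containments into $2\cdot I_k$ or $2\cdot I_k^c$.
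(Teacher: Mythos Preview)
Your proof is correct and follows essentially the same approach as the paper's. The paper's argument is just a terser version: it notes that each $F$ is contained in $I_k$ or $I_k^c$ for large $k$ (hence in $2\cdot I_k$ or $2\cdot I_k^c$), applies Theorem~\ref{T-fam} for the upper bound, and uses $I_m$ or $I_m^c$ for the lower bound---your use of $T_m$ for the $F_{10}$ case works too, but $I_m^c$ would have sufficed uniformly.
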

\begin{proof}
    Each of these $F$ is contained in either $I_{k}$ or $I_{k}^c$ for sufficiently large $k$, so Theorem~\ref{T-fam} gives the upper bound, and either $I_m$ or $I_m^c$ gives the lower bound.
\end{proof}

Our main result for this section will be a stability theorem which says that large $Q_3(t)$ avoiding matrices ``look like'' $I\times I^c$, and from this we will be able to prove an upper bound for $\forb(m,Q_3,F_{11})$, and more generally for $\forb(m,Q_3(t),I_r\times I_s)$.  We first introduce some terminology for the proof.

We will say that a row $r$ is \de{sparse} when restricted to a set of columns $C$ if, restricted to $C$, $r$ has at least one 0 but fewer than $t$ 0's (i.e. $r$ has few 0's but is not identically 1), and we will say that a row $r$ is \de{dense} when restricted to a set of columns $C$ if $r$ has at least one 1 and at least $t$ 0's within the columns of $C$ (i.e. $r$ has many 0's but is not identically 0).  We will say that a column $c\in C$ is \de{identified} by a sparse row $r$ if $r$ has a 0 in column $c$.

If $A$ is a matrix and $C$ is a set of columns (not necessarily a subset of the columns of $A$), then $A\setminus C$ will denote the set of columns in $A$ that are not in $C$.  We define the matrix $Q_3(t;0)$ to be $Q_3(t)$ without its column of 1's.  Lastly, we restate Theorem~\ref{T-fam} as follows:   for any fixed $k$ and $t$ there exists a constant $c_{k,t}$ such that if $A$ is an $m$-rowed simple matrix with $|A|> c_{k,t}m$ and $Q_3(t)\nprec A$, then $t\cdot I_k\prec A$.

\begin{thm}\label{T-Q3Main}
Let $A\in \Avoid(m,Q_3(t))$ with $|A|=\omega(m\log m)$.  There exists a set of integers $\set{k_1,\ldots,k_y}$ and a set $A'=\set{A'_1,\ldots,A'_y},$ of configurations $A'_j\prec A$ such that:
\begin{enumerate}
    \item $k_{j+1}\le \half k_j$ for all $j$, and $y\le\log m$.
    \item There exists $k_j$ rows of $A$ such that the columns of $A'_j$ restricted to these rows are columns of $I_{k_j}$.
    \item If $i$ is a column of $I_{k_j}$ and $C_i^j$ is the set of columns in $A'_j$ that are an $i$ column in the rows mentioned above, then no row restricted to $C_i^j$ is dense, and every column of $C_i^j$ is identified by some sparse row.
    \item $|A|=\Theta(\sum |A'_j|)$.
\end{enumerate}
\end{thm}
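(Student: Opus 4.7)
The plan is to build the sets $A'_j$ by an iterative peeling argument. Start with $B_1 = A$ and let $k_1$ be the largest integer such that Theorem~\ref{T-fam} guarantees $t \cdot I_{k_1} \prec A$, i.e., the largest $k_1$ with $|A| > c_{k_1,t}\,m$; the hypothesis $|A| = \omega(m\log m)$ forces $k_1 \to \infty$ with $m$. Having built $A'_1,\dots,A'_{j-1}$, let $B_j$ be the columns of $A$ not placed in any previous $A'_{j'}$, and continue as long as $|B_j|$ still exceeds the threshold $c_{k_j,t}\,m$ for $k_j := \lfloor k_{j-1}/2 \rfloor$.

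At stage $j$, apply Theorem~\ref{T-fam} to $B_j$ with parameter $k_j$ to obtain $k_j$ rows $R_j \subseteq [m]$ on which $B_j$ contains a copy of $t \cdot I_{k_j}$. Define $\tilde C^j_i$ to be the columns of $B_j$ whose restriction to $R_j$ equals the $i$-th column of $I_{k_j}$, and set $\tilde A'_j = \bigcup_i \tilde C^j_i$. I then prune $\tilde A'_j$ to obtain $A'_j$ by deleting columns that cause a row to be dense on some block. The key combinatorial observation is that if a row $r \notin R_j$ is dense on $\tilde C^j_i$, then on the two rows $\{i,r\}$ the block $\tilde C^j_i$ supplies at least $t$ columns of type $\left[\begin{smallmatrix}1\\0\end{smallmatrix}\right]$ and at least one of type $\left[\begin{smallmatrix}1\\1\end{smallmatrix}\right]$; if $A$ also contains a column of type $\left[\begin{smallmatrix}0\\0\end{smallmatrix}\right]$ and at least $t$ columns of type $\left[\begin{smallmatrix}0\\1\end{smallmatrix}\right]$ on $\{i,r\}$, a copy of $Q_3(t)$ arises, contradicting the hypothesis. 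Thus dense rows are globally constrained, and a double-counting argument bounds the total number of columns that must be removed from all blocks by a small fraction of $|\tilde A'_j|$. Once no dense row remains, identification by sparse rows follows from simplicity of $A$: any two distinct columns of $C^j_i$ must be separated by some row, and after pruning that row can be neither constant nor dense, hence sparse.

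By construction $k_{j+1} \le \tfrac12 k_j$, so the iteration halts after $y \le \log_2 k_1 \le \log m$ stages, giving condition (1); condition (2) holds by the choice of $R_j$; and condition (3) follows from the pruning. For condition (4), each $A'_j$ retains $\Omega(|B_j|)$ columns (the pruning removes at most a small fraction), so $|B_{j+1}| \le (1-\Omega(1))|B_j|$ and $\sum_j |A'_j| = \Theta(|A|)$.

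The main difficulty is the pruning step. One must show that the deletions needed to kill all dense rows on all blocks $\tilde C^j_i$, uniformly over $i \in R_j$ and over candidate dense rows $r \notin R_j$, are collectively small. This rests on the two-row $Q_3(t)$-avoidance argument above together with careful bookkeeping to avoid over-counting overlapping obstructions, and in particular to show that the ``$<t$ columns of type $\left[\begin{smallmatrix}0\\1\end{smallmatrix}\right]$'' alternative in the dichotomy cannot hold for too many row pairs $(i,r)$ simultaneously. A subtler point is coordinating the shrinkage of $|B_j|$ from stage to stage so that Theorem~\ref{T-fam} remains applicable throughout the $y \le \log m$ iterations, which is precisely where the $\omega(m \log m)$ growth hypothesis on $|A|$ is used.
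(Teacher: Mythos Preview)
Your outline has the right shape, but the two load-bearing steps are not established, and the paper's argument handles them differently.

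First, the halving $k_{j+1}\le\tfrac12 k_j$ is not something you can impose; in the paper it is a \emph{conclusion}. The paper defines $k_j$ to be the \emph{maximum} $k$ with $t\cdot I_k$ in the current remainder (not merely a value guaranteed by Theorem~\ref{T-fam}), and moreover chooses the specific copy of $t\cdot I_{k_j}$ so as to minimise the leftover $|C^{j+1}|$. Lemma~\ref{L-shrink} then shows that any $t\cdot I_{k_{j+1}}$ inside the leftover must have each of its rows ``mostly~1'' on at least two distinct blocks $C_i^j$, forcing $k_{j+1}\le\tfrac12 k_j$; both maximality of $k_j$ and the extremal choice of the copy are used. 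Your setting $k_j:=\lfloor k_{j-1}/2\rfloor$ by fiat and invoking Theorem~\ref{T-fam} only to \emph{produce} a copy gives you no handle on what lives in the remainder.

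Second, and more seriously, your pruning step is not an argument, and the paper does not proceed this way. You correctly derive the dichotomy (essentially Lemma~\ref{L-dense}): a row $r$ dense on some $\tilde C_i^j$ has either fewer than $t$ ones outside $\tilde C_i^j$ or is identically~$1$ there. But from this you cannot conclude that deleting ``a small fraction'' of columns kills all dense rows: in the second branch $r$ may have arbitrarily many $0$'s and $1$'s inside $\tilde C_i^j$, so there is no cheap deletion. The paper instead \emph{removes the dense rows} from $A'_j$ and then bounds the set $\hat A_1$ of columns no longer identified by any sparse row. The bound $|\hat A_1|=O(m)$ (Lemma~\ref{L-few dense}) is the crux and uses the maximality of $k_1$ in an essential way: if some $|\hat C_i^1|$ were too large one could locate two suitable dense rows that, swapped into the identity block, would yield $t\cdot I_{k_1+1}$, a contradiction. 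Your framework, lacking maximality, has no access to this mechanism. Relatedly, the paper does \emph{not} claim $|A'_j|=\Omega(|B_j|)$ or geometric decay of $|B_j|$; it only shows that $O(m)$ columns are discarded at each of at most $\log m$ stages, so in total $O(m\log m)=o(|A|)$ columns are lost, which is exactly where the hypothesis $|A|=\omega(m\log m)$ is spent.
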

We first present an outline of the proof before going into the details. We are given a large $Q_3(t)$ avoiding matrix $A_0$, and as a first step we remove all rows from $A_0$ that have few 1's (for technical reasons) to get a new matrix $A_1$.  We then find the largest $t\cdot I_k$ in $A_1$, and our goal is to use this as the $I_{k_1}$ base for $A'_1$.  To do so, we trim $A_1$ by getting rid of all columns of $C_i^1$ that are not identified by a sparse row, as well as all rows that are dense restricted to some $C_i^1$.  This gives us $A'_1$, and we repeat the process on the remaining columns of $A_1,\ A_2$ (after again removing rows with few 1's). It turns out that the largest $t\cdot I$ in $A_2$, $I_{k_2}$,  will satisfy $k_2\le \half k_1$, and thus we can repeat this process at most $\log m$ times.  At each step we remove only $O(m)$  columns, so in total only $O(m\log m)$ columns of $A_0$ were removed.  As $|A_0|=\omega(m\log m)$, the columns that remain (those of $A'$) must be asymptotically as large as our original $A_0$.
\begin{proof}
Let $A_0\in \Avoid(m,Q_3(t))$ with $|A_0|=\omega(m\log m)$.  Let $R_1$ denote the set of rows of $A_0$ that have fewer than $3t-2$ 1's, and let $A_1$ denote $A_0$ with these rows removed.  Note that $A_1$ need not be a simple matrix, but if $C_{R_1}$ denotes the set of columns that have a 1 in some row of $R_1$, then $A_1\setminus C_{R_1}$ will be simple.  As $|C_{R_1}|\le (3t-2)m=O(m),\ |A_1\setminus C_{R_1}|=\Theta(|A_0|)$.  Note that we will be working with the matrix $A_1$, \textit{not} its simplification $A_1\setminus C_{R_1}$, in order to use the fact that every row has at least $3t-2$ 1's.

Define $k_1$ to be the largest integer such that $t\cdot I_{k_1}\prec A_1$.  As $|A_1\setminus C_1|=\omega(m)$, Theorem~\ref{T-fam} tells us that we have $t\cdot I_k\prec A_1\setminus C_1\prec A_1$ for any fixed $k$ (so in particular we can assume that $k_1\ge 3$).  Rearrange rows so that this $t\cdot I_{k_1}$ appears in the first $k_1$ rows of $A_1$.

Note that no column of $A_1$ can have two 1's in the first $k_1$ rows.  Indeed, any two rows of $t\cdot I_{k_1}$ for $k_1\ge 3$ induce a $Q_3(t;0)$, and hence if a column had 1's in two of these rows we would have $Q_3(t)\prec A_1$.  We can thus partition the columns of $A_1$ as follows.  We will say that a column $c$ belongs to the set $C_i^1$ for $1\le i\le k_1$ if $c$ has a 1 in row $i$, and we will say that $c\in C^2$ if $c$ has no 1's in these rows.  We will make the additional assumption that the $t\cdot I_{k_1}$ we placed in the first $k_1$ rows was such that $|C^2|$ is minimal.  Note that $|C_i^1|\ge 3t-2$ for all $i$, as otherwise the $i$th row would belong to $R_1$ and hence not be in $A_1$.  

We now examine the rows that are dense in some $C_i^1$.

\begin{lem}\label{L-dense}
    If a row $r$ restricted to $C_i^1$ is dense, then restricted to $A_1\setminus C_i^1$, $r$ has at most $t-1$ 1's or $r$ is identically 1.
\end{lem}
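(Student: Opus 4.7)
The plan is to argue by contradiction: assume that on $A_1 \setminus C_i^1$ the row $r$ has both at least $t$ ones and at least one zero, and then locate a copy of $Q_3(t)$ inside rows $i$ and $r$ of $A_1$. This will contradict $A_1 \subseteq A_0 \in \Avoid(m,Q_3(t))$.

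The key preliminary observation concerns row $i$ itself. By the definition of $C_i^1$, row $i$ is identically $1$ on $C_i^1$. On the other hand, by the remark made just after the choice of $k_1$ (namely, no column of $A_1$ can have two $1$'s among the first $k_1$ rows, since any two rows of $t\cdot I_{k_1}$ with $k_1\ge 3$ induce $Q_3(t;0)$), row $i$ must be identically $0$ on $A_1\setminus C_i^1$: every column there lies either in some $C_j^1$ with $j\ne i$, in which case row $i$ is $0$ by the two--ones obstruction, or in $C^2$, in which case row $i$ is $0$ by the defining property of $C^2$.

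Under the contradiction hypothesis, I count the column types of the two--row submatrix cut out by rows $i$ and $r$. On $C_i^1$, the density of $r$ yields at least $t$ columns of type $\binom{1}{0}$ (positions where $r$ is $0$) together with at least one column of type $\binom{1}{1}$ (a position where $r$ is $1$). On $A_1 \setminus C_i^1$, row $i$ is zero throughout, so the contradiction hypothesis supplies at least $t$ columns of type $\binom{0}{1}$ (the ones of $r$) and at least one column of type $\binom{0}{0}$ (the zero of $r$). Because the two pairs of types live in the disjoint sets $C_i^1$ and $A_1\setminus C_i^1$, all four types are witnessed by distinct columns, and they assemble exactly into $Q_3(t)$ on rows $i$ and $r$.

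Negating the contradiction hypothesis yields precisely the statement of the lemma: either $r$ has at most $t-1$ ones on $A_1\setminus C_i^1$, or $r$ has no zero there and is therefore identically $1$ on $A_1\setminus C_i^1$. There is no real obstacle to this argument; the only care needed is in verifying that row $i$ is genuinely $0$ on all of $A_1\setminus C_i^1$, which is where the earlier structural remark about $t\cdot I_{k_1}$ is essential.
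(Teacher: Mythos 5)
Your argument is correct and matches the paper's proof: both locate a copy of $Q_3(t)$ on rows $i$ and $r$ by collecting the four required column types — $\binom{1}{0}$ and $\binom{1}{1}$ from $C_i^1$ using density, and $\binom{0}{1}$ and $\binom{0}{0}$ from $A_1\setminus C_i^1$ using the (negated) conclusion together with row $i$ being identically $0$ there. The only difference is that you spell out the justification that row $i$ vanishes on $A_1\setminus C_i^1$, which the paper leaves implicit.
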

\begin{proof}
    Assume $r$ is dense restricted to $C_i^1$, i.e. it has at least $t$ 0's and one 1 restricted to $C_i^1$.  If $r$ had $t$ 1's and a 0 in $A\setminus C_i^1$, then by looking at the $i$th row, row $r$, and the relevant columns, we would find a $Q_3(t)$.
\end{proof}
We would like to strengthen the above lemma to say that dense rows are either identically 0 or identically 1 outside of their $C_i^1$, and to do so we'll have to ignore a small number of columns of $A_1$.  We will say that a column $c$ is ``bad'' if there exists a row $r$ and integer $i$ such that $r$ is dense restricted to $C_i^1$, $r$ is not identically 1 in $A\setminus C_i^1$, and $c$ has a 1 in row $r$.  Let $\overline{C^1}$ denote the set of bad columns.

\begin{lem}
    $| \overline{C^1}|=O(m)$.
\end{lem}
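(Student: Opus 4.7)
The plan is to split $\overline{C^1}$ by the location of each bad column relative to its witnessing bad row, and to bound each piece separately. For each index $i\in\{1,\ldots,k_1\}$ let $R^*_i$ denote the set of rows that are dense on $C_i^1$ and not identically 1 on $A\setminus C_i^1$, so that $\overline{C^1}$ is exactly the set of columns having a 1 in some row of $R^*:=\bigcup_i R^*_i$. My first step would be to verify that $\{R^*_i\}$ partitions $R^*$: every row of $A_1$ has at least $3t-2$ ones, while Lemma~\ref{L-dense} forces any row in $R^*_i$ to have at most $t-1$ ones in $A\setminus C_i^1$ and hence at least $2t-1$ ones in $C_i^1$; simultaneous membership in $R^*_i$ and $R^*_j$ for $i\neq j$ would place $\geq 2t-1$ ones of $r$ inside the disjoint $C_j^1\subseteq A\setminus C_i^1$, a contradiction. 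Consequently $|R^*|\leq m$, and since each $r\in R^*_i$ has at most $t-1$ ones outside $C_i^1$, the bad columns lying in $C^2$ or in some $C_j^1$ with $j\neq i(r)$ total at most $(t-1)|R^*|=O(m)$.

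The remaining task is to bound $\sum_i\bigl|\bigcup_{r\in R^*_i} S_r\bigr|$, where $S_r\subseteq C_{i(r)}^1$ is the set of columns of $C_{i(r)}^1$ on which $r$ has a 1. Here I would exploit $Q_3(t)$-avoidance pairwise within each $R^*_i$: if $r_1,r_2\in R^*_i$ satisfied both $|S_{r_1}\setminus S_{r_2}|\geq t$ and $|S_{r_2}\setminus S_{r_1}|\geq t$, then on rows $r_1,r_2$ one could assemble $t$ columns whose restriction reads $(1,0)^T$ and $t$ columns reading $(0,1)^T$ inside $C_i^1$, a $(0,0)^T$-column in $A\setminus C_i^1$ (abundant, since both rows have at most $t-1$ ones there), and a $(1,1)^T$-column coming from $S_{r_1}\cap S_{r_2}$ or from joint ones outside $C_i^1$, yielding $Q_3(t)\prec A_1$, a contradiction. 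Hence $\{S_r:r\in R^*_i\}$ is a \emph{near-chain}: for each pair at least one side of the two-way difference has fewer than $t$ elements.

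The main obstacle is converting this near-chain structure into a genuinely linear-in-$m$ bound, since naively $|\bigcup_{r\in R^*_i} S_r|\leq|C_i^1|$ sums to $|A_1|$, which is far too large. I would combine the near-chain property with the minimality of $|C^2|$: replacing row $i$ in the $t\cdot I_{k_1}$ by any $r\in R^*_i$ and repartitioning gives $|C_i^1\setminus S_r|\geq|\{c\in C^2:r\text{ has a 1 at }c\}|$, constraining how $S_r$ sits inside $C_i^1$. Under the near-chain structure, each member of $R^*_i$ beyond a maximizing element $r^\diamond_i$ should add at most $O(t)$ new elements to the union, so this ``noise'' part sums to $O(t|R^*|)=O(m)$. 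The crux is then to bound $\sum_i|S_{r^\diamond_i}|=O(m)$, likely via a further swap argument: if these maximizers were too large in total, iteratively substituting bad rows into the top $k_1$ rows while preserving the minimality of $|C^2|$ should eventually produce a $t\cdot I_{k_1+1}$ configuration, contradicting the maximality of $k_1$.
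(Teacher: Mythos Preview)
You have over-read the definition of ``bad column.'' From the sentence immediately preceding it (``dense rows are either identically 0 or identically 1 \emph{outside of their} $C_i^1$'') and from how $\overline{C^1}$ is used in the next lemma, the intended meaning is that $c$ is bad when $c\in A_1\setminus C_i^1$ has a 1 in row $r$; columns of $C_i^1$ itself are not being thrown away here. Under that reading the paper's proof is one line: each row $r\in R^*_i$ has at most $t-1$ ones in $A_1\setminus C_i^1$ by Lemma~\ref{L-dense}, and there are at most $m$ rows, so $|\overline{C^1}|\le (t-1)m$. Your first paragraph already contains exactly this argument (the disjointness of the $R^*_i$ is a nice observation but not needed, since $|R^*|\le m$ is automatic).

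Everything from ``The remaining task'' onward is therefore attacking a statement the paper neither claims nor needs. That part is also genuinely incomplete as written: the near-chain conclusion requires a $(1,1)^T$ column on rows $r_1,r_2$, which you assert comes ``from $S_{r_1}\cap S_{r_2}$ or from joint ones outside $C_i^1$'' without verifying either source is nonempty; and the final bound $\sum_i|S_{r_i^{\diamond}}|=O(m)$ is only supported by a ``likely'' swap heuristic. So even if one insisted on the literal reading of the definition, this would not yet be a proof.
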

\begin{proof}
	Each dense row $r$ contributes at most $t-1$ columns to $ \overline{C^1}$ by Lemma \ref{L-dense}, and hence $|\overline{C^1}|\le (t-1)m=O(m)$.
\end{proof}

We now wish to ignore the dense rows of $A_1$, as well as any rows of $\bigcup C_i^1$ that are not identified by a sparse row.  Rearrange rows so that the bottom $\ell$ rows of $A_1$ consist of all rows that when restricted to some $C_i^1$ are dense.  Let $\dense{C_i^1}$ denote the columns of $C_i^1$ that are not identified by a sparse row and that are not in $C_{R_1}$ or $\overline{C^1}$.  Let $\dense{A_1}$ denote $A_1$ restricted to the top $k_1$ rows, the bottom $\ell$ rows, and the columns of $\bigcup \dense{C_i^1}$.

\begin{lem}\label{L-simple}
	 $\dense{A_1}$ is a simple matrix.
\end{lem}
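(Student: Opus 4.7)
The plan is to show that any two distinct columns of $\dense{A_1}$ remain distinguishable after we throw away the ``middle'' rows (those not among the top $k_1$ rows and not among the bottom $\ell$ dense rows). Since $A_1$ arises from the simple matrix $A_0$ by deleting the rows of $R_1$, each column of $A_1$ (and hence of $\dense{A_1}$) can be identified with a unique column of $A_0$, so two distinct columns of $\dense{A_1}$ correspond to two distinct columns $c,c'$ of $A_0$. Moreover, every column appearing in $\dense{A_1}$ belongs to $\bigcup_i\dense{C_i^1}$, which by construction is disjoint from $C_{R_1}$; hence both $c$ and $c'$ carry a $0$ in every row of $R_1$, so $c$ and $c'$ must differ in some row that survives in $A_1$.

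The first case is easy: if $c\in\dense{C_i^1}$ and $c'\in\dense{C_j^1}$ with $i\ne j$, then among the top $k_1$ rows $c$ has a single $1$ in row $i$ and $c'$ has a single $1$ in row $j$, so they already differ on the rows kept in $\dense{A_1}$. The nontrivial case is when $c,c'\in\dense{C_i^1}$ for the same $i$. Here $c$ and $c'$ agree on the top $k_1$ rows (both equal the $i$th column of $I_{k_1}$ there), so we must verify that they disagree somewhere in the bottom $\ell$ dense rows. For this I plan to rule out any disagreement on a middle row $r$.

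A middle row $r$ is one that is not dense with respect to any $C_j^1$, so in particular $r$ restricted to $C_i^1$ is either identically $0$, identically $1$, or sparse. In the first two cases $c$ and $c'$ obviously agree on row $r$. In the sparse case, the definition of $\dense{C_i^1}$ excludes any column identified by a sparse row, so $r$ has a $1$ in both $c$ and $c'$, giving agreement again. Hence $c$ and $c'$ agree on all middle rows; combined with the agreement on the top $k_1$ rows and on the $R_1$ rows, the forced disagreement of $c$ and $c'$ must occur among the bottom $\ell$ rows of $\dense{A_1}$.

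The main obstacle is bookkeeping: one has to track which columns are in $\bigcup\dense{C_i^1}$ (explicitly excluding $C_{R_1}$ and $\overline{C^1}$, and unidentified by sparse rows) and then verify the trichotomy ``identically $0$, identically $1$, or sparse'' on $C_i^1$ for every non-dense row, so that the definition of $\dense{C_i^1}$ kicks in precisely when it needs to. Once the three row-classes are handled as above, simplicity of $\dense{A_1}$ follows immediately from simplicity of $A_0$.
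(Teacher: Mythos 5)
Your proposal is correct and follows essentially the same approach as the paper's proof: both rely on the simplicity of the original matrix (you phrase this via $A_0$, the paper via $A_1\setminus C_{R_1}$), on the trichotomy that every middle row restricted to $C_i^1$ is identically $0$, identically $1$, or sparse, and on the observation that sparse rows cannot distinguish columns of $\dense{C_i^1}$ because any column with a $0$ in a sparse row was already excluded. The paper argues by contradiction (assuming two columns of $\dense{A_1}$ coincide) while you argue directly by ruling out disagreement on each non-kept row class, but this is a cosmetic difference.
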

\begin{proof}
	
	Let $\hat{c}$ and $\hat{d}$ be columns of $\dense{A_1}$ with corresponding columns $c,d$ in $A_1\setminus C_{R_1}$ (as no $\dense{C_i^1}$ columns are in $C_{R_1}$).  If $\hat{c}=\hat{d}$, then clearly we must have $c,d\in  C_i^1$ for some $i$.  As $c\ne d$ (because  $A_1\setminus C_{R_1}$ is a simple matrix), we must have $c$ and $d$ differing in some row $r$ above the bottom $\ell$ rows, say $c$  has a 0 in row $r$ and $d$ has a 1.  But this means that  $r$ must be sparse (as every row between the top $k_1$ rows and bottom $\ell$ rows is either identically 0, identically 1, or sparse), and hence $c$ is identified by a sparse row, contradicting $\hat{c}$ belonging to $\dense{A_1}$.
\end{proof}

\begin{lem}\label{L-few dense}
	$|\dense{A_1}|=O(m)$.
\end{lem}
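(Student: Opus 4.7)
The plan is to identify a small block of rows within $\dense{A_1}$ on which the columns in each $\dense{C_i^1}$ are free to differ, and then to apply Theorem~\ref{T-fam} to get a linear bound.

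By Lemma~\ref{L-simple}, $\dense{A_1}$ is simple, so $|\dense{A_1}|$ equals the number of distinct column patterns. The columns partition as $\bigsqcup_{i=1}^{k_1}\dense{C_i^1}$, with every column of $\dense{C_i^1}$ carrying a $1$ in row $i$ and $0$'s in the other $k_1-1$ top rows. Let $R_i$ be the set of bottom rows of $\dense{A_1}$ that are dense exclusively in $C_i^1$. I claim the columns of $\dense{C_i^1}$ can differ only on rows in $R_i$: a non-dense row restricted to $C_i^1$ is sparse, identically $0$, or identically $1$, and after the removal of the columns identified by sparse rows (and of those in $C_{R_1}\cup\overline{C^1}$) such a row is constant on $\dense{C_i^1}$; a dense row that is not dense in $C_i^1$ falls in the same three cases on $C_i^1$ and is also constant on $\dense{C_i^1}$; and a row dense in $C_i^1$ and simultaneously in some $C_j^1$ with $j\neq i$ is forced by Lemma~\ref{L-dense} together with the deletion of $\overline{C^1}$ to be identically $0$ on $\dense{C_i^1}$.

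Now invoke Theorem~\ref{T-fam} on $\dense{A_1}$: it is simple, has $k_1+\ell\le 2m$ rows, avoids $Q_3(t)$ as a submatrix of $A_1$, and by the maximality of $k_1$ it also avoids $t\cdot I_{k_1+1}$, so it should yield $|\dense{A_1}|=O(k_1+\ell)=O(m)$. The main obstacle is that Theorem~\ref{T-fam} is stated ``for any fixed $k$,'' while here $k=k_1+1$ may grow with $m$, so one must check that the constant implicit in the proof in \cite{AK} depends only on $t$ and not on $k$. Should that uniformity be unavailable, a block-wise fallback works: using that every row of $A_1$ has at least $3t-2$ ones together with the structural restriction above, a $Q_3(t)$-avoidance argument on the submatrix of $A_1$ with rows $R_i\cup\{i\}$ and columns $\dense{C_i^1}$ shows that this submatrix contains only $O(|R_i|+1)$ columns, giving $|\dense{A_1}|=\sum_i|\dense{C_i^1}|=O\bigl(k_1+\sum_i|R_i|\bigr)=O(m)$.
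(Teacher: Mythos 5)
The overall shape of your argument --- restrict attention to the rows that can actually vary within each $\dense{C_i^1}$ (your $R_i$), note that these sets are disjoint so $\sum_i |R_i| \le m$, and bound each $|\dense{C_i^1}|$ by $O(|R_i|+1)$ --- is exactly the paper's strategy, and your set $R_i$ coincides with the paper's set of $B_{i,0}\cup B_{i,1}$ rows. You also correctly flag that you cannot simply apply Theorem~\ref{T-fam} to all of $\dense{A_1}$, since the forbidden $t\cdot I_{k_1+1}$ has a parameter that grows with $m$.

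The gap is in the final, crucial step. You assert that ``a $Q_3(t)$-avoidance argument'' on the submatrix with rows $R_i\cup\{i\}$ and columns $\dense{C_i^1}$ yields $O(|R_i|+1)$ columns, but $Q_3(t)$-avoidance alone gives no linear bound: a simple matrix avoiding $Q_3(t)$ on $b$ rows can have $\Theta(b^2)$ columns. To apply Theorem~\ref{T-fam} you must also show that this block submatrix avoids $t\cdot I_3$, and this is the heart of the proof. One needs to split $R_i$ into the rows $B_{i,1}$ that are identically $1$ outside $C_i^1$ and the rows $B_{i,0}$ that are identically $0$ outside $C_i^1$ (after the bad-column removals), and argue separately: a $Q_3(t;0)$ arising from two rows of $B_{i,1}$ completes to a $Q_3(t)$ using any column outside $C_i^1$, while a $Q_3(t;0)$ arising from two rows of $B_{i,0}$, together with the ``every row has $\ge 3t-2$ ones'' cushion (which lets you discard the $\le 2t-2$ offending columns), produces a $t\cdot I_{k_1+1}$, contradicting the maximality of $k_1$. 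You gesture at both ingredients (the $3t-2$ property and the $k_1$-maximality), but without the $B_{i,0}$ vs.\ $B_{i,1}$ case distinction the chain from ``$t\cdot I_3$ appears in the block'' to a contradiction is not established, and so the claimed $O(|R_i|+1)$ bound is unsupported.
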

\begin{proof}
	By Lemma \ref{L-dense} (and the fact that $\dense{A_1}$ contains no columns of $\overline{C^1}$), we know that each row $r$ restricted to $\dense{C_i^1}$ can be one of four types: $r$ can be identically 0 restricted to $A_1\setminus C_i^1$ (in which case we will say it is a row of $B_{i,0}$), $r$ can be identically 1 restricted to $A_1\setminus C_i^1$ (in which case we will say it is a row of $B_{i,1}$), or $r$ can itself be either identically 0 or identically 1.  We thus have that the matrix $B_i$ formed by restricting $\dense{A_1}$ to the columns $\dense{C_i^1}$ and to the rows of $B_{i,0}$ and $B_{i,1}$ is simple with $|\dense{C_i^1}|$ columns.  Let $b_i$ denote the number of rows in $B_i$.  
	
    If $|B_i|>c_{3,t} b_i$, then we must have $t\cdot I_3\prec B_i$, and hence either $B_{i,0}$ or $B_{i,1}$ must contain a $Q_3(t;0)$.  If $B_{i,1}$ contains a $Q_3(t;0)$, then these rows and columns together with any column of $A_1\setminus C_i^1$ gives a $Q_3(t)$.  If $B_{i,0}$ contains a $Q_3(t;0)$, then one can find a $t\cdot I_{k_1+1}$ in $A_1$. Indeed, in $A_1$ (note that we are no longer ignoring the columns of $\overline{C^1}$ and $C_{R_1}$), take the two rows from $B_{i,0}$ that contain a $Q_3(t;0)$, ignore the at most $2t-2$ columns that have 1's in these rows outside of $C_i^1$, and swap these rows with rows $i$ and $k_1+1$.  After performing these steps, no column of $A_1$ has two 1's in any of the first $k_1+1$ rows (since we removed the at most $2t-2$ columns that could pose a problem), rows $i$ and $k_1+1$ by assumption have at least $t$ 1's, and as every other row had at least $3t-2$ 1's before ignoring the at most $2t-2$ columns, they all still have at least $t$ 1's.  Hence we have $t\cdot I_{k_1+1}\prec A_1$, contradicting our definition of $k_1$.  Thus we must have $|B_i|=|\dense{C_i}|\le c_{3,t} b_i$, and in total we have \[|\dense{A_1}|=\sum |\dense{C_i^1}|\le \sum c_t b_i\le c_t \ell\le c_t m,\] proving the statement.
\end{proof}

We now let $A_1'$ be $\bigcup C_i^1$ after removing the columns of $\dense{A_1},\ C_{R_1},\ $and $\overline{C^1}$ (which in total are only of size $O(m)$), along with the bottom $\ell$ rows.  If $|C^2|=O(m\log m)$, then $A'=\set{A'_1}$ meets all of the conditions of the theorem.  Otherwise we can repeat our argument.

Let $R_2$ denote the set of rows below the first $k_1$ rows such that if $r\in R_2$ then $r$ has fewer than $3t-2$ 1's when restricted to $C^2$, and let $C_{R_2}$ be the set of columns where one of these rows has a 1 in $C^2$.  Let $A_2$ be $A_1$ restricted to $C^2$ after ignoring the rows of $R_2$ and let $k_2$ be the largest integer such that $t \cdot I_{k_2}\prec A_2$.  Note that we can assume $k_2\ge 3$.

\begin{lem}\label{L-shrink}
	$k_2\le \half k_1$.
\end{lem}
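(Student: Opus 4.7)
The plan is to proceed by contradiction. Suppose $k_2>k_1/2$, and let $r_1,\ldots,r_{k_2}$ and $P'_1,\ldots,P'_{k_2}\subseteq C^2$ be the rows and column groups realizing $t\cdot I_{k_2}$ in $A_2$. I would first establish two structural facts. \emph{(a) Disjoint supports:} for $j\ne j'$, using $k_2\ge 3$ pick any $j''\ne j,j'$; then the $t$ columns of $P'_j$ give the $1$-$0$ block on $(r_j,r_{j'})$, $P'_{j'}$ gives the $0$-$1$ block, and any column of $P'_{j''}$ gives a $0$-$0$ column, so a common $1$-column on rows $r_j,r_{j'}$ would complete $Q_3(t)\prec A_1$, contradicting $Q_3(t)\nprec A_0$. \emph{(b) Case A/B dichotomy:} for each $i\in[k_1]$ and $j\in[k_2]$, row $r_j$ either has no $1$'s on $C_i^1$ (\emph{case A}) or has at least $|C_i^1|-t+1$ ones on $C_i^1$ (\emph{case B}); otherwise one $1$-column of $r_j$ in $C_i^1$ (as $1$-$1$), $t$ columns of $C_i^1$ with $r_j=0$ (as $1$-$0$), the block $P'_j$ (as $0$-$1$), and a column of $P'_{j''}$ with $j''\ne j$ (as $0$-$0$) realize $Q_3(t)$ on rows $i,r_j$. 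Set $S_j=\{i\in[k_1]:(i,j)\text{ is case B}\}$.

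Next I would show $|S_j|\ge 2$ for every $j$. If $S_j=\emptyset$, then the rows $[k_1]\cup\{r_j\}$ with column groups $P_i$ and $P'_j$ yield $t\cdot I_{k_1+1}\prec A_1$, contradicting maximality of $k_1$. If $|S_j|=1$, say $S_j=\{i_0\}$, then replacing row $i_0$ by $r_j$ among the first $k_1$ rows gives a valid alternative placement of $t\cdot I_{k_1}$ ($P'_j$ serves $r_j$, and case A for each $i'\ne i_0$ supplies $\ge t$ columns of $C_{i'}^1$ with $r_j=0$). A direct column count shows the new value of $|C^2|$ equals $|C^2|+|C_{i_0}^1|-|r_j^1|$, where $|r_j^1|$ is the total number of columns on which $r_j=1$. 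Since $r_j$ has at least $3t-2$ ones in $C^2$ (as $r_j\notin R_2$) and, by case B, at least $|C_{i_0}^1|-t+1$ ones on $C_{i_0}^1$ (and no others since $|S_j|=1$), we get $|r_j^1|\ge|C_{i_0}^1|+2t-1$. Minimality of $|C^2|$ in the original placement then forces $|C_{i_0}^1|\ge|r_j^1|\ge|C_{i_0}^1|+2t-1$, an absurdity.

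The crucial step, and the one I expect to be the main obstacle to spot, is the claim that each $i\in[k_1]$ satisfies $N_i\le 1$, where $N_i=|\{j:i\in S_j\}|$. If $r_j$ and $r_{j'}$ were both case B for the same $i$, each would leave at most $t-1$ zeros on $C_i^1$; but by fact (a), every column of $C_i^1$ must carry a zero in at least one of $r_j,r_{j'}$ (they share no $1$-column), so $|C_i^1|\le 2(t-1)=2t-2$, contradicting $|C_i^1|\ge 3t-2$. This collision between the near-saturation of a case-B row on $C_i^1$ and the disjoint-support condition is the critical observation. Combining everything,
\[
  2k_2 \le \sum_{j=1}^{k_2}|S_j|=\sum_{i=1}^{k_1}N_i\le k_1,
\]
giving $k_2\le k_1/2$ and contradicting the assumption $k_2>k_1/2$.
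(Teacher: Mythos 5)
Your proof is correct and follows essentially the same approach as the paper's: you establish (via $Q_3(t)$ with $k_2\ge 3$) that no column has $1$'s in two of the rows $r_j$, you show each $r_j$ is either identically $0$ or ``mostly $1$'' on each $C_i^1$ (your case A/B, the paper's sparse/identically-$1$ dichotomy), you rule out $|S_j|=0$ by the maximality of $k_1$ and $|S_j|=1$ by the minimality of $|C^2|$, and you bound each $N_i\le 1$ by the same pigeonhole on zeros against $|C_i^1|\ge 3t-2$, finishing with the double count. The only cosmetic difference is that you derive the non-denseness of $r_j$ on $C_i^1$ directly from a $Q_3(t)$ argument rather than from the observation that the $r_j$ lie above the bottom $\ell$ rows, and your column bookkeeping in the $|S_j|=1$ case is written out more explicitly.
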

\begin{proof}
	Note that any row $r$ that is part of this $t\cdot I_{k_2}$ must appear above the bottom $\ell$ rows (as restricted to $C^2$ the bottom $\ell$ rows either have fewer than $t$ 1's or they are identically 1).  Thus restricted to any $C_i^1$, $r$ is either identically 0, identically 1 or sparse.  We will say that a row $r$ is ``mostly 1'' restricted to $C_i^1$ if $r$  is identically 1 or sparse restricted to $C_i^1$ (i.e. $r$ has fewer than $t$ 0's restricted to these columns).  Rearrange rows so that this $t\cdot I_{k_2}$ appears in the first $k_2$ rows.  
	
	Note that because $k_2\ge 3$, no column can have two 1's in the first $k_2$ rows.  As $|C_i^1|\ge 3t-2\ge 2t-1$ for all $i$, any two rows that are mostly 1 restricted to any $C_i^1$ must contain a column with 1's in both of these rows. Hence restricted to any $C_i^1$ and the first $k_2$ rows, there can be at most one mostly 1 row.
	
	If row $1\le j\le k_2$ is not mostly 1 when restricted to any $C_i^1$, then we could use row $j$ to create a $t\cdot I_{k_1+1}\prec A_1$ by swapping it with our original $k_1+1$th row, contradicting the definition of $k_1$.  If there is precisely one $i$ such that  $j$ restricted to $C_i^1$ is mostly 1, then swapping row $j$ with the original  $i$th row gives a $t\cdot I_{k_1}$ that would have given us a smaller value for $|C^2|$ (as at least $3t-2$ 1's get added from $C^2$ and at most $t-1$ 1's are replaced by 0's of the mostly 1 row), which contradicts our choice of $t\cdot I_{k_1}\prec A_1$.  Hence every row $1\le j\le k_2$ must be mostly 1 restricted  to at least two different $C_i^1$, but as each $C_i^1$ can only contribute at most one mostly 1 row we must have $k_2\le  \half k_1$.
\end{proof}

We then perform identical arguments for the corresponding $C_i^2$ columns as we did with the $C_i^1$ columns to get an $A'_2$.  If $C^3$ is defined analogous to $C^2$ and if $C^3=O(m\log m)$, then we can take $A'=\set{A'_1,A'_2}$ which satisfies all the conditions of the theorems.  If not, we repeat the same argument.  But by Lemma \ref{L-shrink} this process can continue at most $\log m$ times, and when the process terminates $A'$ excludes only $O(m\log m)$ columns of $A_0$ (as it ignores $O(m)$ columns at each of the potentially $\log m$ steps), so it meets all of the criteria of the theorem.
\end{proof}

Theorem~\ref{T-Q3Main} allows us to reduce computing upper bounds of matrices in $\Avoid(m,\mathcal{F})$ where $Q_3(t)\in\mathcal{F}$ to computing upper bounds of matrices that are of the same form as the $A'_j$ matrices.

\begin{cor}\label{C-log bound}
    For $\mathcal{F}$ with $Q_3(t)\in \mathcal{F}$, let $\tilde{A}$ be the largest matrix such that $\tilde{A}\in \Avoid(m,\mathcal{F})$ and such that it meets all the requirements of the $A'_j$ matrices in the statement of Theorem~\ref{T-Q3Main}.  Then $\forb(m,\mathcal{F})=O(\max\set{|\tilde{A}|,m}\log m)$.
\end{cor}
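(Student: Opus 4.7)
The plan is to apply Theorem~\ref{T-Q3Main} directly to an extremal $\mathcal{F}$-avoider, splitting into two regimes by the size of the extremum. Let $A\in\Avoid(m,\mathcal{F})$ be chosen so that $|A|=\forb(m,\mathcal{F})$. In the easy regime $|A|=O(m\log m)$ the desired bound $O(\max\set{|\tilde{A}|,m}\log m)$ is immediate since the right-hand side is at least a constant times $m\log m$. So the interesting case is $|A|=\omega(m\log m)$, in which the hypothesis of Theorem~\ref{T-Q3Main} is satisfied; note that $Q_3(t)\nprec A$ because $Q_3(t)\in\mathcal{F}$.

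In the main regime I would apply Theorem~\ref{T-Q3Main} to produce configurations $A'_1,\ldots,A'_y\prec A$ with $y\le\log m$, each satisfying the structural properties (1)--(4) of that theorem. First I would observe that each $A'_j$ inherits $\mathcal{F}$-avoidance from $A$, since every sub-configuration of an $\mathcal{F}$-avoider is itself an $\mathcal{F}$-avoider. Combined with properties (2) and (3), this means every $A'_j$ is an admissible candidate in the definition of $\tilde{A}$, and therefore $|A'_j|\le|\tilde{A}|$ for each $j$.

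To finish, use property (4) of Theorem~\ref{T-Q3Main}, which asserts $|A|=\Theta\bigl(\sum_{j}|A'_j|\bigr)$. Combining this with the per-piece bound and $y\le\log m$ gives
\[
|A|\;=\;O\!\left(\sum_{j=1}^{y}|A'_j|\right)\;\le\;O(y\cdot|\tilde{A}|)\;=\;O(|\tilde{A}|\log m),
\]
and merging this with the easy regime yields $\forb(m,\mathcal{F})=O(\max\set{|\tilde{A}|,m}\log m)$ as claimed.

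The argument is almost entirely a repackaging of Theorem~\ref{T-Q3Main}, so there is no substantive obstacle. The one point that needs care is the precise correspondence between the ``requirements of the $A'_j$ matrices'' referenced in the statement of the corollary and the conditions (2)--(3) produced by Theorem~\ref{T-Q3Main}; once one confirms that $\tilde{A}$ is defined relative to exactly these structural requirements (for some choice of $k$, viewed as a parameter allowed to vary over the admissible $k_j$), the inequality $|A'_j|\le|\tilde{A}|$ is a tautology and the rest is bookkeeping.
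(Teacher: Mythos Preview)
Your proof is correct and follows essentially the same approach as the paper: split into the trivial regime $\forb(m,\mathcal{F})=O(m\log m)$ versus the regime where Theorem~\ref{T-Q3Main} applies, observe that each $A'_j\prec A$ lies in $\Avoid(m,\mathcal{F})$ and hence has $|A'_j|\le|\tilde{A}|$, and then sum using $y\le\log m$ and property~(4). The paper's argument is the same, just more terse.
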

\begin{proof}
    The statement certainly holds if $\forb(m,\mathcal{F})=O(m\log m)$.  Assume $\forb(m,\mathcal{F})=\omega(m\log m)$.  Then if $A$ is a maximum sized matrix in $\Avoid(m,\mathcal{F})$ we can apply Theorem~\ref{T-Q3Main} to get a set of configurations $A'=\set{A_j'}$ with $|A_j'|\le |\tilde{A}|$ for all $j$ (as necessarily $A'_j\in \Avoid(m,\mathcal{F})$ since $A'_j\prec A\in \Avoid(m,\mathcal{F})$), and we have $|A|=O(\sum |A'_j|)$ or $|A|=O(|\tilde{A}|\log m)$.
\end{proof}

We suspect that the statement of Corollary~\ref{C-log bound} can be strengthened to $O(\max\set{|\tilde{A}|,m})$, but as stated the Corollary can still be used to prove near optimal results.  It is possible to get tighter upper bounds for certain configurations by using some of the additional structure provided by Theorem~\ref{T-Q3Main}.

\begin{thm}\label{T-Q3Upper}
    If $s\le r$ then $\forb(m,Q_3(t),I_r\times I_s^c)=O(m^{2-1/s})$.
\end{thm}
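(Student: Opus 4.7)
The plan is to apply Theorem~\ref{T-Q3Main} to decompose an extremal $A \in \Avoid(m, \{Q_3(t), I_r \times I_s^c\})$ into sub-configurations $A_1', \ldots, A_y'$.  If $|A| = O(m \log m)$ we are done; otherwise, $|A| = \Theta(\sum_j |A_j'|)$ by the theorem, so it suffices to bound each $|A_j'|$ in terms of $k_j$ and sum.  Fix $j$ and write $k = k_j$, $C_i = C_i^j$.  Thus $A_j'$ has $k$ special rows whose restriction to the columns is $I_k$, partitioning the columns into blocks $C_1, \ldots, C_k$; on each block $C_i$, every non-special row is identically $0$, identically $1$, or sparse, and every column of $C_i$ is identified by a sparse row.

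Call a pair $(i, R)$ with $R$ an $s$-subset of non-special rows \emph{good} if $R$ is the row set of an $I_s^c$ sub-configuration on some $s$ columns of $C_i$; equivalently, if $R$ admits an induced matching of size $s$ in the bipartite $0$-graph between sparse rows and $C_i$.  The key technical step is to show that, provided $|C_i|$ exceeds a constant $L_0 = L_0(s,t)$, the number of good $R$'s with $i$ fixed is $\Omega(|C_i|^s)$.  For $s = 2$ this is immediate: the maximal elements of the poset $(\{Z_r(i) : r \text{ sparse on } C_i\}, \subseteq)$ form an antichain $\mathcal{M}$ whose union is $C_i$, so $|\mathcal{M}| \ge |C_i|/(t-1)$; any two distinct maximal elements are incomparable, giving $\binom{|\mathcal{M}|}{2} = \Omega(|C_i|^2)$ pairs, each producing an induced matching of size $2$.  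For general $s$, one invokes the Sunflower Lemma on $\{Z_r(i)\}$ (a family of subsets of size at most $t-1$): a non-degenerate sunflower of size $s$ furnishes an induced matching via distinct petal elements, and a more careful counting produces $\Omega(|C_i|^s)$ such sunflowers.  Blocks with $|C_i| \le L_0$ contribute only $O(L_0 k) = O(m)$ columns in total.

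By the avoidance of $I_r \times I_s^c$, each $s$-subset $R$ of non-special rows can be good for at most $r - 1$ indices $i$: given $r$ good pairs $(i_1, R), \ldots, (i_r, R)$, the special rows $i_1, \ldots, i_r$ together with $R$ and the $s$ columns chosen in each $C_{i_l}$ assemble into a copy of $I_r \times I_s^c$.  Hence the total count of good pairs is at most $(r-1)\binom{m}{s} = O(m^s)$.  Combining this with the lower bound from the previous paragraph and the power-mean inequality $\sum_i |C_i|^s \ge |A_j'|^s / k^{s-1}$ yields $|A_j'| = O(m \cdot k_j^{1-1/s})$.  Using $k_{j+1} \le k_j/2$, the sum $\sum_j k_j^{1-1/s}$ is a convergent geometric series, so $\sum_j |A_j'| = O(m \cdot k_1^{1-1/s}) = O(m^{2-1/s})$, and therefore $|A| = O(m^{2-1/s})$.

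The main obstacle is the lower bound $|G_i| = \Omega(|C_i|^s)$ for $s \ge 3$.  For $s = 2$, pairwise incomparability of $Z$-sets suffices, but for $s \ge 3$ one needs $\{Z_{r_1}, \ldots, Z_{r_s}\}$ to be an \emph{independent} family (no $Z_{r_j}$ contained in $\bigcup_{l \ne j} Z_{r_l}$), which is strictly stronger than being a size-$s$ antichain (e.g.\ $\{1,2\}, \{2,3\}, \{1,3\}$ is an antichain but not independent).  The Sunflower-based count must therefore be engineered to extract $\Omega(|C_i|^s)$ non-degenerate sunflowers, rather than merely the single one guaranteed by the classical Sunflower Lemma.
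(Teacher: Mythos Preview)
Your overall framework matches the paper's: apply Theorem~\ref{T-Q3Main}, bound each $|A_j'|$ by $O(m\,k_j^{1-1/s})$, and sum the geometric series. The difference is in how that per-block bound is obtained, and your version leaves the hard step unfinished.

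The paper does \emph{not} count good $s$-subsets. Instead, it proves Lemma~\ref{L-avoiding rows}: from the sparse rows of each $C_i$ one can extract a subset $R_i$ with $|R_i|\ge 2^{2-t}|C_i|$ whose restriction to $C_i$ is an $I_{|R_i|}^c$. (The proof is a short recursive partition of the columns, not a sunflower argument.) With such $R_i$ in hand, the paper builds the bipartite graph on $\{1,\dots,k_j\}$ versus the non-special rows with edges $i\sim r$ for $r\in R_i$; a $K_{r,s}$ in this graph with the $s$ side among the rows produces an $I_r\times I_s^c$ in $A$ (any $s$ rows of $R_i$ already form an $I_s^c$ on $C_i$). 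K\H ov\'ari--S\'os--Tur\'an then gives $\sum_i|R_i|=O(m\,k_j^{1-1/s})$, hence $|A_j'|=O(m\,k_j^{1-1/s})$.

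Your counting of good $s$-subsets is essentially the K\H ov\'ari--S\'os--Tur\'an double count unrolled, and it would go through \emph{once} you know $\Omega(|C_i|^s)$ good $s$-sets per block---but that is exactly the step you flag as the obstacle. The Sunflower Lemma gives a single sunflower, and ``engineering'' $\Omega(|C_i|^s)$ of them is not straightforward (your own $\{1,2\},\{2,3\},\{1,3\}$ example shows antichains alone do not suffice for $s\ge 3$). The paper's Lemma~\ref{L-avoiding rows} is precisely what fills this gap: once you have an $I_{|R_i|}^c$ on $\Theta(|C_i|)$ rows, \emph{every} $s$-subset of $R_i$ is good, giving $\binom{|R_i|}{s}=\Omega(|C_i|^s)$ immediately. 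So either invoke that lemma to justify your count, or---more simply---skip the $s$-subset counting altogether and argue via the bipartite graph as the paper does.
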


\begin{proof}
    We first prove this for the case $t=2$.  Let $A\in \Avoid(m,Q_3(2),I_r\times I_s^c)$ with $|A|=\omega(m\log m)$ and let $A'$ be the corresponding set obtained from Theorem \ref{T-Q3Main}.   We focus our attention on bounding $|A'_1|$.  Note that restricted to $C_i^1$, there must exist $|C_i^1|$ rows that are distinct rows of $I_{|C_i^1|}^c$ (one to identify each column of $C_i^1$).  Denote a set of such rows by $R_i$.  If there exists a set of integers $\set{i_1,\ldots,i_r}$ such that $|R_{i_1}\cap \cdots\cap R_{i_r}|\ge s$, then by taking these $s$ rows, the rows $i_1,\ldots,i_r$ and the relevant columns we can find an $I_r\times I_s^c$ in $A'_1$ (since we have an $I_s^c$ occurring simultaneously under $r$ different $I_{k_1}$ columns).  How large can $|A_1'|=\sum |C_i^1|$ be given this restriction?
    
    We rephrase this problem in terms of graph theory.  We form a bipartite graph $G(C,R)$ where $v_i\in C$ for $1\le i\le k_1$ corresponding to the $C_i^1$ columns, and $r\in R$ corresponding to each row below the first $k_1$ rows.  $G$ will contain the edge $v_ir$ iff $r\in R_i$.  Our restriction of no set $\set{i_1,\ldots,i_r}$ such that $|R_{i_1}\cap \cdots\cap R_{i_r}|\ge s$ means that $G$ does not contain a $K_{r,s}$, the complete bipartite graph with vertex sets of size $r$ and $s$, with the $r$ vertices coming from $C$ and the $s$ vertices coming from $R$.  Using standard arguments from extremal graph theory, this graph can have at most $c|R||C|^{1-1/s}+d|C|\le cmk_1^{1/s}+dk_1$ edges for some constants $c$ and $d$.  Hence in total we have that
    \[
        \sum |A'_i|\le \sum (cm k_i^{1-1/s}+dk_i)\le cm k_1^{1-1/s}\sum \left(\frac{1}{2}\right)^{i(1-1/s)}+dk_1\sum \left(\half\right)^i=O(m^{2-1/s}), 
    \]
    and thus this is an asymptotic upper bound for $|A|=\Theta(\sum |A_i'|)$.
    
	We wish to generalize this argument for arbitrary $t$.  The key idea is that for each set $C_i^j$ we must find a set of rows $R_i^j$ with $|R_i^j|=\Theta_t(|C_i^j|)$ and such that $R_i^j$ contains an $I_{|R_i^j|}^c$.  Once we have this, we can perform the same graph argument on these $R_i^j$ rows as we did for the $R_i$ rows above and get the same asymptotic results.  The following lemma accomplishes this goal by taking $B=C_i^j$ after ignoring rows that are identically 0.

\end{proof}
\begin{lem}\label{L-avoiding rows}
	Given an integer $t$, let $B$ be a matrix consisting of rows with fewer than $t$ 0's such that every column of $B$ has a 0 in some row.  Then there exists a set of rows $R$ of $B$ such that:
	\begin{enumerate}
		\item $R$ contains an $I_{|R|}^c$.
		\item $|R|\ge 2^{2-t}|B|$.
	\end{enumerate}
\end{lem}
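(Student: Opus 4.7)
The strategy is to induct on $t$ with an inner induction on $|B|$ (the number of columns). For the base case $t=2$, every row of $B$ has at most one zero, so for each column $c$ we may pick a row $r_c$ having a 0 in $c$; since a row has at most one zero, the map $c \mapsto r_c$ is injective, and $R=\{r_c\}$ yields an induced matching of size $|B|$, which matches $2^{2-2}|B|=|B|$.

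For the inductive step, assume the lemma for $t-1$ and for all matrices with fewer columns. First run a \emph{cleaning phase} on $B$: while some row $r$ with $|Z_r|=t-1$ zeros has the property that deleting $r$ leaves every column still containing at least one 0, delete $r$. When the phase halts, either (a) no remaining row has $t-1$ zeros, so every row has at most $t-2$ zeros, or (b) every remaining row with $t-1$ zeros is \emph{essential}, meaning its deletion would leave some column with no 0.

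In case (a), the cleaned matrix still has $|B|$ columns and now satisfies the hypothesis of the lemma with $t$ replaced by $t-1$, so the inductive hypothesis gives $|R| \ge 2^{2-(t-1)}|B| = 2\cdot 2^{2-t}|B|$, which is more than enough. In case (b), pick an essential row $r^*$ with $|Z_{r^*}|=t-1$, and let $c^* \in Z_{r^*}$ be a column whose only 0 lies in $r^*$ (such a $c^*$ exists because $r^*$ is essential). Match $r^*$ to $c^*$ and recurse on the submatrix $B'$ obtained by deleting row $r^*$ together with the $t-1$ columns of $Z_{r^*}$. The remaining $|B|-(t-1)$ columns are unaffected (their zero-rows do not include $r^*$), so the inductive hypothesis applies to $B'$ and yields a strong matching of size at least $2^{2-t}(|B|-(t-1))$. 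Adjoining $(r^*,c^*)$ preserves strongness: each column $c_r$ chosen inside $B'$ lies outside $Z_{r^*}$, and $c^*$ has a 1 in every row besides $r^*$. Combining yields $|R|\ge 1+2^{2-t}(|B|-(t-1))\ge 2^{2-t}|B|$, where the last inequality reduces to the elementary bound $2^{t-2}\ge t-1$, valid for all $t\ge 2$.

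The main obstacle will be setting up the cleaning phase so that, whenever we land in case (b), the essentiality of $r^*$ genuinely supplies a column whose \emph{sole} zero is in $r^*$. This is exactly what lets us ``pay'' for discarding the $t-1$ columns of $Z_{r^*}$ with the single row we add to $R$, which is in turn what makes the exponential factor $2^{2-t}$ tight across the recursion.
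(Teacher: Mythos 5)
Your proof is correct, and it takes a genuinely different route from the paper's. The paper's argument is a greedy column partition: repeatedly pull the leftmost remaining column into $B_1$ and move into $B_2$ every remaining column that shares a zero-row with it. By construction no row has two zeros among $B_1$, so if $|B_1|\ge\frac{1}{2}|B|$ the $t=2$ reasoning applies directly; otherwise $|B_2|\ge\frac{1}{2}|B|$ and the paper recurses on $B_2$ at parameter $t-1$, asserting that every row has at most $t-2$ zeros within $B_2$. Your argument instead prunes rows: delete every row with exactly $t-1$ zeros whose removal leaves all columns still covered; if no row with $t-1$ zeros survives, decrement $t$; otherwise pick an essential such row $r^*$ and a column $c^*$ whose sole zero sits in $r^*$, delete $r^*$ together with the $t-1$ columns of $Z_{r^*}$, recurse at the same $t$ on the smaller matrix, and augment the recovered $I^c$ by the pair $(r^*,c^*)$. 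The paper's halving step produces the factor $2^{2-t}$ with no algebra, while your augmentation step has to pay for the $t-1$ discarded columns with one gained row, so you need the extra (but elementary) inequality $2^{t-2}\ge t-1$. On the other hand, your essential-row approach avoids a subtlety in the paper's $B_2$ case: the claim that each row has at most $t-2$ zeros inside $B_2$ does not obviously follow, since a row $r$ can have all of its zeros in $B_2$-columns and none in $B_1$ (each such column may have been swept into $B_2$ via a \emph{different} witnessing row and a different $B_1$-column). Your argument never needs to control zeros of a fixed row across two halves of a column partition, so it sidesteps this issue entirely.
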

\begin{proof}
	The $t=2$ case is obvious (for every column take a row that has a 0 in the column), so inductively assume the statement holds up to $t-1$.  We wish to partition the columns of $B$ into two sets, $B_1$ and $B_2$.  Remove the leftmost column $c$ of $B$ and add it to $B_1$, and remove all columns $c'$ of $B$ where there exists a row $r$ such that $r$ has a 0 in both column $c$ and column $c'$ and add these columns to $B_2$.  Repeat this process until every column of $B$ is in one of these sets, and note that $B_i\ge \half |B|$ for some $i$.  Note that as every column of $B$ was identified, every column of $B_1$ and $B_2$ is also identified.
	
	If $B_1\ge \half |B|$, then note that no row $r$ has more than one 0 in $B_1$ (if $r$ had 0's in $c,c'\in B_1$ with $c$ to the left of $c'$, then $c'$ should have been added to $B_2$), so by the $t=2$ case we can find a set $R$ with $|R|=|B_1|\ge \half |B|$ that contains an $I_{|R|}^c$.
	
	If $|B_2|\ge \half |B|$, then note that $B_2$'s rows all have at most $t-2$ 0's (as every row with a 0 in some $c'$ originally had a 0 in the corresponding $c$ column from $B_1$), so by the inductive hypothesis we can find a set $R$ with $|R|\ge 2^{2-(t-1)}|B_2|\ge2^{2-t}|B|$ that contains an $I_{|R|}^c$.
\end{proof}

We can use the graph idea from the proof of Theorem~\ref{T-Q3Upper} to achieve lower bounds as well.
\begin{thm}\label{T-Q3Lower}
    $\forb(m,Q_3(t),I_r\times I_s^c)=\Omega(ex(m,K_{r,s}))$.
\end{thm}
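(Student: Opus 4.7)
The plan is to build an $m$-rowed simple matrix $A$ with $\Omega(\mathrm{ex}(m, K_{r,s}))$ columns that avoids both $Q_3(t)$ and $I_r \times I_s^c$, using a bipartite $K_{r,s}$-free graph as the blueprint. Starting from a bipartite graph $G = (C \sqcup R, E)$ on $m$ vertices with $|E| = \Omega(\mathrm{ex}(m, K_{r,s}))$ and no copy of $K_{r,s}$, I would let $A$ have rows indexed by $C \cup R$ and, for each edge $e = cr \in E$, one column that is $1$ at row $c$ with $0$'s on the other $C$-rows, and $0$ at row $r$ with $1$'s on the other $R$-rows. The matrix is simple since each column uniquely encodes its edge. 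To check that $A$ avoids $Q_3(t)$, I would case on the types of two chosen rows: two $C$-rows admit no $(1,1)$-column (since each column has a unique $1$ in the $C$-part), two $R$-rows dually admit no $(0,0)$-column, and a mixed pair $(c, r)$ admits at most one $(1,0)$-column (from the edge $cr$ itself, if present), whereas $Q_3(t)$ requires $t \ge 2$ such columns.

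The main work is to show $A$ avoids $I_r \times I_s^c$. Suppose toward a contradiction it appears on $r + s$ chosen rows $C' \sqcup R'$ (with $|C'| = \alpha$) using $rs$ distinct columns. Every column of $I_r \times I_s^c$ has exactly $r$ zeros, and the column of $A$ for edge $cr$ contributes $\alpha - [c \in C'] + [r \in R']$ zeros to these rows, so $\alpha \in \{r - 1, r, r + 1\}$ is forced. In the off-canonical cases $\alpha = r \pm 1$, the qualifying columns are parameterized by a single active endpoint inside $R'$ or $C'$, yielding at most $s + 1$ or $r + 1$ distinct patterns respectively --- strictly less than $rs$ for $r, s \ge 2$. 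In the canonical case $\alpha = r$, a qualifying column arises either from an internal edge of $G[C', R']$ (one distinct pattern per edge) or from an external edge (all contributing the single pattern $(0_r, 1_s)$). Producing $rs$ distinct columns thus forces either $K_{r,s} \subseteq G[C', R']$, which is ruled out by hypothesis, or $|E(G[C', R'])| = rs - 1$ together with one external column --- a subcase I would rule out by a row-sum computation: the unique $C'$-vertex $c_0$ missing from the $rs - 1$ internal edges has row sum $s - 1$, which is not among the two values $s$ or $r(s-1)$ appearing in the row-sum multiset of $I_r \times I_s^c$.

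The subcase of $rs - 1$ internal edges filled in by one external column is the main technical obstacle, since pattern counting gives exactly $rs$ candidate columns and cannot exclude it directly; the row-sum multiset argument is what closes the gap. Once $I_r \times I_s^c$-avoidance is established, the bound $\forb(m, Q_3(t), I_r \times I_s^c) \ge |A| = \Omega(\mathrm{ex}(m, K_{r,s}))$ follows immediately.
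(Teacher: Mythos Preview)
Your construction is exactly the paper's: $A = I_{|C|}\times_G I_{|R|}^c$ for a bipartite $K_{r,s}$-free graph $G$ on $m$ vertices. Your $Q_3(t)$-avoidance argument is a direct unpacking of the paper's one-line citation that $A$ is a submatrix of $I\times I^c$, so no real difference there.

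Where you diverge is in proving $I_r\times I_s^c$-avoidance. The paper first argues that the $r+s$ chosen rows must split cleanly as $I_r$-block $\subseteq C$-rows and $I_s^c$-block $\subseteq R$-rows (using that any $I_r$-row/$I_s^c$-row pair has both a $(1,1)$-column and a $(0,0)$-column, while two $C$-rows never have $(1,1)$ and two $R$-rows never have $(0,0)$). Given this block separation, it then \emph{complements the $R$-part}: passing from $A$ to $B = I_{|C|}\times_G I_{|R|}$ turns the hypothetical $I_r\times I_s^c$ into an $I_r\times I_s$, i.e., the incidence matrix of $K_{r,s}$ inside the incidence matrix of $G$ --- an immediate contradiction. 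Your route instead counts zeros to pin down $\alpha\in\{r-1,r,r+1\}$, dispatches the off-canonical cases by pattern-counting, and handles the critical $\alpha=r$, $rs-1$-internal-edges case with a row-sum obstruction. Your argument is correct and fully self-contained; the paper's complementation trick is slicker and sidesteps the delicate subcase entirely, but it leans on the block-separation claim whose full justification (the symmetric $(0,0)$-argument for $R$-rows) is left implicit. Each approach has its merits: yours is more elementary and transparent, the paper's is more conceptual.
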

\begin{proof}
    We define a generalized product operation for matrices. Let $A$ and $B$ be simple matrices with $m_1$ and $m_2$ rows respectively and $G=G(C_A,C_B)$ a bipartite graph with the vertex set $C_A$ corresponding to the set of columns of $A$ and $C_B$ to the set of columns of $B$.  We define $A\times_G B$ to be the simple matrix on $m_1+m_2$ rows such that it contains the column defined by placing the column $a\in C_A$ on the column $b\in C_B$ iff $ab\in E(G)$.  Thus $|A\times_G B|=|E(G)|$.
    
    Let $G(V,W)$ be a bipartite graph  on $m$ vertices such that $G$ avoids $K_{r,s}$ and such that  $G$ has the maximum number of edges.  Note that using the probabilistic method it is easy to show that $|E(G)|\ge \half ex(m,K_{r,s})$.  We claim that $A=I_{|V|}\times_G I^c_{|W|}\in \Avoid(m,Q_3(t),I_r\times I_s^c)$, and hence $\forb(m,Q_3(t),I_r\times I_s^c)\ge \half ex(m,K_{r,s})$.  We certainly have $Q_3(t)\nprec A$ as $A$ is a sub-matrix of $I_{a}\times I_a^c$ for $a=\max\set{|V|,|W|}$, which avoids $Q_3(t)$.  Note that if $I_r\times I_s^c\prec A$ Then we must have all of the $I_r$ rows coming entirely from either the $I_{|V|}$ rows of $A$ or the $I_{|W|}^c$ rows and the $I_s^c$ rows coming entirely from the other.  Indeed, no two rows of the $I_{|V|}$ block of $A$ contains a column of two 1's, but every row of $I_r$ in $I_r\times I_s^c$ together with a row of $I_s^c$ contains a column of two 1's, so the $I_{|V|}$ rows can contribute to at most one of these blocks. Further note that if $s\ge 3$ then the $I_s^c$ must come from the $I_{|W|}^c$ block (as it needs a column with two 1's), and similarly if $r\ge 3$ then $I_r$ must come from the $I_{|V|}$ block (and hence again the $I_s^c$ must come from the $I_{|W|}^c$ block).  
    
    Now consider $B=I_{|V|}\times_G I_{|W|}$.  If $I_r\times I_s^c\prec A$ then we certainly have $I_r\times I_s\prec B$ (if $s$ or $r$ were at least 3 then the $I_s^c$ must have been in $I_{|W|}^c$ and then complimented to become an $I_s$, and if $s=r=2$ complimenting either block would still leave you with an $I_2\times I_2$).  But $I_{|V|}\times_G I_{|W|}$ is the incidence matrix of $G$, a graph that avoids $K_{r,s}$, and hence it must avoid $I_r\times I_s$, the incidence matrix of $K_{r,s}$.  Thus we could not have had $I_r\times I_s^c\prec A$.

\end{proof}

It is known that $ex(m,K_{r,s})=\Theta(m^{2-1/s})$ for $(s-1)!\le r$, so for these values of $s$ and $r$ our bounds from Theorems \ref{T-Q3Upper} and \ref{T-Q3Lower} are sharp.  In particular, because $F_{11}=I_2\times I_2=I_2\times I_2^c$, we have the following result.

\begin{cor}\label{Q3F11}
$\forb(m,Q_3,F_{11})=\Theta(m^{3/2})$.
\end{cor}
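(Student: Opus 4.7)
The plan is to extract this corollary as a direct consequence of Theorems~\ref{T-Q3Upper} and~\ref{T-Q3Lower}, once the right parameters are identified. The key observation is that $F_{11}=I_2\times I_2$, and because $I_2^c$ is obtained from $I_2$ by swapping its two columns, $I_2\times I_2^c$ has the same column set as $I_2\times I_2$. Hence, as configurations, $F_{11}=I_2\times I_2=I_2\times I_2^c$, and the problem is precisely the special case $t=r=s=2$ of the bounds on $\forb(m,Q_3(t),I_r\times I_s^c)$.

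For the upper bound, I would apply Theorem~\ref{T-Q3Upper} with $t=r=s=2$ to obtain
\[
\forb(m,Q_3,F_{11}) \;=\; \forb(m,Q_3(2),I_2\times I_2^c) \;=\; O(m^{2-1/2}) \;=\; O(m^{3/2}).
\]
For the matching lower bound, I would invoke Theorem~\ref{T-Q3Lower} with the same parameters, which gives $\forb(m,Q_3,F_{11})=\Omega(ex(m,K_{2,2}))$. The classical K\H ov\'ari--S\'os--Tur\'an bound together with the standard projective-plane construction yields $ex(m,K_{2,2})=\Theta(m^{3/2})$, in line with the remark preceding the corollary that $ex(m,K_{r,s})=\Theta(m^{2-1/s})$ whenever $(s-1)!\le r$ (here $1\le 2$). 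Combining these two estimates gives the desired $\Theta(m^{3/2})$.

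The main obstacle, such as it is, has already been handled upstream: the stability theorem (Theorem~\ref{T-Q3Main}) that powers the upper bound, and the generalized-product construction used in Theorem~\ref{T-Q3Lower} that supplies the lower bound, contain all the substantive content. The only verification needed here is the identification $F_{11}=I_2\times I_2^c$, which is an immediate column-permutation observation. Thus the proof amounts to nothing more than invoking these two theorems with $t=r=s=2$ and recording that $ex(m,K_{2,2})=\Theta(m^{3/2})$.
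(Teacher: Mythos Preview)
Your proposal is correct and matches the paper's own argument essentially verbatim: identify $F_{11}=I_2\times I_2=I_2\times I_2^c$, apply Theorems~\ref{T-Q3Upper} and~\ref{T-Q3Lower} with $t=r=s=2$, and use $ex(m,K_{2,2})=\Theta(m^{3/2})$. There is nothing to add.
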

\section{Avoiding $1_{k,\ell}$}
In this section we study the identically 1 matrices $1_{k,\ell}$.  We first note an immediate consequence of Theorem~\ref{thm:BB}.

\begin{cor}\label{1BB}
    $\forb(m,1_{k,\ell},F)=\Theta(1)$ for $F=I_3,\ F_{10},$ or $0_{k,\ell}$.
\end{cor}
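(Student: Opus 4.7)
The plan is to reduce each of the three cases to Theorem~\ref{thm:BB} by finding, for each $F \in \set{I_3, F_{10}, 0_{k,\ell}}$, an integer $K$ (depending on $k$ and $\ell$) such that each of $I_K$, $T_K$, $I_K^c$ is a sub-configuration of either $1_{k,\ell}$ or $F$. Once that is established, any $A \in \Avoid(m, \set{1_{k,\ell}, F})$ automatically lies in $\Avoid(m, \set{I_K, T_K, I_K^c})$, and Theorem~\ref{thm:BB} gives $|A| \le BB(K) = \Theta(1)$, together with the trivial lower bound $\forb(m,1_{k,\ell},F) \ge 1$.

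First I would handle the two configurations that $1_{k,\ell}$ itself swallows, namely $T_K$ and $I_K^c$. For $T_K$, the last $\ell$ columns of $T_K$ all carry a $1$ in each of the first $K-\ell+1$ rows, so those rows and columns form a $1_{K-\ell+1,\ell}$, giving $1_{k,\ell} \prec T_K$ as soon as $K \ge k+\ell-1$. For $I_K^c$, any choice of $k$ rows and $\ell$ disjoint columns yields an all-ones $k \times \ell$ block, so $1_{k,\ell} \prec I_K^c$ whenever $K \ge k+\ell$.

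It then remains to verify, in each of the three cases, that $F \prec I_K$ for $K$ large. For $F = I_3$ this is immediate for $K \ge 3$. For $F = F_{10}$, choose any three columns of $I_K$ with $K \ge 4$: their three $1$-entries sit in three distinct rows, producing an $I_3$, while any remaining row supplies the all-zero fourth row of $F_{10}$. For $F = 0_{k,\ell}$, pick any $\ell$ columns of $I_K$ together with $k$ rows disjoint from the $\ell$ indices where those columns have a $1$, which is possible for $K \ge k+\ell$ and produces a $k \times \ell$ all-zero block.

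Taking $K = \max\set{4,\, k+\ell}$ therefore works uniformly, and Theorem~\ref{thm:BB} finishes all three cases. There is no real obstacle here beyond the small bookkeeping step of checking the containment $F \prec I_K$ case by case; the heavy lifting is already done by Theorem~\ref{thm:BB}.
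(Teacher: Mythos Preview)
Your proof is correct and follows essentially the same approach as the paper: both arguments observe that $1_{k,\ell}\prec T_K,\,I_K^c$ and that each choice of $F$ satisfies $F\prec I_K$ for suitable $K$, then invoke Theorem~\ref{thm:BB}. The paper is terser (simply taking $K=k+\ell$ and noting $I_3,F_{10}\prec I_4$), whereas you give a slightly more careful uniform choice $K=\max\{4,k+\ell\}$, but the substance is identical.
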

\begin{proof}
    Note that $1_{k,\ell}\prec T_{k+\ell},I^c_{k+\ell}$ and that $I_3,F_{10}\prec I_4$ and $0_{k,\ell}\prec I_{k+\ell}$.  We thus have an upper bound of $BB(k+\ell)$ by Theorem~\ref{thm:BB}.
\end{proof}

We next consider a slight generalization of a result from \cite{AK}.

\begin{thm}\label{T-ell=1}
	Let $F$ be the incidence matrix of a $(k-1)$-uniform hypergraph $\mathcal{H}$.  Then \[\forb(m,1_{k,1},F)={m\choose 0}+{m\choose 1}+\cdots +{m\choose k-2}+ex^{(k-1)}(m,\mathcal{H})\]
\end{thm}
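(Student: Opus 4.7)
The proof splits naturally into a matching upper bound and lower bound, with the key translation being that avoiding $1_{k,1}$ forces the hypergraph corresponding to $A$ to have rank at most $k-1$, and columns of weight exactly $k-1$ are the only ones that can contribute to a copy of $F$.

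For the \textbf{lower bound}, I would take $A$ to be the $m$-rowed matrix whose columns consist of (i) all $(0,1)$-columns of weight at most $k-2$, contributing $\sum_{i=0}^{k-2}\binom{m}{i}$ columns, together with (ii) the incidence matrix of a $(k-1)$-uniform $\mathcal{H}$-free hypergraph on $m$ vertices achieving $ex^{(k-1)}(m,\mathcal{H})$ edges. Since every column has weight at most $k-1$, clearly $1_{k,1}\nprec A$. For the $F$-avoidance, observe that every column of $F$ has weight exactly $k-1$ (as $F$ is the incidence matrix of a $(k-1)$-uniform hypergraph). If $F\prec A$ were witnessed by a set of $v(\mathcal{H})$ rows of $A$ and some columns, then the chosen columns of $A$ would each contain $k-1$ ones in the chosen rows. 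Since every column of $A$ has total weight $\le k-1$, those columns have \emph{exactly} weight $k-1$ with all ones inside the chosen rows, so they must come from part (ii). But part (ii) is $\mathcal{H}$-free, so no such copy of $F$ exists.

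For the \textbf{upper bound}, let $A\in\Avoid(m,1_{k,1},F)$. Since $1_{k,1}\nprec A$, each column has weight $\le k-1$. Partition the columns of $A$ into $A''$ (columns of weight at most $k-2$) and $A'$ (columns of weight exactly $k-1$). Trivially $|A''|\le \sum_{i=0}^{k-2}\binom{m}{i}$. The columns of $A'$ are the characteristic vectors of edges of a $(k-1)$-uniform hypergraph $\mathcal{G}$ on $m$ vertices; I claim $\mathcal{G}$ is $\mathcal{H}$-free. Indeed, a copy of $\mathcal{H}$ in $\mathcal{G}$ uses $v(\mathcal{H})$ vertices of $[m]$ and $e(\mathcal{H})$ edges; taking these $v(\mathcal{H})$ rows of $A$ and the $e(\mathcal{H})$ corresponding columns of $A'$, the submatrix restricted to those rows is exactly $F$ up to row/column permutation (the restriction equals the full columns because the $k-1$ ones of each such column lie entirely in the chosen rows, forced by the weight constraint). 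This contradicts $F\nprec A$, so $|A'|\le ex^{(k-1)}(m,\mathcal{H})$.

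The proof is routine once the translation is set up correctly; there is no real obstacle. The only subtlety worth stating explicitly is the interplay between the two forbidden configurations: the weight cap from $1_{k,1}$ is what guarantees that columns matching columns of $F$ have \emph{all} of their ones inside the selected $v(\mathcal{H})$ rows, which is precisely what allows us to identify the weight-$(k-1)$ columns of $A$ with the edges of an $\mathcal{H}$-free hypergraph. Without the $1_{k,1}$ constraint, a weight-$(k-1)$ pattern in selected rows could be padded by arbitrary ones outside, and the correspondence with $ex^{(k-1)}(m,\mathcal{H})$ would break.
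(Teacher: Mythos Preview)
Your proof is correct and follows essentially the same approach as the paper's own proof: the lower bound construction (all low-weight columns plus an extremal $\mathcal{H}$-free incidence matrix) and the upper bound argument (partition by weight, identify the weight-$(k-1)$ columns with an $\mathcal{H}$-free hypergraph) are identical to the paper's, with your version supplying more of the routine justification that the paper leaves implicit.
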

\begin{proof}
	As a lower bound one can take all columns with fewer than $k-1$ 1's, along with the incidence matrix of a maximum $(k-1)$-uniform $\mathcal{H}$ avoiding hypergraph.  For an upper bound, note that one can have at most ${m\choose 0}+\cdots+{m\choose k-2}$ columns with fewer than $k-1$ 1's, and the columns with weight $k-1$ define the incidence matrix of a $(k-1)$-uniform hypergraph that avoids $\mathcal{H}$, and hence can be no larger than $ex^{(k-1)}(m,H)$.
\end{proof}
\begin{cor}\label{1-1F11}
	\[\forb(m,1_{k,1},I_{s_1}\times\cdots I_{s_{k-1}})={m\choose 0}+\cdots+{m\choose k-2}+ex(m,K^{(k-1)}(s_1,\ldots,s_{k-1})).\]  In particular, $\forb(m,1_{3,1},F_{11})=1+m+ex(m,K_{2,2})=\Theta(m^{3/2})$.
\end{cor}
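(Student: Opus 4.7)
The plan is to derive both statements as direct consequences of Theorem~\ref{T-ell=1}, so the work is essentially recognizing that the configuration $I_{s_1}\times\cdots\times I_{s_{k-1}}$ is precisely the incidence matrix of the complete multipartite hypergraph $K^{(k-1)}(s_1,\ldots,s_{k-1})$.

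First I would verify this identification. By the definition of the product operation, each column of $I_{s_1}\times\cdots\times I_{s_{k-1}}$ is obtained by stacking one column from each $I_{s_i}$, so every column has weight exactly $k-1$, with exactly one $1$ in each of the $k-1$ row-blocks of sizes $s_1,\ldots,s_{k-1}$. If we regard the $s_i$ rows of the $i$-th block as the $i$-th part of a vertex set, then the columns run over all $(k-1)$-tuples of vertices taking one vertex from each part, i.e.\ exactly the edges of $K^{(k-1)}(s_1,\ldots,s_{k-1})$.

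With this identification in hand, Theorem~\ref{T-ell=1}, applied to the $(k-1)$-uniform hypergraph $\mathcal H = K^{(k-1)}(s_1,\ldots,s_{k-1})$, immediately yields
\[
\forb(m,1_{k,1},I_{s_1}\times\cdots\times I_{s_{k-1}}) = \binom{m}{0}+\binom{m}{1}+\cdots+\binom{m}{k-2}+ex^{(k-1)}\!\bigl(m,K^{(k-1)}(s_1,\ldots,s_{k-1})\bigr),
\]
which is the first assertion (using the convention $ex = ex^{(k-1)}$ in the corollary's statement).

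For the special case, I would specialize to $k=3$, $s_1=s_2=2$. Then $I_2\times I_2 = F_{11}$ (the incidence matrix of $K_{2,2}$), and the formula becomes $\binom{m}{0}+\binom{m}{1}+ex(m,K_{2,2}) = 1+m+ex(m,K_{2,2})$. The asymptotic $\Theta(m^{3/2})$ then follows from the classical Kővári–Sós–Turán theorem, which gives $ex(m,K_{2,2}) = \Theta(m^{3/2})$. There is no real obstacle here beyond correctly matching the product construction to its hypergraph interpretation; the heavy lifting was done in Theorem~\ref{T-ell=1}.
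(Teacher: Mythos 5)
Your argument is correct and is exactly the intended one: identify $I_{s_1}\times\cdots\times I_{s_{k-1}}$ as the incidence matrix of $K^{(k-1)}(s_1,\ldots,s_{k-1})$, plug into Theorem~\ref{T-ell=1}, and then specialize to $k=3$, $s_1=s_2=2$ with K\H ov\'ari--S\'os--Tur\'an for the $\Theta(m^{3/2})$ asymptotics.
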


We can get similar results when considering configurations of the form $1_{k,2}$.
\begin{thm}\label{T-ell=2}
	Let $F$ be the incidence matrix of a $k$-uniform complete $r$-partite hypergraph $\mathcal{H}$ with $r\ge k$.  Then \[\forb(m,1_{k,2},F)={m\choose 0}+{m\choose 1}+\cdots +{m\choose k-1}+ex^{(k)}(m,\mathcal{H})\]
\end{thm}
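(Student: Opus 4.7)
The lower bound follows the same template as Theorem~\ref{T-ell=1}: take all $\binom{m}{0}+\binom{m}{1}+\cdots+\binom{m}{k-1}$ columns of weight strictly less than $k$, together with the incidence matrix of a $k$-uniform $\mathcal{H}$-free hypergraph on $[m]$ of maximum size. Any two distinct columns share fewer than $k$ common $1$'s---a weight-$<k$ column has too few $1$'s to share $k$ with anything, and two distinct weight-$k$ columns sharing $k$ common $1$'s would have identical support and hence be equal---so $1_{k,2}$ is avoided; and since every column of $F$ has weight exactly $k$ while the weight-$k$ columns form an $\mathcal{H}$-free hypergraph, $F$ is also avoided.

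For the upper bound, let $A\in\Avoid(m,1_{k,2},F)$ and split its columns by weight into $A_{<k}$ and $A_{\ge k}$. Trivially $|A_{<k}|\le\binom{m}{0}+\cdots+\binom{m}{k-1}$, so the task reduces to showing $|A_{\ge k}|\le ex^{(k)}(m,\mathcal{H})$. Forbidding $1_{k,2}$ forces the supports of any two distinct columns in $A_{\ge k}$ to intersect in at most $k-1$ elements, so the $k$-shadows $\binom{\mathrm{supp}(c)}{k}$ for $c\in A_{\ge k}$ are pairwise disjoint subsets of $\binom{[m]}{k}$. Setting $\mathcal{G}:=\bigcup_{c\in A_{\ge k}}\binom{\mathrm{supp}(c)}{k}$ produces a $k$-uniform hypergraph with $|\mathcal{G}|=\sum_{c}\binom{|\mathrm{supp}(c)|}{k}\ge|A_{\ge k}|$, in which every edge $e$ is contained in a \emph{unique} column support, which we denote $c(e)$.

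The plan is to show $\mathcal{G}$ is $\mathcal{H}$-free, which yields $|A_{\ge k}|\le|\mathcal{G}|\le ex^{(k)}(m,\mathcal{H})$. Suppose for contradiction $\mathcal{H}\subseteq\mathcal{G}$ and fix a copy with parts $U_1,\ldots,U_r$ and edges $e_1,\ldots,e_n$. The associated columns $c(e_1),\ldots,c(e_n)$ are automatically distinct, since distinct shadows come from distinct columns; hence if each $c(e_i)$ happens to satisfy $\mathrm{supp}(c(e_i))\cap V(\mathcal{H})=e_i$, where $V(\mathcal{H}):=U_1\cup\cdots\cup U_r$, then the submatrix of $A$ on rows $V(\mathcal{H})$ and columns $c(e_1),\ldots,c(e_n)$ is a row/column permutation of $F$, contradicting $F\not\prec A$.

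The main obstacle is therefore ruling out ``extra'' $1$'s of $c(e_i)$ lying in $V(\mathcal{H})\setminus e_i$. This is where the hypothesis $r\ge k$ is essential: every edge of $\mathcal{H}$ occupies only $k$ of the $r$ available parts, and the intersection bound $|\mathrm{supp}(c)\cap\mathrm{supp}(c')|\le k-1$ limits the number of extras a single column can accumulate inside $V(\mathcal{H})$. The intended route is a supersaturation or averaging argument: either pick a uniformly random copy of $\mathcal{H}$ in $\mathcal{G}$ and show that the expected number of ``polluted'' columns is $o(1)$, or iteratively reassign vertices between parts of $\mathcal{H}$, using the spare parts afforded by $r\ge k$, to push offending extras out of $V(\mathcal{H})$ while preserving the copy of $\mathcal{H}$. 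Executing this cleanly without sacrificing the exact count is the principal technical hurdle.
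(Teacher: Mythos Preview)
Your lower bound is fine and matches the paper's. The upper bound, however, has a genuine gap.

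First, a small error: the columns $c(e_1),\ldots,c(e_n)$ are \emph{not} automatically distinct. The shadow sets $\binom{\mathrm{supp}(c)}{k}$ are pairwise disjoint, so each $e\in\mathcal{G}$ determines a unique $c(e)$, but nothing prevents two distinct edges $e_i,e_j$ of the embedded $\mathcal{H}$ from lying in the shadow of the \emph{same} heavy column, giving $c(e_i)=c(e_j)$.

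More seriously, you correctly isolate the real obstacle (extra $1$'s of $c(e_i)$ inside $V(\mathcal{H})\setminus e_i$) but do not resolve it. The sketched fixes---a supersaturation/averaging argument over random copies of $\mathcal{H}$, or an iterative reassignment of vertices among parts---are not carried out, and it is not clear either would deliver the \emph{exact} equality rather than an asymptotic bound.

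The paper bypasses all of this with a short ``downgrading'' argument. Given a maximum $A\in\Avoid(m,1_{k,2},F)$, replace every column of weight $>k$ by one of its weight-$k$ subcolumns to obtain $A'$. Simplicity and $1_{k,2}$-avoidance are easy; the crux is $F\nprec A'$. The key lemma is: if $F'$ is obtained from $F$ by flipping any single $0$ to a $1$, then $1_{k,2}\prec F'$. Equivalently, for any edge $e\in E(\mathcal{H})$ and any $v\in V(\mathcal{H})\setminus e$, the set $e\cup\{v\}$ contains another edge $f\in E(\mathcal{H})$ with $|f\cap(e\cup\{v\})|=k$. If $v$ lies in the same part as some $v'\in e$, take $f=(e\setminus\{v'\})\cup\{v\}$; otherwise $e\cup\{v\}$ meets $k+1$ distinct parts and any $k$-subset containing $v$ works. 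This is precisely where $r\ge k$ is used. Since $A$ avoids $1_{k,2}$, it avoids every such $F'$, so deleting $1$'s from columns of $A$ cannot create a copy of $F$; hence $A'\in\Avoid(m,1_{k,2},F)$ with all columns of weight $\le k$, and the bound follows immediately.

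Note that this same lemma actually patches your shadow argument too: if $c(e_i)$ had an extra $1$ at $v\in V(\mathcal{H})\setminus e_i$, the lemma produces another edge $f\subseteq e_i\cup\{v\}\subseteq\mathrm{supp}(c(e_i))$, forcing $c(f)=c(e_i)$---so the extra-$1$ problem and the non-distinctness problem are the same phenomenon. But the downgrading formulation avoids having to untangle this.
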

\begin{proof}
	For a lower bound, again take all columns with fewer than $k$ 1's along with the incidence matrix of a maximum $\mathcal{H}$ avoiding $k$-uniform hypergraph.  Let $A$ be a maximum matrix of $\Avoid(m,1_{k,2},F)$ and let $A'$ be a matrix obtained from $A$ by taking every column with more than $k$ 1's and removing 1's until these columns have $k$ 1's.  We claim that $A'\in \Avoid(m,1_{k,2},F)$.  Clearly $1_{k,2}\nprec A'$ (if $1_{k,2}\nprec A$ then removing 1's from $A$ can't induce this configuration) and $A'$ is simple (the columns with fewer than $k$ 1's were already distinct, and if any columns with $k$ 1's were identical we would have a $1_{k,2}$), so all that remains is to show that $F\nprec A'$.  
	
	To see this, we claim that if $F'$ is the matrix obtained by changing any 0 of $F$ to a 1 then $F'$ contains a $1_{k,2}$.  This claim is equivalent to saying that if one extends any $e\in E(\mathcal{H})$ to $e'=e\cup\set{v}$ for some $v\in V(\mathcal{H}),\ v\notin e$, then there exists an $f\in E(\mathcal{H})$ such that $|e'\cap f|=k$.  If $e$ contains no vertices that are in the same partition class as $v$, then if $f$ is any $k$-subset of $e'$ that includes $v$ then $f\in E(\mathcal{H})$ and $|e'\cap f|=k$.  If $e$ contains a vertex $v'$ that belongs to the same partition class as $v$, then $f=e'\setminus\set{v'}\in E(\mathcal{H})$ with $|e'\cap f|=k$, and thus we've proven the claim.  This means that $A$ can not contain any configuration that is obtained by taking 0's of $F$ and changing them to 1's (since $A$ avoids $1_{k,2}$), and hence the procedure of deleting 1's from $A$ can not induce an $F$  if $F\nprec A$, so we have $F\nprec A'$.

	Thus for an upper bound of $\forb(m,1_{k,2},F)$, one only needs to consider matrices where each column has at most $k$ 1's, and this clearly gives the above upper bound.
\end{proof}
\begin{cor}\label{1-2F11}
	\[\forb(m,1_{k,2},I_{s_1}\times\cdots I_{s_{k}})={m\choose 0}+\cdots+{m\choose k-1}+ex(m,K^{(k)}(s_1,\ldots,s_{k})).\]  In particular, $\forb(m,1_{2,2},F_{11})=1+m+ex(m,K_{2,2})=\Theta(m^{3/2})$.
\end{cor}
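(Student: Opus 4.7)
The plan is to derive this as an immediate consequence of Theorem~\ref{T-ell=2}, so the bulk of the work is simply identifying the product $I_{s_1}\times\cdots\times I_{s_k}$ with the incidence matrix of the appropriate hypergraph.

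First I would unpack the product construction. The matrix $I_{s_1}\times\cdots\times I_{s_k}$ has $s_1+s_2+\cdots+s_k$ rows partitioned into $k$ blocks (one per factor) of sizes $s_1,\ldots,s_k$, and $s_1 s_2\cdots s_k$ columns obtained by stacking one column of each $I_{s_i}$. Thus each column has exactly one $1$ in each of the $k$ row-blocks, and every way of choosing one row from each block is realized. Treating the $k$ row-blocks as the $k$ partition classes of a vertex set of size $s_1+\cdots+s_k$, the columns are exactly the characteristic vectors of the edges of the complete $k$-partite $k$-uniform hypergraph $K^{(k)}(s_1,\ldots,s_k)$. In particular $F = I_{s_1}\times\cdots\times I_{s_k}$ is the incidence matrix of a complete $r$-partite $k$-uniform hypergraph with $r=k$, so the hypothesis $r\ge k$ of Theorem~\ref{T-ell=2} is satisfied.

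Then I would apply Theorem~\ref{T-ell=2} with $\mathcal{H}=K^{(k)}(s_1,\ldots,s_k)$ to obtain
\[
    \forb(m,1_{k,2},I_{s_1}\times\cdots\times I_{s_k}) = \binom{m}{0}+\binom{m}{1}+\cdots+\binom{m}{k-1}+ex^{(k)}\bigl(m,K^{(k)}(s_1,\ldots,s_k)\bigr),
\]
which is the general formula stated in the corollary.

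For the $F_{11}$ specialization, I would note that $F_{11}=I_2\times I_2$ corresponds to $k=2$ and $s_1=s_2=2$, so the associated hypergraph is the graph $K^{(2)}(2,2)=K_{2,2}$. The general formula yields $\forb(m,1_{2,2},F_{11})=\binom{m}{0}+\binom{m}{1}+ex(m,K_{2,2})=1+m+ex(m,K_{2,2})$, and the K\H{o}v\'ari--S\'os--Tur\'an theorem gives $ex(m,K_{2,2})=\Theta(m^{3/2})$, finishing the proof. There is no real obstacle here: the only subtle point is matching the product construction with the complete multipartite hypergraph and verifying the side condition $r\ge k$, both of which are immediate.
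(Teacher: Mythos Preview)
Your proof is correct and is exactly the intended derivation: the paper states the corollary immediately after Theorem~\ref{T-ell=2} with no separate proof, so identifying $I_{s_1}\times\cdots\times I_{s_k}$ as the incidence matrix of $K^{(k)}(s_1,\ldots,s_k)$ (with $r=k$) and then specializing to $F_{11}=I_2\times I_2$ is precisely what is meant.
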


We note that in general $\forb(m,1_{k+1,1},F)\ne\forb(m,1_{k,2},F)$ when $F$ is the incidence matrix of a $k$-uniform hypergraph.  That is, the statement of Theorem~\ref{T-ell=2} can not be strengthened to include all hypergraphs as in Theorem~\ref{T-ell=1}.  For example, $Q_9$ is the incidence matrix of two disjoint edges.  It isn't difficult to see that the extremal number for this graph is $m-1$, and hence $\forb(m,1_{3,1},Q_9)=2m$.  However, the following matrix $A$ satisfies $|A|=2m+1$ and $A\in \Avoid(1,1_{2,2},Q_9)$:
\[
    A=\begin{bmatrix}
    0 & 1 & 0 & 0 \cdots 0 & 1 & 1  \cdots 1 & 0\\ 
    0 & 0 & 1 & 0 \cdots 0 & 1 & 0 \cdots 0 & 1\\ 
    0 & 0 & 0 & 1 \cdots0 & 0 & 1 \cdots 0 & 1\\ 
    \vdots & \vdots & \vdots & \vdots & \vdots &  \vdots & \vdots\\ 
    0 & 0 & 0& 0 \cdots 1 & 0 &  0 \cdots 1 & 1
    \end{bmatrix}
\]

 It should also be noted that the statement of Theorem~\ref{T-ell=2} is not as strong as possible.  For example, the theorem statement and general proof also applies to the configuration $F$ stated below, despite it not being the incidence matrix of a complete $r$-partite 3-uniform hypergraph.  It would be interesting to know of a complete characterization of $k$-uniform hypergraphs that satisfy Theorem~\ref{T-ell=2}.
\[
    F=\begin{bmatrix}
    1 & 1 & 1\\ 
    0 & 1 & 1\\ 
    1 & 0 & 1\\ 
    1 & 1 & 0
    \end{bmatrix}.
\]

Unfortunately for $1_{k,\ell}$ with $\ell>2$, this ``downgrading'' technique no longer works.  We are, however, able to obtain some partial results.

\begin{thm}\label{T-ell}
	For $\ell> 2$, \[\forb(m,1_{k,\ell},I_{s_1}\times\cdots\times I_{s_k})=\Omega(ex^{(k)}(m,K(s_1,\ldots,s_k)))\]\[ \forb(m,1_{k,\ell},I_{s_1}\times\cdots\times I_{s_k})=O(ex^{(k)}(m,K(s_1+c_1,\ldots,s_k+c_k))),\] where $c_i=(\ell-1)\max_{j\ne i}\set{\frac{s_j-1}{2}}\prod_{j\ne i}s_j$.
\end{thm}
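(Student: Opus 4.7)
The plan is to handle the two bounds separately. For the \emph{lower bound}, take $\mathcal{H}^*$ to be an $m$-vertex $k$-uniform hypergraph that avoids $K(s_1,\ldots,s_k)$ and has the maximum $ex^{(k)}(m,K(s_1,\ldots,s_k))$ edges. Its vertex--edge incidence matrix $A^*$ is a simple $m$-rowed $(0,1)$-matrix in which every column has weight exactly $k$. Any $\ell\ge 2$ columns sharing a common $k$-set of all-$1$ rows would have to be identical (each has only $k$ ones in total), contradicting simplicity, so $A^*$ avoids $1_{k,\ell}$. Moreover, $A^*$ contains $I_{s_1}\times\cdots\times I_{s_k}$ if and only if $\mathcal{H}^*$ contains $K(s_1,\ldots,s_k)$ as a subhypergraph, which it does not by construction.

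For the \emph{upper bound}, let $A\in\Avoid(m,1_{k,\ell},I_{s_1}\times\cdots\times I_{s_k})$. The $\sum_{j<k}\binom{m}{j}=O(m^{k-1})$ columns of weight less than $k$ are absorbed into the stated bound, since a star hypergraph shows $ex^{(k)}(m,K(n_1,\ldots,n_k))=\Omega(m^{k-1})$ whenever each $n_i\ge 2$. Restrict to the submatrix $A_1$ of the remaining columns and form the \emph{shadow hypergraph} $\mathcal{H}$ on $[m]$ whose edges are the $k$-sets $e$ with $e\subseteq S(c)$ for some $c\in A_1$. Because $A$ avoids $1_{k,\ell}$, each $e\in\mathcal{H}$ is contained in at most $\ell-1$ column supports (otherwise those $\ell$ columns together with the rows of $e$ form a $1_{k,\ell}$). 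Letting $d(e)$ denote the number of $c\in A_1$ with $e\subseteq S(c)$, we have
\[
|A_1|\;\le\;\sum_{c\in A_1}\binom{|S(c)|}{k}\;=\;\sum_{e\in\mathcal{H}} d(e)\;\le\;(\ell-1)|\mathcal{H}|.
\]
Thus the upper bound reduces to the key claim that $\mathcal{H}$ avoids $K(n_1,\ldots,n_k)$ with $n_i=s_i+c_i$.

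To prove this claim we suppose a copy of $K(n_1,\ldots,n_k)$ sits in $\mathcal{H}$ with parts $V_1,\ldots,V_k$. For each tuple $\mathbf{r}=(r_1,\ldots,r_k)\in V_1\times\cdots\times V_k$ there are between $1$ and $\ell-1$ witness columns $c$ with $\mathbf{r}\subseteq S(c)$, and the target is to find subsets $V_i'\subseteq V_i$ with $|V_i'|=s_i$ such that for every $\mathbf{r}\in V_1'\times\cdots\times V_k'$ some witness $c$ satisfies $S(c)\cap(V_1'\cup\cdots\cup V_k')=\mathbf{r}$; such a choice exhibits $I_{s_1}\times\cdots\times I_{s_k}\prec A$, a contradiction. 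The natural approach is probabilistic: choose each $V_j'$ uniformly of size $s_j$ and, for each tuple $\mathbf{r}$, estimate the probability that $\mathbf{r}$ is \emph{bad}, meaning $\mathbf{r}\in V_1'\times\cdots\times V_k'$ while every witness has some extra $1$ inside $V'=\bigcup V_j'$. Using $P[v\in V_j'\mid r_j\in V_j']=(s_j-1)/(n_j-1)$ for $v\in V_j\setminus\{r_j\}$, a union bound over the at most $\ell-1$ witnesses and their extras in each $V_j$ bounds the expected number of bad tuples by an expression matching the specific form of $c_i$.

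The \textbf{main obstacle} is executing this probabilistic step cleanly. The factor $(s_j-1)/2$ in $c_i$ strongly suggests that a naive union bound on a single worst witness is not sharp; instead one must use the fact that only $\ell-1$ witnesses are present per tuple and carefully aggregate contributions over the $\prod_{j\ne i}s_j$ tuples sharing a common $r_i$, so that the factor $(s_j-1)/(n_j-1)$ is controlled by $(s_j-1)/(2M_j)$ after substituting $n_j=s_j+c_j$ for the stated $c_j$. Choosing $c_i$ of the given form should push the expected number of bad tuples strictly below $1$, yielding a good deterministic $V_i'$ and closing the argument.
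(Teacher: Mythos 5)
Your lower bound argument is correct and matches the paper's (the incidence matrix of the extremal $K(s_1,\ldots,s_k)$-free $k$-uniform hypergraph, where avoidance of $1_{k,\ell}$ follows because all column sums equal $k$ and columns are distinct).

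The upper bound proposal, however, has a fatal gap at its central reduction. You reduce to the claim that the shadow hypergraph $\mathcal{H}$ (all $k$-sets contained in some column support of $A_1$) avoids $K(s_1+c_1,\ldots,s_k+c_k)$. This claim is false. Take $A$ to consist of the all-ones column $1_m$ together with all columns of weight at most $1$. Then $A$ is simple, avoids $1_{k,\ell}$ for any $\ell\ge 2$ (only one column has as many as $k$ ones), and avoids $I_{s_1}\times\cdots\times I_{s_k}$ for any $\prod s_j\ge 2$ (that configuration has $\prod s_j$ columns of weight exactly $k$, and $A$ has only one). Yet $\mathcal{H}=\binom{[m]}{k}$ contains every $K(n_1,\ldots,n_k)$ once $m\ge\sum n_j$. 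More generally, a single high-weight column makes $|\mathcal{H}|$ as large as $\binom{m}{k}$ while contributing only one column to $|A_1|$, so the chain $|A_1|\le(\ell-1)|\mathcal{H}|\le(\ell-1)\,ex^{(k)}(m,K(n_1,\ldots,n_k))$ cannot close: the first inequality is extremely lossy and the second is simply unavailable. This is also why the probabilistic step you flag as the ``main obstacle'' cannot be repaired: a witness column's number of extra ones inside a part $V_j$ is not bounded by anything derivable from $1_{k,\ell}$-avoidance alone (that condition caps how many columns contain a given $k$-set, not how many ones a single column has), and once a witness has more than $n_j-s_j$ extras in $V_j$ its good-event probability is exactly $0$.

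The paper avoids this entirely by building a \emph{different} $k$-uniform $k$-partite hypergraph, not the shadow. It partitions the weight-$\ge k$ columns by the positions of their first $k-1$ ones into classes $C(i_1,\ldots,i_{k-1})$, applies Lemma~\ref{L-avoiding rows} to extract from each class a set of rows $R(i_1,\ldots,i_{k-1})$ (of size a constant fraction of $|C(i_1,\ldots,i_{k-1})|$) supporting an identity submatrix, and takes edges $\{i_1,\ldots,i_{k-1},\rho\}$ with $\rho\in R(i_1,\ldots,i_{k-1})$. This hypergraph has $\Omega(|A_1|)$ edges (the inequality goes the useful direction), and the ``extras'' that could obstruct finding $I_{s_1}\times\cdots\times I_{s_k}$ live only in the rows $\{i_1,\ldots,i_{k-1}\}$ of other classes, where the $1_{k,\ell}$ condition bounds the ones \emph{row-wise} (at most $\ell-1$ per row per class). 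That row-wise bound is precisely what makes the pruning of at most $c_i$ vertices per part work and where the product $(\ell-1)\max_{j\ne i}\{(s_j-1)/2\}\prod_{j\ne i}s_j$ comes from. Your shadow-based reduction discards this structure and with it the only available control on extras.
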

We believe that this can be improved to $\forb(m,1_{k,\ell},I_{s_1}\times\cdots\times I_{s_k})=\Theta(ex^{(k)}(m,K(s_1,\ldots,s_k)))$, though we are unable to do so here.  Nevertheless, $ex^{(k)}(m,K(s_1+c_1,\ldots,s_k+c_k))=o(m^k)$, so this bound is non-trivial.

\begin{proof}
	The lower bound is simply the incidence matrix of the extremal hypergraph.  We first prove the upper bound for $k=2$ to demonstrate the general idea of the proof.  Let $A$ be a maximum matrix in $\Avoid(m,1_{2,\ell},I_r\times I_s)$ that has no columns with fewer than two 1's (and hence the $\forb$ function will be at most $O(m)$ larger than $|A|$). Let $C_i$ denote the set of columns of $A$ whose first 1 is in row $i$.  Note that any row $j\ne i$ restricted to $C_i$ has at most $\ell-1$ 1's (otherwise the row together with the $i$th would induce a $1_{2,\ell}$), and further note that each column of $C_i$ has a 1 in some row other than the $i$th (since every column has at least two 1's), i.e. every column of $C_i$ is identified by a 1.  We can thus use Lemma~\ref{L-avoiding rows} (after switching 0's and 1's in the lemma statement) to find  a set of rows $R_i$ such that restricted to $C_i$ these rows contain a $I_{|R_i|}$ and such that $|R_i|\ge 2^{2-\ell}|C_i|$.  We then define a bipartite graph with one vertex set corresponding to the $C_i$ column sets and the other vertex set corresponding to the rows of $A$, and we draw an edge between $C_i$ and $r$ if $r\in R_i$.  We would like to say that if this graph contains a $K_{r,s}$ (say the $r$ vertices coming from the $C_i$ vertex set and the $s$ vertices coming from the $R_i$ vertex set, which is a non-trivial assumption we will deal with later), then $A$ contains an $I_r\times I_s$.  Unfortunately, this is not true.  For example, if \[A=\begin{bmatrix}
	1 & 1 & 0 & 0\\ 
	0 & 1 & 1 & 1\\ 
	1 & 0 & 1 & 0\\ 
	0 & 1 & 0 & 1
	\end{bmatrix},\] then $A$ does not contain a $I_2\times I_2$, despite the corresponding graph being $K_{2,2}$.  The problem is that if we want to use columns from $C_i$ and $C_{i'}$ with $i<i'$, it's possible that there are 1's in the $i'$th row of $C_i$, and if these 1 columns correspond with the $I_s$ under $C_i$ then we can't actually use these columns.  Fortunately, each row below the $i$th row of $C_i$ contains fewer than $\ell$ 1's, so this problem can't happen too many times.  We claim that if instead of having an $I_s$ simultaneously under $r$ different $C_i$ we had an $I_{s+c_2}$, where $c_2=(\ell-1)\frac{r(r-1)}{2}$, simultaneously under $r$ different $C_i$, then we could find an $I_r\times I_s$.

	 Assume that we have this situation with the $i$'s of our $C_i$'s belonging to the set $\set{i_1,\ldots,i_r}_<$, and let $R'_0$ denotes the set of rows that contain the simultaneous $I_{s+c_2}$ under these $C_i$, noting that $|R'_0|=s+(\ell-1)\frac{r(r-1)}{2}$.  For $r\in R'_0$, we will say that its corresponding column restricted to $C_{i_j}$ is the column where $r$ contains the 1 it contributes to the $I_{|R'_0|}$ in $C_{i_j}$.  Note that restricted to the $r-1$ rows $\set{i_2,\ldots,i_r}$, $C_{i_1}$ contains at most $(\ell-1)(r-1)$ 1's (as each row has at most $\ell-1$ 1's).  Thus if $B_1$ is the set of columns of $C_{i_1}$ with 1's in these rows we have $|B_1|\le (\ell-1)(r-1)$.  Define $R'_1\sub R'_0$ to be the set of rows that have corresponding columns in $C_{i_1}$ that are not in $B_1$, and hence $|R'_1|\ge |R'_0|-(\ell-1)(r-1)=s+(\ell-1)\frac{(r-1)(r-2)}{2}$.  Note that restricted to the corresponding columns of $R'_1$ and the rows $\set{i_2,\ldots,i_r}$, $C_{i_1}$ is identically 0.  We can similarly define the subset $R'_2\sub R'_1$ consisting of the rows whose corresponding columns in $C_{i_2}$ are 0 in the rows $\set{i_3,\ldots,i_r}$ (row $i_1$ is automatically identically 0 restricted to $C_{i_2}$ since $i_1<i_2$) with $|R'_2|\ge |R'_1|-(\ell-1)(r-2)\ge s+ (\ell-1)\frac{(r-2)(r-3)}{2}$.  We repeat this process until we reach the set $R'_r$ which satisfies $|R'_r|\ge s$ and under each $C_{i_j}$, the corresponding columns of $R'_r$ are identically 0 in the other $i_{j'}$ rows.  This gives an $I_r\times I_s$.  
	 
	 However, to guarantee an $I_r\times I_s$ in $A$ it is insufficient to simply guarantee the existence of a $K_{r,s+c_2}$ in the graph we constructed, since we could have the $s+c_2$ vertices coming from the $C_i$ vertex set instead of the row vertex set. To remedy this, we must increase $r$ by a suitable amount as well, namely by $c_1=(\ell-1)\frac{s(s-1)}{2}$, as in this case a symmetric argument will guarantee our result.  Thus the existence of a $K_{r+c_1,s+c_2}$ in this graph guarantees an $I_r\times I_s$, so the graph must have $O(ex(m,K_{r+c_1,s+c_2}))$ edges, and hence $|A|=O(ex(m,K_{r+c_1,s+c_2}))$ as well.
	
	For the general problem, again consider a maximum $A$ with every column having at least $k$ 1's and define the set $C(i_1,\ldots,i_{k-1})$ to be the columns which have their first $k-1$ 1's in rows $i_1,\ldots, i_{k-1}$ and with $i_j>i_{j-1}$.  Again we can find rows $R(i_1,\ldots,i_{k-1})$ such that the number of rows is proportional to the number of columns of $C(i_1,\ldots,i_{k-1})$, and restricted to these rows and columns there is a large identity matrix.  We can then define a $k$-uniform $k$-partite hypergraph with vertex sets $V_j$ for $1\le j<k$ corresponding to all possible choices of $i_j$, and vertex set $V_k$ corresponding to all rows of $A$.   We then add the hyperedge $\set{i_1,\ldots,i_{k-1},r}$ to our hypergraph iff $r\in R(i_1,\ldots,i_{k-1})$.  If this hypergraph contains a $K^{(k)}(s_1+c_1,\ldots,s_k+c_k)$ where $c_i=(\ell-1)\max_{j\ne i}\set{\frac{s_j-1}{2}}\prod_{j\ne i}s_j$, then we claim that $A$ contains an $I_{s_1}\times\cdots\times I_{s_k}$.
	
	Assume that this hypergraph contains a $K^{(k)}(s_1+c_1,\ldots,s_k+c_k)$, say on the vertex sets $V'_1,\ldots,V'_k$ with $V'_j\sub V_j$ and $|V'_i|=s_i+c_i$ (again, an assumption we'll have to address later).
	First note that if $i_j\in V'_j$ and $i_{j'}\in V'_{j'}$ with $j<j'$, then $i_j<i_{j'}$.  Indeed, because we have a complete $k$-partite hypergraph, $i_j\in V'_j$ and $i_{j'}\in V'_{j'}$ means that there exists an edge containing both $i_j$ and $i_{j'}$ from these vertex sets.  If $j'<k$ then this edge corresponds to a column whose $j$th 1 is in row $i_j$ and $j'$th 1 is in row $i_{j'}$,and if $j<j'$ this only makes sense if $i_j<i_{j'}$.  If $j'=k$ then the $i_{j'}$th row must come after the rows where this column has its first $k-1$ 1's by definition, and hence again $i_j<i_{j'}$.  This means that for any $C(i_1,\ldots,i_{k-1}),\ i\in V_j$ with $i\ne i_j$ and $j<k-1$, the $i$th row of $C(i_1,\ldots,i_{k-1})$ is identically 0 (since its $(j+1)$th row with a 1 in it comes from row $i_{j+1}>i$ and its $(j-1)$th comes from $i_{j-1}<i$ if $j\ne1$), and hence when choosing corresponding rows from $V'_k$ the only potential pitfall will be the rows from $V'_{k-1}$ (as it is possible for $C(i_1,\ldots,i_{k-1})$ to have 1's in row $i\ne i_{k-1}$ even if $i\in V'_{k-1}$).
	
	For $j<k$ let $V''_j\sub V'_j$ be any subset with $|V''_j|=s_j$ and let $R'_0$  be the set of rows corresponding to the $I_{s_k+c_k}$ simultaneously under all of the $C(i_1,\ldots,i_{k-1})$ columns with $i_j\in V''_j$, and we emphasize that our observations in the preceding paragraph shows us that the rows of $R'_0$ lie entirely below the rows of every $V_j''$ for $1\le j<k-1$.  Let $i_1,\ldots,i_{k-2}$ be any fixed elements from the $V''_j$'s.  Restricted to the columns $C(i_1,\ldots,i_{k-1})$, where $i_{k-1}$ varies amongst all $V''_{k-1}$, we perform the same procedure that we used for the $k=2$ case to obtain a set of rows $R'_1$, after removing at most $(\ell-1)\frac{s_{k-1}(s_{k-1}-1)}{2}$ rows from $R'_0$, such that that for any $i_{k-1}\in V''_{k-1}$ and any corresponding column of $R'_1$ restricted to the rows $V_{k-1}''\setminus\set{i_{k-1}}$,  $C(i_1,\ldots,i_{k-1})$ is identically 0.  We then repeat this process for all possible sequences of $i_1,\ldots,i_{k-2}$, in total removing at most $\frac{s_{k-1}(s_{k-1}-1)}{2}\prod_{j<k-1}s_j$ rows (which in the worst case scenario is $(\ell-1)\max_{j\ne k}\set{\frac{s_j-1}{2}}\prod_{j\ne k}s_j$).  In the end we are left with a set $R'\sub R'_0$ with $|R'|\ge s_k$ and in the corresponding columns of any $C(i_1,\ldots,i_{k-1})$ for $i_j\in V''_j$ and restricted to the rows $V''_{k-1}\setminus\set{i_{k-1}}$ the matrix is identically 0.  This gives an $I_{s_1}\times\cdots\times I_{s_k}$ in $A$.  Hence the hypergraph can have at most $ex^{(k)}(m,K^{(k)}(s_1+c_1,\ldots,s_k+c_k))$ edges, which means that overall $|A|=O(ex^{(k)}(m,K^{(k)}(s_1+c_1,\ldots,s_k+c_k)))$.
\end{proof}
Next we consider $\forb(m,1_{k,1},F_{11})$ The following was proven by Gy\'arf\'as et. al. \cite{gyfs}.
\begin{prop}\label{Pr 1F11}
    $\forb(m,1_{4,1},F_{11})=\Theta(m^{3/2})$.
\end{prop}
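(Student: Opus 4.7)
The plan is to combine Corollary~\ref{1-1F11} for the lower bound with an extremal result on $3$-uniform hypergraphs avoiding a Berge $4$-cycle for the upper bound.

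The lower bound is immediate from Corollary~\ref{1-1F11}: since $1_{3,1}\prec 1_{4,1}$, any matrix avoiding $1_{3,1}$ also avoids $1_{4,1}$, so
\[
    \forb(m, 1_{4,1}, F_{11}) \;\geq\; \forb(m, 1_{3,1}, F_{11}) \;=\; 1 + m + ex(m, K_{2,2}) \;=\; \Theta(m^{3/2}).
\]

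For the upper bound, let $A\in\Avoid(m,1_{4,1},F_{11})$. Because $1_{4,1}\nprec A$, every column of $A$ has weight at most $3$. I would partition the columns of $A$ by weight: those of weight $\le 1$ contribute at most $m+1$; those of weight $2$ form a simple graph $G$ on the $m$ rows; those of weight $3$ form a simple $3$-uniform hypergraph $\mathcal{H}$ on the $m$ rows. The key point is that $F_{11}\prec A$ holds exactly when there are four rows $R$ and four distinct columns of $A$ whose restrictions to $R$ are the four edges of a $C_4$ on $R$. In hypergraph language, this is a Berge $4$-cycle in the mixed family $E(G)\cup\mathcal{H}$, where a weight-$2$ column contributes its unique pair and a weight-$3$ column may contribute any one of its three pairs (the third $1$ lying outside $R$).

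Thus the first step is to observe that $G$ alone must be $C_4$-free (otherwise four of its edges directly realize $F_{11}$), so $|E(G)|=O(m^{3/2})$ by K\H{o}v\'ari--S\'os--Tur\'an. The second step, which is the main obstacle, is to bound $|\mathcal{H}|$ together with the interaction between $G$ and $\mathcal{H}$ by $O(m^{3/2})$: even if $G$ and $\mathcal{H}$ were individually $C_4$-free and Berge-$C_4$-free, an $F_{11}$ could still arise from a mix of weight-$2$ and weight-$3$ columns, and ruling out all such mixed obstructions simultaneously is exactly the Berge $4$-cycle extremal result of Gy\'arf\'as et~al.\ \cite{gyfs}. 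Applying their theorem to the combined $2$-/$3$-uniform family yields $|E(G)|+|\mathcal{H}|=O(m^{3/2})$, and adding the weight-$\le 1$ count gives $|A|=O(m^{3/2})$, completing the proof.
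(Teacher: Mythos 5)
Your proposal is correct, and it mirrors the paper's primary attribution: the paper also credits the proposition to F\"uredi, Gy\'arf\'as and Sali \cite{gyfs} and notes that the lower bound comes from the incidence matrix of a maximal $C_4$-free graph (which is precisely the weight-$2$ part of the extremal example for Corollary~\ref{1-1F11} that you invoke, with the monotonicity $1_{3,1}\prec 1_{4,1}$ correctly giving $\forb(m,1_{4,1},F_{11})\ge\forb(m,1_{3,1},F_{11})$). Your characterization of $F_{11}\prec A$ as a trace $C_4$ --- with the careful remark that the third $1$ of a weight-$3$ column must lie outside the chosen four rows --- is exactly the right notion, and it is exactly what the title of \cite{gyfs} refers to.

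The one thing to note is that the paper does not stop at the citation: it also derives the upper bound as a special case ($k=4$, $r=s=2$) of the more general Theorem~\ref{thm:1kkrs}, $\forb(m,1_{k,1},I_r\times I_s)=O(m^{k-1-\frac{1}{s}\binom{k-1}{2}})$, for which it supplies a self-contained proof. That proof passes to a $k$-partite subfamily, recursively defines ``$i$-thick'' hyperedges so that non-thick ones are few by induction and the $2$-shadow of the fully thick family is $K_{r,s}$-free, and then invokes the Alon--Shikhelman theorem to count the $K_k$'s in that $2$-shadow. So where you delegate the entire mixed $2$-/$3$-uniform Berge/trace-$C_4$ argument to \cite{gyfs}, the paper instead proves a general-$k$ statement internally; the two routes give the same $O(m^{3/2})$ bound, but the paper's route has the advantage of being self-contained and of yielding the full Theorem~\ref{thm:1kkrs}, of which Proposition~\ref{Pr 1F11} is one instance.
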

Proposition~\ref{Pr 1F11} is a corollary of the following theorem that was first proven by F\"uredi and Sali \cite{fusa}
\begin{thm}\label{thm:1kkrs}
$r\ge s\ge k-2\ge 1$ be fixed integers. Then $\forb(m,1_{k,1},I_r\times I_s)=O(m^{k-1-\frac{1}{s}\binom{k-1}{2}})$. Furthermore, if $r\ge (s-1)!+1$ and $s\ge 2k-4$, then
$\forb(m,1_{k,1},I_r\times I_s)=\Theta(m^{k-1-\frac{1}{s}\binom{k-1}{2}})$
\end{thm}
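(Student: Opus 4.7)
My plan is to prove the upper bound by stratifying the columns of $A$ by weight and bounding each weight class via a Kővári–Sós–Turán-type trace argument, and to prove the matching lower bound by algebraically decorating a $K_{r,s}$-free norm graph. Throughout, fix $A\in\Avoid(m,1_{k,1},I_r\times I_s)$.

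Because $1_{k,1}\nprec A$, every column of $A$ has weight at most $k-1$. Columns of weight $0$ or $1$ contribute at most $1+m$ to $|A|$. For each $w\in\{2,\ldots,k-1\}$, let $\mathcal{H}_w$ denote the simple $w$-uniform hypergraph on $[m]$ formed by the weight-$w$ columns, so that $|A|\le 1+m+\sum_{w=2}^{k-1}|\mathcal{H}_w|$. Since $A$ avoids $I_r\times I_s$, so does the submatrix formed by any single weight class; this translates into the following trace condition on each $\mathcal{H}_w$: there do not exist disjoint $U,V\subseteq[m]$ with $|U|=r$, $|V|=s$ and $rs$ distinct edges $\{e_{u,v}:(u,v)\in U\times V\}\subseteq\mathcal{H}_w$ satisfying $e_{u,v}\cap(U\cup V)=\{u,v\}$ for each $(u,v)$. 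I would then prove $|\mathcal{H}_w|=O(m^{w-\binom{w}{2}/s})$ by double-counting pairs $(T,\mathcal{E})$ where $T\in\binom{[m]}{2}$ and $\mathcal{E}\subseteq\mathcal{H}_w$ is an $s$-element family of edges each containing $T$. By convexity applied to the pair-degree sequence (whose sum is $\binom{w}{2}|\mathcal{H}_w|$), this count is at least $\binom{m}{2}\binom{\binom{w}{2}|\mathcal{H}_w|/\binom{m}{2}}{s}$; on the other hand, if such stars are too plentiful a greedy cleanup in the spirit of Lemma~\ref{L-avoiding rows} and of the proof of Theorem~\ref{T-ell} extracts disjoint $U,V$ of sizes $r,s$ together with clean edges $e_{u,v}$ (with the incidental intersection with $U\cup V$ pruned by discarding a bounded number of stars per pair), producing the forbidden configuration. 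The hypothesis $s\ge k-2$ is precisely what makes $w\mapsto w-\binom{w}{2}/s$ non-decreasing on $\{2,\ldots,k-1\}$, so summing over $w$ the dominant contribution is $w=k-1$ and yields $|A|=O(m^{k-1-\binom{k-1}{2}/s})$.

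For the lower bound, under $r\ge(s-1)!+1$ the Kollár–Rónyai–Szabó norm-graph construction provides a $K_{r,s}$-free bipartite graph $G$ on $[m]$ with $\Theta(m^{2-1/s})$ edges. I would decorate each edge of $G$ with an algebraically or randomly chosen $(k-3)$-tuple of additional vertices, producing a $(k-1)$-uniform hypergraph whose edge count is $\Theta(m^{k-1-\binom{k-1}{2}/s})$ and whose trace on every $(r+s)$-set avoids the $I_r\times I_s$ pattern. The condition $s\ge 2k-4$ is exactly the threshold at which a first-moment estimate controls the number of spurious $I_r\times I_s$-traces, allowing them to be destroyed by removing $o(m^{k-1-\binom{k-1}{2}/s})$ hyperedges; a naive blow-up (in which the filler vertices are unrestricted) already fails, because then an $I_r\times I_s$ can be extracted by splitting $U\cup V$ across the base and filler coordinates, so the algebraic choice is essential.

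The principal obstacle is the sharp upper bound inside each weight class. Passing to the $2$-uniform shadow of $\mathcal{H}_w$ (which is $K_{r,s}$-free) and applying Kővári–Sós–Turán directly gives only $|\mathcal{H}_w|=O(m^{w-1/s})$, weaker than the target by a factor of $\binom{w}{2}$ in the saving exponent. Recovering the correct $\binom{w}{2}/s$ saving demands exploiting the equation $e_{u,v}\cap(U\cup V)=\{u,v\}$ and not merely the containment $\{u,v\}\subseteq e_{u,v}$; this is where the refined star-counting bound and the greedy cleanup become essential, and where the same technical difficulties visible in the proof of Theorem~\ref{T-ell} reappear.
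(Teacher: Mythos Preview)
Your approach diverges from the paper's and has a substantive gap in the upper bound.

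The paper does not attack the weight classes by a direct double count. It inducts on $k$, handling columns of weight below $k-1$ by hypothesis, and for the weight-$(k-1)$ class passes to a largest $(k-1)$-partite subhypergraph, then iteratively discards ``thin'' hyperedges (those lying in fewer than $r+s-1$ hyperedges along some coordinate). The discarded edges are bounded inductively; for the surviving ``thick'' ones the $2$-shadow is shown to be $K_{r,s}$-free, because a $K_{r,s}$ in the shadow can be lifted, one shadow-edge at a time using thickness to dodge the other $r+s-2$ vertices, to an $I_r\times I_s$ trace. At that point the Alon--Shikhelman theorem $ex(m,K_{k-1},K_{r,s})=O(m^{k-1-\binom{k-1}{2}/s})$ finishes: every hyperedge spans a $K_{k-1}$ in its own shadow, so the number of thick hyperedges is at most the number of $(k-1)$-cliques in a $K_{r,s}$-free graph.

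Two points about your sketch. First, your parenthetical that the $2$-shadow ``is $K_{r,s}$-free'' is not automatic: a $K_{r,s}$ in the shadow is witnessed by hyperedges that may hit the remaining $r+s-2$ vertices of $U\cup V$, so no trace is forced. Establishing shadow-freeness is exactly the content of the paper's thickening step, and once you have it, Alon--Shikhelman (not merely K\H{o}v\'ari--S\'os--Tur\'an) already delivers the full exponent---so the shadow route you dismiss as ``weaker by a factor of $\binom{w}{2}$'' is in fact the correct one. Second, your pair-star count does not extract the configuration: all $s$ edges in a $(T,\mathcal{E})$-star share the \emph{same} pair $T$, whereas $I_r\times I_s$ requires $rs$ edges meeting $U\cup V$ in $rs$ \emph{distinct} pairs. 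Converting many popular pairs into a clean $r\times s$ grid of witnesses is precisely the content of Alon--Shikhelman's theorem; your ``greedy cleanup'' is not a routine step but the whole difficulty.

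For the lower bound the paper is simpler than your decoration scheme: take the Alon--Shikhelman extremal graph $G$, which is $K_{r,s}$-free and contains $\Theta(m^{k-1-\binom{k-1}{2}/s})$ copies of $K_{k-1}$, and let the hyperedges be exactly the vertex sets of those cliques. Since $G$ has no $K_{r,s}$ subgraph it has no $K_{r,s}$ trace, and neither does the clique hypergraph. No randomness or deletion is needed; the hypothesis $s\ge 2k-4$ enters only as the range where Alon--Shikhelman's lower bound is known to match the upper bound.
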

For the sake of completeness we give a simpler proof extending ideas of \cite{gyfs}
We need the following theorem of Alon and Shikhelman.  Let $ex(m,G,H)$ mean the largest possible number of subgraphs isomorphic to $G$ in an $m$-vertex graph that does not have $H$ as subgraph. Alon and Shikhelman prove 
\begin{thm}[Alon and Shikhelman]\label{thm:ash}
Let $r\ge s\ge k-1$ be fixed integers. Then $ex(m,K_k,K_{r,s})=O(m^{k-\frac{1}{s}\binom{k}{2}})$, furthermore, if $r\ge (s-1)!+1$ and $s\ge 2k-2$, then $ex(m,K_k,K_{r,s})=\Theta(m^{k-\frac{1}{s}\binom{k}{2}})$.
\end{thm}
\begin{proof}[Simpler Proof of Theorem~\ref{thm:1kkrs}]
Let $A\in\Avoid(m,1_{k+1,1},I_r\times I_s)$. We can inductively conclude that $\forb(m,1_{k+1,1},I_r\times I_s)^{< k}=O(m^{k-1-\frac{1}{s}\binom{k-1}{2}})$, base case being $k=3$. 
let $A'$ be obtained by deleting columns of sum less than $k$ from $A$. 
Consider columns of $A'$ as characteristic vectors of a $k$-uniform hypergraph $\mathcal{F}$. Let $\mathcal{F}'_1$ be a largest size $k$-partite subhypergraph of $\mathcal{F}$, with partite classes $V_1,V_2,\ldots,V_k$.  It is well know that $|\mathcal{F}|\le c_k|\mathcal{F}'_1|$ for some constant $c_k$. Let $\mathcal{H}_i$ be the $(k-1)$-partite graph induced by $\mathcal{F}'_1$ after ignoring $V_i$. Observe that no $\mathcal{H}_i$ contains $K_{r,s}$ as a trace. Call a  hyperedge $F\in \mathcal{F'}_1$ 1-thick if restricted to each $\mathcal{H}_i$, $F$ is contained in at least $r+s-2$ other hyperedges of $\mathcal{F'}_1$, and call $F$ 0-thick otherwise.  There are at most $(r+s-2)|E(\mathcal{H}_i)|$ 0-thick edges. Recursively define $\mathcal{F}'_i$ to consist of all $F\in\mathcal{F}'_{i-1}$ that are $i-1$ thick, and call $F\in \mathcal{F}'_i$  $i$-thick if restricted to each $\mathcal{H}_i$ it is contained in at least $r+s-1$ hyperedges of $\mathcal{F}'_i$.  By the same reasoning as before, $|\set{F|F\in \mathcal{F}'_{i-1},F\notin\mathcal{F}'_i}|\le(r+s-2)|E(\mathcal{H}_i)|$, and thus the number of $F\in \mathcal{F}'_1$ that are not $k$-thick is at most $k(r+s-2)|E(\mathcal{H}_i)|=O(m^{k-1-\frac{1}{s}\binom{k-1}{2}})$ by the inductive hypothesis.  On the other hand, the 2-shadow of $\mathcal{F}'_{k}$ can not contain an $K_{r,s}$.  

Assume in contrary that this is the case and consider an edge $\set{x_1,x_2}$ used in this $K_{r,s}$ and let $F_0$ be a $k$-thick edge with $\set{x_1,x_2}\in F_0$.  If $F_0$ contains no vertex in $(V(K_{r,s})\setminus \set{x_1,x_2})\cap V_1$, then define $F_1=F_0$.  Otherwise, by definition of $F_0$ being a $k$-thick edge there exists $r+s-1$ hyperedges that are $(k-1)$-thick and that differ with $F_0$ only in the vertex set $V_1$.  By the pigeonhole principle, one of these hyperedges, call it $F_1$, does not contain any vertex of $(V(K_{r,s})\setminus \set{x_1,x_2})\cap V_1$ and still has $\set{x_1,x_2}\in F_1$.  Continue this way, defining $F_{i}$ to be a $(k-i)$-thick hyperedge that contains $\set{x_1,x_2}$ and no vertices of $(V(K_{r,s})\setminus \set{x_1,x_2})\cap \bigcup_{j\le i} V_j$, and we can do this at each step by the way we defined $(k-i)$-thickness.  In the end we obtain a hyperedge $F_k$ that contains $\set{x_1,x_2}$ and no other vertices of the $K_{r,s}$.  We can repeat this process for each edge of the $K_{r,s}$, and thus these hyperedges contain $I_r\times I_s$ as a trace. 
Thus, we inferred that the 2-shadow does not have $K_{r,s}$ as a subgraph. Apply Theorem~\ref{thm:ash} to the graph determined by the 2-shadow of $\mathcal{F}'_{k}$ and obtain that the number of $K_k$ subgraphs is at most $O(m^{k-\frac{1}{s}\binom{k}{2}})$, which clearly is an upper bound for $|\mathcal{F}'_{k}|$. \par
Summarising,
\[
    |A|=|A\setminus A'|+|A'|\le |A\setminus A'|+\frac{1}{c_k}(k(r+s-1)|E(\mathcal{H}_i)|+|\mathcal{F}'_{k}|)=O(m^{k-\frac{1}{s}\binom{k}{2}}).
\]
To prove the lower bound take a graph $G$ that gives the lower bound in Alon-Shikhelman' Theorem and let $\mathcal F$ consists of those $k$-subsets of the vertices that induce a complete graph. Since $G$ does not have $K_{r,s}$  subgraph, $\mathcal F$ does not have $K_{r,s}$ as \emph{trace}, so if $A$ is the vertex-edge incidence matrix of $\mathcal F$, then $A\in\Avoid(m,1_{k+1,1},I_r\times I_s)$.
\end{proof}

Note that the upper bound in Proposition~\ref{Pr 1F11} is obtained by putting $r=s=k-1=2$. The lower bound in Theorem~\ref{thm:1kkrs} does not give the lower bound of Proposition~\ref{Pr 1F11} directly, however the vertex-edge incidence matrix of a maximal $C_4$-free grah works.

\begin{rem}\label{rem:no-conj}
Despite the largest product avoiding $1_4$ and $I_r\times I_s$ being a 1-fold product, Theorem~\ref{thm:1kkrs} shows that one can make $\forb(m,1_4,I_r\times I_s)=\Theta(m^{3-\epsilon})$.  Thus the best we could hope for as an extension of Conjecture~\ref{grand} for general forbidden families is $\forb(m,F,G)=o(m^p)$ if $\forb(m,F)=\Theta(m^p)$ and there exists no $p$-fold product avoiding both $F$ and $G$. However, we do not dare to formulate this as a conjecture. 
\end{rem}
The following extension of 
Proposition~\ref{Pr 1F11} was proven in \cite{fusa}.
\begin{prop}\label{prop:1kf11}
 Let $k\ge 3$ be a positive integer. Then 
  $\forb(m,1_{k,1},F_{11})=\Theta(m^{3/2})$.
\end{prop}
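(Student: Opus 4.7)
The plan is to prove the matching upper and lower bounds separately; the lower bound is essentially free, while the upper bound follows by induction on $k$ using the multi-round thickening machinery already developed in the proof of Theorem~\ref{thm:1kkrs}.

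For the lower bound, the vertex--edge incidence matrix of an extremal $C_4$-free graph on $m$ vertices provides a matrix with $\Theta(m^{3/2})$ columns, each of weight $2$. Such a matrix avoids $1_{k,1}$ for every $k\ge 3$ (since each column sum equals $2$) and avoids $F_{11}$, which is precisely the incidence matrix of $K_{2,2}=C_4$.

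For the upper bound, induct on $k$, with base cases $k=3$ and $k=4$ handled by Corollary~\ref{1-1F11} and Proposition~\ref{Pr 1F11} respectively. Assume $\forb(m,1_{k-1,1},F_{11})=O(m^{3/2})$ and let $A\in\Avoid(m,1_{k,1},F_{11})$. Split $A$ by column weight: the columns of weight $\le k-2$ form a submatrix lying in $\Avoid(m,1_{k-1,1},F_{11})$, so the inductive hypothesis shows they contribute only $O(m^{3/2})$. Let $\mathcal{F}$ be the $(k-1)$-uniform hypergraph consisting of the columns of weight exactly $k-1$; the task reduces to proving $|\mathcal{F}|=O(m^{3/2})$. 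Following the proof of Theorem~\ref{thm:1kkrs} with $r=s=2$, first pass to a largest $(k-1)$-partite subhypergraph $\mathcal{F}'_1$ with classes $V_1,\ldots,V_{k-1}$, then iteratively discard ``thin'' edges over $k-1$ rounds to obtain $\mathcal{F}^*$. At each round the thin edges are controlled by the sizes $|E(\mathcal{H}_j)|$ of the traces $\mathcal{H}_j$ obtained by ignoring the class $V_j$; each $\mathcal{H}_j$ is a genuine submatrix of $A$ whose columns have weight $k-2$, so $\mathcal{H}_j\in\Avoid(m,1_{k-1,1},F_{11})$ and hence $|E(\mathcal{H}_j)|=O(m^{3/2})$ by the inductive hypothesis. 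Summing over the $O(k)$ rounds, at most $O(m^{3/2})$ edges are discarded in total.

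After thinning, the residue $\mathcal{F}^*$ has a $K_{2,2}$-free $2$-shadow: any $C_4$ in the shadow could be lifted, one vertex class at a time, using the thickness condition and the pigeonhole principle, to four edges of $A$ whose restrictions to the four shadow vertices form exactly $F_{11}$, contradicting $F_{11}\nprec A$. Alon--Shikhelman's theorem (Theorem~\ref{thm:ash}) then gives $|\mathcal{F}^*|\le ex(m,K_{k-1},K_{2,2})=O(m^{(k-1)-\binom{k-1}{2}/2})$, and a direct calculation shows $(k-1)-\binom{k-1}{2}/2\le 3/2$ for every $k\ge 3$; in fact the bound is $0$ as soon as $k\ge 5$, since $K_{k-1}\supseteq K_4\supseteq C_4$. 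Combining the three contributions yields $|A|=O(m^{3/2})$. The main obstacle is the clean adaptation of the multi-round thickening argument to the restricted $r=s=2$ setting---specifically, verifying that each auxiliary trace $\mathcal{H}_j$ is a genuine submatrix of $A$ (so that $F_{11}$-freeness and the inductive hypothesis transfer) and checking that the lift of a shadow $C_4$ to $F_{11}$ uses the thickness correctly at every layer.
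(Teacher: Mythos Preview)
Your proof is correct and is precisely the argument the paper points to: the paper does not write out a proof of this proposition but cites \cite{fusa} and remarks that ``an alternate proof of this Proposition could be given using similar ideas as in the simpler proof of Theorem~\ref{thm:1kkrs},'' which is exactly what you carry out. One small point of phrasing: you invoke Theorem~\ref{thm:ash} for $ex(m,K_{k-1},K_{2,2})$ even though its hypothesis $s\ge k-1$ fails once $k\ge 5$; however, you immediately rescue this by observing that a $K_{2,2}$-free graph contains no $K_4$, so $ex(m,K_{k-1},K_{2,2})=0$ for $k\ge 5$ and Alon--Shikhelman is not actually needed there---the inequality $|\mathcal{F}^*|\le ex(m,K_{k-1},K_{2,2})$ itself is just the definition, and the base cases $k=3,4$ cover the remaining range.
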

An alternate proof of this Proposition could be given using similar ideas as in the simpler proof of Theorem~\ref{thm:1kkrs}.
\section{Avoiding $F_9$}
\[
    F_9=\begin{bmatrix}
    1 & 0 & 0\\ 
    0 & 1 & 0\\ 
    0 & 0 & 1\\ 
    0 & 0 & 1
    \end{bmatrix}
\]

\begin{thm}\label{F9}
	$\forb(m,Q_3(t),F_9)=\Theta(m)$.
\end{thm}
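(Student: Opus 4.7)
The lower bound $\forb(m, Q_3(t), F_9) \geq m$ is immediate from the identity matrix $I_m$: since every column has weight $1$, neither $Q_3(t)$ (whose last column has weight $2$) nor $F_9$ (whose third column has weight $2$) can appear as a configuration.

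For the upper bound, the plan is to combine Theorem~\ref{T-fam} with a structural argument driven by $F_9$-avoidance. Let $A \in \Avoid(m,Q_3(t),F_9)$. If $t\cdot I_3 \nprec A$, then Theorem~\ref{T-fam} directly gives $|A| \leq \forb(m, Q_3(t), t\cdot I_3) = O(m)$. Otherwise, let $k \geq 3$ be the largest integer with $t\cdot I_k \prec A$, and fix an embedding in the first $k$ rows of $A$. Applying $Q_3(t) \nprec A$ to pairs of these ``special'' rows, no column can have two $1$'s among them, so the columns of $A$ partition into $B$ (all zero on the first $k$ rows) and $D_1,\ldots,D_k$ (a single $1$ among the first $k$ rows, in row $i$), with $|D_i| \geq t$. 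Now invoke $F_9 \nprec A$: fixing three distinct special rows $r_a,r_b,r_c$ and a non-special $\rho$, any $\gamma \in D_c$ with $\gamma(\rho)=1$ together with $\alpha \in D_a,\beta \in D_b$ satisfying $\alpha(\rho)=\beta(\rho)=0$ would produce $F_9$ on these four rows. Symmetrizing over $(a,b,c)$, every non-special $\rho$ is either \emph{silent} (all $D_l$ identically $0$ at $\rho$) or has at most two indices $l$ for which $D_l$ is not identically $1$ at $\rho$. Setting $S_l^? = \{\rho : D_l \text{ is mixed at } \rho\}$, this forces each non-special row to lie in at most two $S_l^?$'s, so $\sum_l |S_l^?| \leq 2(m-k)$.

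A further use of $Q_3(t) \nprec A$ on the pair $(r_l,\rho)$ for each $\rho \in S_l^?$ refines the structure: because the $F_9$-constraint guarantees some $l' \ne l$ with $\rho \in S_{l'}^1$, there are already $\geq t$ columns of type $[0,1]$ on $(r_l,\rho)$ coming from $D_{l'}$, and the mixed $D_l$ itself supplies a $[1,1]$ column. Hence in the typical case---whenever some $[0,0]$ column exists on $(r_l,\rho)$---avoiding $Q_3(t)$ forces $|\{c \in D_l : c(\rho)=0\}| \leq t-1$, yielding the sparsity bound $|D_l| \leq (t-1)|S_l^?|+1$. The exceptional \emph{bad} rows, at which every column of $A$ outside $D_l$ is $1$ at $\rho$, form a family disjoint over $l$ and admit a secondary $Q_3(t)$-based analysis on pairs of bad rows inside the same $D_l$ that limits their contribution to $|D_l|$ by a constant depending on $t$. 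Combining, $\sum_l |D_l| = O(tm)$.

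Finally, the columns of $B$, restricted to the $m-k$ non-special rows, form a simple matrix in $\Avoid(m-k, Q_3(t), F_9)$, so induction on $m$ yields $|B| = O(m-k)$; combining, $|A|=O(m)$ with a constant depending only on $t$. The main technical obstacle is handling the bad rows cleanly and closing the induction with a uniform constant in $t$: one must choose $k$ large enough (e.g., maximal) and carefully balance the structural bounds against the constants arising from Theorem~\ref{T-fam}, since a too-small choice of $k$ makes the $O(tm)$ slack from the $D_l$'s dominate any gain from the induction on $|B|$.
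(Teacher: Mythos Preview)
Your proposal shares the paper's skeleton---choose the maximal $k$ with $t\cdot I_k\prec A$, partition columns by their pattern on the $k$ special rows, and bound each block---but the two places you flag as unfinished are exactly where your argument and the paper's diverge, and as written yours does not close.

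The induction on $|B|$ is the real gap. You would need $|A|\le C(m-k)+O_t(m)$ to fold into $|A|\le Cm$, but since $k$ may be as small as $3$ the $O_t(m)$ contribution from $\sum_l|D_l|$ swamps the $Ck$ savings and no uniform constant $C$ survives. The paper does not induct on $|B|$ at all. Instead it shows directly that if $t\cdot I_3\prec C^2$ (forced by Theorem~\ref{T-fam} once $|C^2|>c_{3,t}(m-k)$), then two of those rows carry a $Q_3(t;0)$ inside $C^2$, and this extends to a full $Q_3(t)$ in $A$ because some $C_i$ supplies the missing $\begin{bmatrix}1\\1\end{bmatrix}$ column on that pair of rows. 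The same extension trick, applied inside each $C_i$, gives $|C_i|\le c_{3,t} r_i$ where $r_i$ is the number of rows not identically $1$ on $C_i$, and then $\sum r_i\le m$ finishes the $\bigcup C_i$ bound.

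What makes that extension work is a sharper structural lemma than your ``at most two'' claim. The paper first argues (using maximality of $k$ together with the standing assumption $|A|=\omega(m)$) that no non-special row can be identically $0$ on $\bigcup C_i$; from this it deduces that every non-special row has $0$'s in \emph{at most one} $C_i$. With ``at most one'' and $k\ge3$, any two non-special rows are simultaneously identically $1$ on some $C_i$, which is exactly what produces the $\begin{bmatrix}1\\1\end{bmatrix}$ column. Your weaker ``at most two'' statement---which is all one gets without first ruling out silent rows---loses this mechanism when $k=3$, and that is what pushes you into the sparsity-plus-bad-rows detour. The missing step, then, is not a more delicate bad-row analysis but rather the observation that silent rows can be eliminated up front (via the maximality of $k$), after which the bound on $|B|$ becomes a one-line application of Theorem~\ref{T-fam} rather than an induction.
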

\begin{proof}
	Note that $I_m$ gives the lower bound.  For the upper bound, we first take a look at what our preliminary data tells us.  We have that $F_9\prec I_3\times I_2^c$, so by Theorem~\ref{T-Q3Upper} we know that $\forb(m,Q_3(t),F_9)=O(m^{3/2})$.  It also isn't too hard to show (using methods similar to what we'll use below) that $|\tilde{A}|=O(m)$ if $\tilde{A}\in \Avoid(m,Q_3(t),F_9)$ meets all the requirements of the $A'_j$ matrices in the statement of Theorem~\ref{T-Q3Main}, so we have $\forb(m,Q_3(t),F_9)=O(m\log m)$ by Corollary~\ref{C-log bound}, and this suggests that $\forb(m,Q_3(t),F_9)=O(m)$.  Unfortunately, this is as far as we can get using the results of Theorem~\ref{T-Q3Main}.  However, by following the same basic argument of the proof of the theorem, and by using the extra information that we must also avoid $F_9$, we will be able to show the $O(m)$ result.
	
	Let $A\in \Avoid(m,Q_3(t),F_9)$ such that $|A|$ is maximal and assume $|A|=\omega(m)$.  Let $k$ be the largest integer such that $t\cdot I_k\prec A$ (we don't consider the $R_1$ rows as that technical step will not be required for this proof).  Rearrange rows so that this $t\cdot I_k$ appears in the first $k$ rows and let $C_i$ denote the set of columns with a 1 in row $i$ and $C^2$ the columns with no 1's in the first $k$ rows (and we can assume that $k\ge 3$, thus having no $Q_3(t)$ implies that no column can have two 1's in the first $k$ rows, so all columns belong to precisely one of these sets).
	
	\begin{lem}\label{L-nzero}
		No row $r$ restricted to $\bigcup C_i$ is identically 0.	
	\end{lem}
	\begin{proof}
		Assume there is an $r$ such that $r$ is identically 0 restricted to $\bigcup C_i$.  Consider how many 1's $r$ has in $C^2$.  If $r$ has fewer than $t$ 1's, then by using the standard induction with row $r$ we see that $|C_r|\le t-1=O(1)$, so we could inductively conclude that $|A|=O(m)$.  Otherwise there are at least $t$ 1's, in which case one could use this row to find a $t\cdot I_{k+1}$ in $A$, a contradiction.
	\end{proof}
	\begin{lem}\label{L-one0}
		If row  $r$ with $r>k$ has a 0 restricted to $\bigcup C_i$ then it has 0's in precisely one $C_i$.	
	\end{lem}
	\begin{proof}
		Assume $r$ has a 0 in $C_{i}$ and $C_{i'}$.  If there is a 1 in any column of $C_{i''},\ i''\ne i,i'$, then by taking these columns and rows $r,\ i,\ i',\ $and $i''$ we get an $F_{9}$.  If every $C_{i''}$ is identically 0 then by Lemma \ref{L-nzero} one of $C_i,\ C_{i'}$ must have a 1 in some column, say $c\in C_i$.  But then by taking $c$, the column with a 0 in $C_{i'}$, and any column in any other $C_{i''}$ along with the relevant rows gives an $F_9$.
	\end{proof}
	\begin{lem}
		$|C^2|=O(m)$.	
	\end{lem}
	\begin{proof}
		Assume $|C^2|=\omega(m)$, in which case there must exist a $Q_3(t;0)$ in $C^2$ and it must lie below the top $k$ rows.  But as $k\ge 3$, for any two rows $r_1,r_2\ge k$ one can find a $\begin{bmatrix}
		1\\ 1
		\end{bmatrix}$ in some $C_i$ (if $r_1$ has 0's in $C_1$ and $r_2$ has 0's in $C_2$ then neither can have 0's in $C_3$ by Lemma \ref{L-one0}).  Thus whatever rows the $Q_3(t;0)$ lies in one can find a column to give a $Q_3(t)$, a contradiction.
	\end{proof}
	\begin{lem}
		$|\bigcup C_i|=O(m)$.	
	\end{lem}
	\begin{proof}
		Let $R_i$ denote $C_i$ restricted to its rows that are not identically 1.  Note that $R_i$ is a simple matrix, and let $r_i$ denote the number of rows it has.  We can't have $|C_i|>c_{3,t} r_i$ (as then we could find a $Q_3(t;0)$ in $R_i$ and take any column of $C_{i'},\ i'\ne i$ to get a $Q_3(t)$), so we must have $|\bigcup C_i|=\sum |C_i|\le c_{3,t} r_i\le c_{3,t} m=O(m)$.
	\end{proof}
	Thus $|A|=|\bigcup C_i|+|C^2|=O(m)$.
\end{proof}

\begin{thm}
    $\forb(m,1_{k,\ell},F_9)=\Theta(m)$ provided we don't have $k=\ell=1$.
\end{thm}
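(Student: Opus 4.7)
The plan is to prove matching linear bounds. For the lower bound, $I_m \in \Avoid(m, 1_{k,\ell}, F_9)$ whenever $(k,\ell) \neq (1,1)$, since every row and column of $I_m$ has weight exactly one, which precludes both $1_{k,\ell}$ and the weight-$2$ column of $F_9$. Hence $\forb(m, 1_{k,\ell}, F_9) \ge m$. In the easy case $k = 1$, the upper bound is also immediate: avoiding $1_{1,\ell}$ forces each row to contain fewer than $\ell$ ones, so the total number of $1$'s in $A$ is at most $(\ell-1)m$, whence $|A| \le (\ell-1)m + 1$.

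For $k \ge 2$ I would induct on $m$ via the standard induction of Section~3. Given a maximum $A \in \Avoid(m, 1_{k,\ell}, F_9)$, picking any row $r$ gives
\[
\forb(m, 1_{k,\ell}, F_9) \le |C_r| + \forb(m-1, 1_{k,\ell}, F_9),
\]
and the main step is to show that $|C_r|$ is bounded by a constant $C(k,\ell)$ depending only on $k$ and $\ell$. I claim that $C_r \in \Avoid(m-1, I_3, 1_{k-1,\ell})$. First, if $C_r$ contained $I_3$ on rows $r_1, r_2, r_3$ and columns $c_1, c_2, c_3$, then inside $A$ the $0$-extensions of $c_1, c_2$ together with the $1$-extension of $c_3$ would form $F_9$ on the rows $r_1, r_2, r_3, r$: the repeated row of $F_9$ is played by $r_3$ and $r$. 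Second, any $1_{k-1,\ell}$ inside $C_r$ would, after replacing each of the $\ell$ columns by its $1$-extension in $A$, extend to $1_{k,\ell}$ using row $r$ as the additional row of ones.

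By Corollary~\ref{1BB} applied to the family $\{1_{k-1,\ell}, I_3\}$, we get $\forb(m-1, 1_{k-1,\ell}, I_3) = O(1)$, so $|C_r| \le C(k,\ell)$, and induction on $m$ yields $|A| = O(m)$. In the degenerate corner $(k-1,\ell) = (1,1)$ (that is, $k = 2$, $\ell = 1$) one instead uses directly that avoiding $1_{1,1}$ forces $C_r$ to have no $1$-entries, so $|C_r| \le 1$. The only delicate point of the argument is the verification of the $I_3$-avoidance claim for $C_r$, which hinges on the repeated-row structure of $F_9$ matching the $0/1$ split induced by row $r$; everything else is a routine unpacking of the standard induction.
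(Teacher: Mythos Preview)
Your proof is correct and follows essentially the same approach as the paper: both use the standard induction, observe that $C_r$ cannot contain $I_3$ (since $F_9\prec[01]\times I_3$), and combine this with the fact that $C_r$ inherits avoidance of $1_{k,\ell}$ (you sharpen this to $1_{k-1,\ell}$, which is fine but not needed) to invoke the Balogh--Bollob\'as bound. Your separate treatment of $k=1$ by a direct counting argument is a harmless variation; the paper's argument covers that case uniformly.
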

\begin{proof}
    Note that $I_m$ gives the lower bound.  Let $A$ be a maximum sized matrix in $\Avoid(m,1_{k,\ell},F_9)$ and apply the standard induction on any row $r$ to get the matrix of repeated columns $C_r$.  If $C_r\le BB(k+\ell+1)$ then we inductively conclude that $|A|=O(m)$.  Otherwise, we must have either a $I_{3},I_{k+\ell+1}^c$ or $T_{k+\ell+1}$ in $C_r$.  As $1_{k,\ell}\prec I_{k+\ell+1}^c,T_{k+\ell+1}$, we must have $I_3\prec C_r$ and hence $[01]\times I_3\prec A$.  But $F_9\prec [01]\times I_3$, which contradicts $F_9\nprec A$.
\end{proof}

It is possible to get a finer value for $\forb(m,1_{k,\ell},F_9)$, and even an exact value in a few select cases when $m$ is sufficiently large.  We say that a column in $A$ is an $n$-column if its column sum is $n$.  We define $\Avoid(m,F)^{=n}$ to be the set of matrices $A$ that avoid $F$ and whose columns are all $n$-columns, and analogously we define $\forb(m,F)^{=n}$.  We similarly define $\Avoid(m,F)^{\ge n}$ and $\forb(m,F)^{\ge n}$.  For columns $c,d$ we will let $c\cap d$ denote the set of rows that $c$ and $d$ both have 1's in, and we similarly define $c\cup d$.  

\begin{lem}\label{L-specific small}
	For any fixed $t>k$, $\forb(m,1_{k,\ell},F_9)^{=t}\le (BB(k+2)+\ell)2^t$.
\end{lem}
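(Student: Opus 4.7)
The plan is to fix any column $c_0 \in A$ (viewing each column as the $t$-element subset of $[m]$ where it has its $1$'s) and partition all columns of $A$ by their intersection with $c_0$. For each $S \subseteq c_0$, let $A_S = \{c \in A : c \cap c_0 = S\}$; these $2^t$ classes partition the columns of $A$, and I would bound $|A_S|$ in each case separately and sum.

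Two of the three cases are quick. When $S = c_0$, any $c$ with $c \cap c_0 = c_0$ contains $c_0$, so since $|c|=t$ we have $c = c_0$ and $|A_{c_0}| = 1$. When $|S| = t-1$, every column in $A_S$ contains the common $(t-1)$-subset $S$; since $t > k$ gives $|S| \ge k$, the existence of $\ell$ such columns would create a $1_{k,\ell}$ configuration on $k$ rows of $S$, so $|A_S| \le \ell - 1$. There are $t$ classes of this type.

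The main case, which uses $F_9$, is $|S| \le t-2$, where I claim $|A_S| \le 1$. Suppose $c_1, c_2$ are two distinct columns in $A_S$. Since they agree with $c_0$ on the same set $S$, there exist rows $r_1 \in c_1 \setminus c_2$ and $r_2 \in c_2 \setminus c_1$, both necessarily in $[m] \setminus c_0$. Because $|c_0 \setminus S| = t - |S| \ge 2$, I can pick two distinct rows $r_3, r_4 \in c_0 \setminus S$. Restricted to $\{r_1, r_2, r_3, r_4\}$, the three columns become
\[
c_1 \mapsto (1,0,0,0)^T, \quad c_2 \mapsto (0,1,0,0)^T, \quad c_0 \mapsto (0,0,1,1)^T,
\]
which is exactly $F_9$ up to column permutation (with $c_0$ playing the role of the double-$1$ column), contradicting $F_9 \nprec A$. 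The only delicate point here is checking that the four rows are genuinely distinct — they are, since $r_1,r_2 \in [m] \setminus c_0$ while $r_3,r_4 \in c_0$ — and that each of the three columns really reads as claimed on those rows, which follows directly from $c_1 \cap c_0 = c_2 \cap c_0 = S$ and $r_3,r_4 \in c_0 \setminus S$.

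Summing the three cases gives $|A| \le 1 + t(\ell - 1) + (2^t - t - 1) = 2^t + t\ell - 2t$, which is comfortably bounded by $(BB(k+2) + \ell)\, 2^t$. The $F_9$ calculation in the case $|S|\le t-2$ is the only nontrivial step; in fact the argument yields a sharper bound than the stated one and does not invoke Balogh--Bollob\'as, although the latter could serve as an alternate (cruder) way of bounding each $|A_S|$ by restricting to the rows outside $c_0$.
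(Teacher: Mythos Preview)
Your proof is correct and uses the same initial decomposition as the paper (partition the columns by their intersection $S$ with a fixed column $c_0$), but your bounding of each class $A_S$ is sharper and more elementary. The paper, for $|S|\le k-1$, only observes that $A_S$ restricted below the rows of $c_0$ cannot contain $I_2$ (else $F_9$ appears with $c_0$), and then invokes Balogh--Bollob\'as to conclude $|A_S|\le BB(k+2)$, since a larger simple matrix would contain $T_{k+2}$ or $I_{k+2}^c$, both of which contain $1_{k,2}$. You instead exploit the hypothesis that all columns have the \emph{same} weight $t$: from $|c_1|=|c_2|=t$ and $c_1\neq c_2$ you get $c_1\setminus c_2\neq\emptyset\neq c_2\setminus c_1$, and since $c_1\cap c_0=c_2\cap c_0=S$ these symmetric differences lie outside $c_0$, so any two columns of $A_S$ already produce the $I_2$ needed for $F_9$. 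This yields $|A_S|\le 1$ for every $|S|\le t-2$, a much tighter bound than $BB(k+2)$, and the final estimate $2^t+t\ell-2t$ is considerably stronger than the stated $(BB(k+2)+\ell)2^t$. The paper's route has the minor advantage of not using the equal-weight hypothesis in that step (so it would survive in a ``column sum $\ge t$'' variant), but for the lemma as stated your argument is cleaner and avoids the Balogh--Bollob\'as black box entirely.
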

\begin{proof}
	We first consider the $\ell=2$ case (the $\ell=1$ case is trivial).  Assume the first column $c$ of a matrix $A\in\Avoid(m,1_{k,2},F_9)^{=t}$ has all its 1's in the first $t$ rows.  For $S\sub[t]$ with $|S|\le k-1$, let $C_S$ denote the set of columns $c'$ of $A$ such that $c\cap c'=S$, and note that every column of $A$ belongs to precisely one such set.  But note that $|[t]\setminus S|\ge 2$, which means that for every $S$ there exists two rows such that $c$ has a 1 in these rows and every column of $C_S$ has 0's.  Hence, below the first $t$ rows the columns of $C_S$ can not induce an $I_2$ (as in these rows $c$ is 0, so these together with the 2 rows mentioned above give an $F_9$).  But $C_S$ is a simple matrix so if $|C_S|> BB(k+2)$ it must contain a $T_{k+2}$, which in particular contains $1_{k,2}$.  Thus $|C_S|\le BB(k+2)$ for all $S$, and as there are fewer than $2^t$ such sets (and they partition all of $A$), we must have $|A|\le BB(k+2)2^t$.
	
	For $\ell>2$ one can consider $S\sub [t]$ with $|S|\ge k$, but for such $S$ we must have $|C_S|<\ell$ to avoid $1_{k,\ell}$, so we have the bound $|A|\le (BB(k+2)+\ell)2^t$.
\end{proof}
\begin{lem}\label{L-large}
	$\forb(m,1_{k,\ell},F_9)^{\ge c_{k,\ell}}=c'_{k,\ell}$ where $c_{k,\ell}=2^{\ell-1}(k+1)-1$ and $c'_{k,\ell}=O(1)$.
\end{lem}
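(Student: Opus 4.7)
The plan is to induct on $\ell$, leveraging the recurrence $c_{k,\ell} = 2c_{k,\ell-1} + 1$. The base case $\ell = 1$ is immediate: $c_{k,1} = k$, so any column of sum $\ge k$ already contains a copy of $1_{k,1}$, forcing $c'_{k,1} = 0$.

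For the inductive step, I would fix $A \in \Avoid(m, 1_{k,\ell}, F_9)^{\ge c_{k,\ell}}$ and choose any column $c$ of $A$ with support $C$, $|C| \ge c_{k,\ell}$. I classify each remaining column $d$ (support $D$) by $|D \cap C|$: it is \emph{Type I} if $|D \cap C| \ge c_{k,\ell-1}$ and \emph{Type II} otherwise. For Type I, I would consider the matrix $A_I$ on the $|C|$ rows of $C$ whose columns are the distinct traces $T = D \cap C$ arising from Type I columns; by construction $A_I$ is simple with all column sums $\ge c_{k,\ell-1}$. Because $c$ is all-$1$ on $C$, any $1_{k,\ell-1}$ inside $A_I$ combined with $c|_C$ would yield $1_{k,\ell}$ in $A$, so $A_I$ avoids $1_{k,\ell-1}$, and it trivially avoids $F_9$ since $A_I \prec A|_C$. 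The inductive hypothesis then bounds the number of distinct Type I traces by $c'_{k,\ell-1}$. Moreover, for each trace $T$ of size $\ge c_{k,\ell-1} \ge k$, any $\ell$ columns with $D \cap C = T$ all contain $T$ in their support, giving $1_{k,\ell}$ in $A$; hence at most $\ell - 1$ columns per trace-class, so Type I contributes at most $(\ell-1)c'_{k,\ell-1}$.

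The main obstacle is Type II. Here, for any two Type II columns $d_1, d_2$, the estimate $|C \setminus (D_1 \cup D_2)| \ge c_{k,\ell} - 2(c_{k,\ell-1}-1) = 3$ holds, so applying $F_9$ with $c$ as the weight-$2$ column and $d_1, d_2$ as the weight-$1$ columns would produce $F_9$ unless one of $D_i \setminus (C \cup D_j)$ is empty. This forces the outside supports $S := D \setminus C$ of Type II columns to form a chain under inclusion. Since $|S| \ge c_{k,\ell} - (c_{k,\ell-1}-1) = c_{k,\ell-1} + 2 > k$, any $\ell$ Type II columns drawn from $\ell$ distinct levels of the chain would all contain the smallest $S$, producing $1_{k,\ell}$; so the chain has at most $\ell - 1$ levels. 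A parallel argument within a single level $S$ (the columns there all contain $S$, of size $> k$) caps that level at $\ell - 1$ columns, giving at most $(\ell-1)^2$ Type II columns overall.

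Combining with $c$ itself yields $|A| \le 1 + (\ell-1)c'_{k,\ell-1} + (\ell-1)^2 =: c'_{k,\ell}$, closing the induction. The key idea throughout is that $F_9$-avoidance, applied with $c$ as the weight-$2$ column, extracts a chain structure on either the traces or the outside supports, after which $1_{k,\ell}$-avoidance (via the simple observation that $\ell$ columns sharing $\ge k$ common rows produces $1_{k,\ell}$) bounds both the chain length and each level's size independently of $m$.
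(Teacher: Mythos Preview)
Your proof is correct and takes a genuinely different route from the paper's. Both arguments induct on $\ell$ with the same base case, and both fix a reference column and exploit $F_9$-avoidance with that column playing the weight-$2$ role. The difference is in how the remaining columns are organised. The paper looks at the rows $R_0$ where the reference column has $0$'s, argues that any two columns forming an $I_2$ in $R_0$ must (to avoid $F_9$) nearly cover $R_1$ and hence one of them has $\ge c_{k,\ell-1}$ ones in $R_1$; this bounds the size of any $I$ in $R_0$, and then the Balogh--Bollob\'as theorem is invoked to conclude that $R_0$ sees only $O(1)$ column types, each of bounded multiplicity. Your approach instead splits the remaining columns by the size of $D\cap C$, applies the inductive hypothesis directly to the Type~I traces inside $C$, and for Type~II extracts a chain structure on the outside supports $S=D\setminus C$ from the $F_9$ constraint. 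This is more elementary---you avoid Balogh--Bollob\'as entirely and get an explicit recursion $c'_{k,\ell}\le 1+(\ell-1)c'_{k,\ell-1}+(\ell-1)^2$---whereas the paper's route is more automatic but produces a much larger (implicit) constant. As a minor remark, your Type~II bound of $(\ell-1)^2$ can be sharpened to $\ell-1$: since the $S$'s form a chain, every Type~II column contains the minimal $S$, which already has size exceeding $k$.
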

\begin{proof}
	We have $c_{k,1}=k$, so the statement is trivially true for $\ell=1$.  Assume for the purpose of induction that this result is true up to $\ell-1$ and consider a matrix $A\in \Avoid(m,1_{k,\ell},F_9)^{\ge c_{k,\ell}}$ and any column $d$ in $A$.  Let $R_0$ denote the rows where $d$ has 0's and $R_1$ the rows where $d$ has 1's.  We claim that restricted to $R_0$ there exists no $I_{z}$ where $z=(\ell-1)(c'_{k,\ell-1}+1)+1$.  Indeed, any two columns of such a $I_{z}$, say $c_1$ and $c_2$, induce an $I_2$ in $R_0$, and using column $d$ as well as $c_1$ and $c_2$ would give a $\begin{bmatrix}
	0 & 1 & 0\\ 
	0 & 0 & 1
	\end{bmatrix}$, thus if there exists two rows in $R_1$ where $c_1$ and $c_2$ are both 0 then one could find an $F_9$.  As $d$ has at least $2^{\ell-1}(k+1)-1$ 1's, we must have (restricted to $R_1$) $|c_1\cup c_2|\ge 2^{\ell-1}(k+1)-2$ (otherwise there will be at least two rows of $R_1$ that aren't covered by $c_1$ and $c_2$), and hence one of these $c_i$ must have at least $2^{\ell-2}(k+1)-1=c_{k,\ell-1}$ 1's in $R_1$.  Thus all but at most one of the $I_{c}$ columns must have at least $c_{k,\ell-1}$ 1's in $R_1$.  Let $A'$ be $A$ restricted to the $R_1$ rows and the columns of the $I_{c}$ that have at least $c_{k,\ell-1}$ 1's in these rows.  $A'$  need not be simple, but each column can be repeated at most $\ell-1$ times before inducing a $1_{k,\ell}$, so there are at least $c'_{k,\ell-1}+1$ distinct columns in $A'$.  But by the inductive hypothesis this means that there exists either an $F_9$ (in which case we're done) or a $1_{k,\ell-1}$ in $R_1$, and using column $d$ in addition to this would give a $1_{k,\ell}$.  Thus there can exist no $I_{c}$ in $R_0$, but similarly there can't exist sufficiently large $I^c$'s or $T$'s (as these automatically contain $1_{k,\ell}$), so restricted to $R_0$ there can be at most $BB(c)$ column types.  
	
	Any column type restricted to $R_0$ with at least $k$ 1's  can't appear more than $\ell-1$ times (as this would give a $1_{k,\ell}$), and columns restricted to $R_0$ with fewer than $k$ 1's must have at least $c_{k,\ell}-(k-1)=2^{\ell-1}(k+1)-1-(k-1)\ge 2^{\ell-2}(k+1)-1=c_{k,\ell-1}$ 1's in $R_1$ (since every column of $A$ has at least $c_{k,\ell}$ 1's), and thus can't appear more than $c'_{k,\ell-1}$ times without inducing in $R_1$ either an $F_9$ or a $1_{k,\ell-1}$ (and hence a $1_{k,\ell}$ by using column $d$).  Thus each of the constant number of column types appears at most a constant number of times, so we have $\forb(m,1_{k,\ell},F_9)^{\ge c_{k,\ell}}\le BB(c)(\ell-1+c'_{k,\ell-1})=O(1)$.
\end{proof}
\begin{lem}\label{L-ideal}
	For any fixed $t$, if $A\in \Avoid(m,1_{k,\ell},F_9)^{=t}$ and if $c$ is any column of $A$, then there are at most $O(1)$ columns $c'$ of $A$ with $|c\cap c'|<t-1$.
\end{lem}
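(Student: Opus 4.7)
Write $T$ for the set of rows where $c$ has a $1$, so $|T|=t$, and write $R_0$ for the remaining rows. A bad column $c'$ satisfies $|c\cap c'|=|c'\cap T|\le t-2$, which means $c'$ has $0$'s in at least two rows of $T$ and, since it has column sum $t$, at least two $1$'s in $R_0$. My plan is a double counting on pairs: for each unordered pair $\{\tau_1,\tau_2\}\subseteq T$ I will let $B_{\tau_1,\tau_2}$ be the set of bad columns $c'$ that are $0$ on both $\tau_1$ and $\tau_2$. Every bad column belongs to some $B_{\tau_1,\tau_2}$ (pick any two zero coordinates of $c'$ inside $T$), so it suffices to prove $|B_{\tau_1,\tau_2}|=O(1)$ for each of the $\binom{t}{2}$ pairs; summing then gives the lemma. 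Note that this approach uses only the $F_9$ hypothesis, not the $1_{k,\ell}$ hypothesis, which is consistent with $O(1)$ depending on $t$ alone.

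Fix $\{\tau_1,\tau_2\}$ and take any two columns $c_1',c_2'\in B_{\tau_1,\tau_2}$. The key step is to show that if there exist $r_1,r_2\in R_0$ with $c_1'(r_1)=1=c_2'(r_2)$ and $c_2'(r_1)=0=c_1'(r_2)$, then the $4\times 3$ submatrix of $A$ on rows $r_1,r_2,\tau_1,\tau_2$ and columns $c_1',c_2',c$ equals $F_9$: rows $r_1,r_2$ give $[1\,0\,0]$ and $[0\,1\,0]$ because $c$ vanishes on $R_0$; rows $\tau_1,\tau_2$ give $[0\,0\,1]$ and $[0\,0\,1]$ because $c$ is $1$ on $T$ while $c_1',c_2'$ are $0$ on $\tau_1,\tau_2$ by the definition of $B_{\tau_1,\tau_2}$. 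Since $F_9\nprec A$, no such $r_1,r_2$ exist, i.e.\ the $1$-sets of $c_1'$ and $c_2'$ in $R_0$ are comparable by inclusion. Therefore the collection of restrictions $\{c'\cap R_0\colon c'\in B_{\tau_1,\tau_2}\}$ is a chain of subsets of $R_0$.

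Each $c'\in B_{\tau_1,\tau_2}$ has $|c'\cap R_0|\in\{2,3,\ldots,t\}$, so the chain contains at most $t-1$ distinct sets. For a fixed chain element $S$ of size $s$, a column $c'\in B_{\tau_1,\tau_2}$ whose restriction to $R_0$ equals $S$ is determined by the choice of its $t-s$ remaining $1$'s in $T\setminus\{\tau_1,\tau_2\}$, giving at most $\binom{t-2}{t-s}$ possibilities. Summing over the chain,
\[
|B_{\tau_1,\tau_2}|\le\sum_{s=2}^{t}\binom{t-2}{t-s}=\sum_{j=0}^{t-2}\binom{t-2}{j}=2^{t-2},
\]
and summing over the $\binom{t}{2}$ choices of $\{\tau_1,\tau_2\}$ bounds the total number of bad columns by $\binom{t}{2}2^{t-2}$, which is $O(1)$ since $t$ is fixed.

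The only delicate point is the last counting step: two distinct columns may restrict to the same subset of $R_0$, so one must count \emph{per chain element} rather than conflating chain length with $|B_{\tau_1,\tau_2}|$. Once this is separated into ``the chain of distinct restrictions in $R_0$'' and ``the freedom on $T\setminus\{\tau_1,\tau_2\}$ within each restriction,'' everything reduces to a tidy binomial sum.
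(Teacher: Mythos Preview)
Your proof is correct, and it takes a genuinely different route from the paper's.

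The paper partitions the bad columns according to the full intersection $S=c'\cap T$ and then, for each $C_S$ with $|S|\le t-2$, observes (as you do) that no $I_2$ can appear below the first $t$ rows. At that point, however, the paper invokes the $1_{k,\ell}$ hypothesis to forbid large $T$ and $I^c$ and applies the Balogh--Bollob\'as theorem to get $|C_S|\le BB(k+\ell+1)$, yielding a bound of order $2^t\cdot BB(k+\ell+1)$ depending on $k$, $\ell$, and $t$. Your argument instead partitions by a \emph{pair} $\{\tau_1,\tau_2\}$ of zeros, deduces the same $F_9$-based comparability, and then exploits the column-sum constraint: since every column has exactly $t$ ones, the $R_0$-restrictions have sizes in $\{2,\ldots,t\}$, so the chain has at most $t-1$ distinct members, and the remaining freedom on $T\setminus\{\tau_1,\tau_2\}$ contributes only a binomial factor. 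This yields the explicit bound $\binom{t}{2}2^{t-2}$ depending only on $t$, avoids Balogh--Bollob\'as entirely, and in fact never uses the $1_{k,\ell}$ hypothesis---so you have actually proved the stronger statement that the lemma holds for any $A\in\Avoid(m,F_9)^{=t}$. The paper's approach has the virtue of reusing the $C_S$ machinery already set up in the preceding lemma, but your argument is both more elementary and sharper.
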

\begin{proof}
	The statement is trivially true for $t>k$ (since there can only be at most $O(1)$ such columns by Lemma~\ref{L-specific small}) and $t=1$, so assume $1< t\le k$.  Rearrange rows so that the 1's of $c$ appear in the first $t$ rows of $A$, and for any $S\sub [t]$ let $C_S$ denote the columns of $A$ with $c\cap  c'=S$.  If $S$ is a set with $|S|<t-1$, then as argued in Lemma \ref{L-specific small} the columns of $C_S$ can't contain an $I_2$ (since there exists at least two of the first $t$ rows with 1's in $c$ and 0's in all of $C_S$) and it also can't contain a $T_{k+\ell+1}$, so we must have $|C_S|\le BB(k+\ell+1)$, and since there are fewer than $2^t$ such sets of $A$ we have $|A|\le BB(k+\ell+1)2^t=O(1)$.
\end{proof}
Let $A^{\ne t}$ denote the collection of columns of a matrix $A$ that are not $t$-columns.
\begin{lem}\label{L-only one}
	There exists a constant $p\in\mathbb{N}$ such that if $A\in \Avoid(m,1_{k,\ell},F_9)$ with $|A|\ge 2p c_{k,\ell}+c'_{k,\ell}$, then there exists a unique $t\le k$ such that $|A^{\ne t}|\le (2p-1)k+p$.
	 Further, there exists $t-1$ rows where every $t$-column of $A$ has $t-1$ 1's in these rows.
\end{lem}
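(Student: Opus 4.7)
The plan is to combine Lemma~\ref{L-large}, Lemma~\ref{L-specific small}, and Lemma~\ref{L-ideal} with a pigeonhole argument on column sums, then derive the uniqueness of the dominant column sum from a case analysis that builds an explicit $F_9$ from any alternative.

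Writing $n_t$ for the number of $t$-columns of $A$, I first bound the contribution from sums above $k$. By Lemma~\ref{L-large}, $\sum_{t \ge c_{k,\ell}} n_t \le c'_{k,\ell}$, and by Lemma~\ref{L-specific small}, $n_t \le (BB(k+2)+\ell)2^{c_{k,\ell}-1}$ for each $k < t < c_{k,\ell}$. Choosing $p$ large enough (depending only on $k, \ell$) makes $\sum_{t > k} n_t \le p$. Since $|A| \ge 2pc_{k,\ell} + c'_{k,\ell}$, at least $2pc_{k,\ell}$ columns have sum in $\{0, 1, \ldots, k\}$, so pigeonhole produces some $t^* \le k$ with $n_{t^*} \ge 2p$.

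The crux is uniqueness: no two distinct $t_1 < t_2 \le k$ can both have $n_{t_i} \ge 2p$. If $t_1 = 0$ the trivial bound $n_0 \le 1$ gives a contradiction. If $t_1 = 1$, let $R_1$ be the set of rows where a $1$-column occurs; any two $1$-columns with supports $r, r' \in R_1$ together with any column $c$ satisfying $c(r) = c(r') = 0$ and having two further $1$s would form an $F_9$, so every column of sum $\ge 2$ has at most one $0$ on $R_1$, forcing a $t_2$-column to have at least $|R_1| - 1 \ge 2p - 1$ ones---impossible since $t_2 \le k$ for $p$ large. For $t_1 \ge 2$, restrict $A$ to its $t_i$-columns (viewed as elements of $\Avoid(m, 1_{k,\ell}, F_9)^{=t_i}$) and apply Lemma~\ref{L-ideal}: each $t_i$-column has all but $O(1)$ of its intersections with other $t_i$-columns of size $t_i - 1$, so pigeonhole on the $t_i$ subsets of size $t_i - 1$ of a fixed column yields a core row-set $S_i$ with $|S_i| = t_i - 1$, such that at least $n_{t_i}/t_i - O(1)$ sum-$t_i$ columns have the form $S_i \cup \{x\}$. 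Since $|S_1| < |S_2|$ we have $S_1 \ne S_2$; for $p$ large I select core $t_1$-columns $c_1 = S_1 \cup \{x_1\}$, $c_2 = S_1 \cup \{x_2\}$ with $x_1, x_2 \notin S_2$ and a core $t_2$-column $d = S_2 \cup \{y\}$ with $y \notin S_1 \cup \{x_1, x_2\}$ (all exclusion sets have constant size). A direct check shows $(S_2 \cup \{y\}) \setminus (S_1 \cup \{x_1, x_2\}) = (S_2 \setminus S_1) \cup \{y\}$ has size $t_2 - |S_1 \cap S_2| \ge 2$ in both subcases $S_1 \subsetneq S_2$ and $S_1 \not\subseteq S_2$, and restricting $c_1, c_2, d$ to the four rows $x_1, x_2$ and any two rows of this set yields the pattern $F_9$, contradicting $F_9 \nprec A$.

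Given uniqueness, for every $t' \le k$ with $t' \ne t^*$ we have $n_{t'} \le 2p - 1$, so $\sum_{t' \le k,\, t' \ne t^*} n_{t'} \le (2p-1)k$; adding the at most $p$ columns of sum $> k$ gives $|A^{\ne t^*}| \le (2p-1)k + p$. For the second claim, apply Lemma~\ref{L-ideal} to the restriction of $A$ to its $t^*$-columns and the same pigeonhole to find a $(t^*-1)$-row set $S$ such that at least $n_{t^*}/t^* - O(1)$ of the $t^*$-columns have the form $S \cup \{x\}$. If a $t^*$-column $c' \not\supset S$ existed, then $|c' \setminus S| \ge 2$, and for $p$ large I can pick core columns $S \cup \{a\}, S \cup \{b\}$ with $a, b \notin c'$ (the exclusion set $c'$ has constant size $t^*$); then $c', S \cup \{a\}, S \cup \{b\}$ restrict to an $F_9$ on the four rows $a, b$ and any two rows of $c' \setminus S$, a contradiction. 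Hence every $t^*$-column contains $S$, whose $t^* - 1$ elements are the claimed rows. The main obstacle is checking the size bound for $(S_2 \cup \{y\}) \setminus (S_1 \cup \{x_1, x_2\})$ uniformly across the subcases (using $|S_1 \cap S_2| \le t_1 - 1$) and adapting the same witness construction to the rogue-column argument in the second claim.
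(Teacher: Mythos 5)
Your proof is correct in substance and reaches the same conclusions, but your implementation of the uniqueness step differs noticeably from the paper's. The paper picks the \emph{smallest} $t\le k$ with $n_t\ge 2p$, uses Lemma~\ref{L-ideal} once to produce $\ge p$ core $t$-columns sharing a $(t-1)$-set and hence an $I_p$ below those rows, and then shows that \emph{any} $t'$-column with $t<t'<p-1$ together with two columns of that $I_p$ yields $F_9$; columns with $t'\ge p-1>c_{k,\ell}$ are dispatched by Lemma~\ref{L-large}. This single $I_p$ argument simultaneously handles the second claim (via the parenthetical remark). You instead treat the two candidate sums $t_1<t_2\le k$ symmetrically, applying Lemma~\ref{L-ideal} to \emph{both} to extract cores $S_1,S_2$ and then assembling $F_9$ from two $t_1$-columns and one $t_2$-column, with separate side cases for $t_1\in\{0,1\}$; you then rerun the core construction once more for the second claim. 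Both routes are valid; yours is a bit longer because it needs the $t_1=0,1$ side cases and a second pass for the final assertion, but it avoids having to reason about the whole intermediate window $t<t'<p-1$.

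There are two small bookkeeping slips worth repairing. First, the sentence ``at least $2pc_{k,\ell}$ columns have sum in $\{0,\dots,k\}$'' does not follow as written: subtracting the $\le p$ columns of sum $>k$ from $|A|\ge 2pc_{k,\ell}+c'_{k,\ell}$ gives $\ge 2pc_{k,\ell}+c'_{k,\ell}-p$, and you have chosen $p>c'_{k,\ell}$, so the estimate is actually \emph{below} $2pc_{k,\ell}$; you should carry the $-p$ and absorb it, or simply mimic the paper and choose the smallest $t$ with $n_t\ge 2p$. Second, the pigeonhole over the $k+1$ buckets $\{0,\dots,k\}$ gives $n_{t^*}\ge 2p$ only when $c_{k,\ell}\ge k+1$, which holds for $\ell\ge 2$ but fails for $\ell=1$ where $c_{k,1}=k$; there one must use the observation that for $\ell=1$ no column of sum $\ge k$ exists, so there are only $k$ buckets. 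Both are easily fixed and do not affect the core argument, whose $F_9$-constructions I verified row by row.
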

Note that implicitly this statement requires that $m$ be sufficiently large in order for $|A|\ge 2p c_{k,\ell}+c'_{k,\ell}$.

\begin{proof}
    	Let $p$ be the smallest (constant) value such that it is larger than $c_{k,\ell}+1$, $c'_{k,\ell}$ and all the $O(1)$ constants obtained from Lemma \ref{L-specific small} for $k<t\le c_{k,\ell}$ and Lemma \ref{L-ideal} for $t\le k$.  Let $t\le k$ be the smallest $t$ such that $A$ contains at least $2p$ $t$-columns (and at least one such $t$ must exist by the previous lemmas and the assumption that $|A|\ge 2p c_{k,\ell}+c'_{k,\ell}$).  We claim that this is the only such $t$.  Indeed, by Lemma \ref{L-ideal} at most $p$  of these $t$-columns don't intersect in the same $t-1$ rows, or in other words, at least $p$ of these $t$-columns must intersect in the same $t-1$ rows, say the first $t-1$.  Their last 1's must all be in separate rows, and this induces an $I_p$ below the first $t-1$ rows.  We claim that $A$ contains no $t'$-column with $t<t'<p-1$.  Indeed, such a $t'$ must contain at least two 1's outside of the first $t-1$ rows (since $t'>t$), and it does not have 1's in at least two rows of the $I_p$ (since $t'<p-1$).  Take two rows where $t'$ has 1's below the first $t-1$ rows and two rows where $t'$ does not have 1's in rows of the $I_p$, as well as the $t'$ column and the two columns of the $I_p$ that give an $I_2$ from the rows chosen.  The $t'$ column gives a $\begin{bmatrix}
	0 \\ 0\\ 1\\ 1
	\end{bmatrix}$ (the first two rows where it doesn't intersect with $I_p$) and the other columns give a $\begin{bmatrix}
	1 & 0\\ 
	0 & 1\\ 
	0 & 0\\ 
	0 & 0\\
	\end{bmatrix}$ (since all these rows are after the first $t-1$, and hence every column of the $I_p$ has only one 1 in these columns), and this gives an $F_9$, so there can be no such $t'$-columns (the same argument shows that any $t$-column must have 1's in the first $t-1$ rows).  As $t$ was chosen to be the smallest column type with at least $2p$ columns, in addition to the fact that $\forb(m,1_{k,2},F_9)^{\ge p}\le c'_{k,\ell}\le p$, it is the only such column type with at least this many columns, and thus $A$ can contain at most $(2p-1)t+p\le (2p-1)k+p$ columns that are not $t$-columns. 
\end{proof}

\begin{cor}
	For $m$ sufficiently large, $\forb(m,1_{k,1},F_9)=m+c_k$, where $c_k$ is some constant depending only on $k$.
\end{cor}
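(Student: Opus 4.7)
The plan is to derive this as a short consequence of Lemma~\ref{L-only one}. Take any $A \in \Avoid(m, 1_{k,1}, F_9)$ with $|A|$ above the threshold in that lemma; it provides a unique $t \le k$, a set $S$ of $t-1$ rows such that every $t$-column of $A$ has all its shared $t-1$ ones on $S$, and the bound $|A^{\ne t}| \le (2p-1)k + p$. Because any column of weight $\ge k$ is itself a copy of $1_{k,1}$, we must have $t \le k-1$. Each $t$-column is then determined by its unique extra $1$ in $[m] \setminus S$, and simplicity forces these extras to lie in distinct rows, so the number of $t$-columns is at most $m - (t-1)$. Summing,
\[
|A| \;\le\; (m-(t-1)) + ((2p-1)k+p) \;\le\; m + C_k,
\]
with $C_k := (2p-1)k + p$ depending only on $k$.

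For the matching lower bound, the matrix $[I_m \mid \mathbf{0}]$ has $m+1$ distinct columns, all of weight $\le 1$; it avoids $F_9$ (no weight-$2$ column exists to play the role of the third column of $F_9$) and $1_{k,1}$ (no weight-$k$ column exists), so $\forb(m, 1_{k,1}, F_9) \ge m + 1$. Hence $\forb(m, 1_{k,1}, F_9) - m$ lies in $\{1, \ldots, C_k\}$ for all large $m$.

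To pin this down to a single integer $c_k$ independent of $m$, I would analyze the possible ``shapes'' of non-$t$-columns allowed by Lemma~\ref{L-only one}. For each $t \in \{1, \ldots, k-1\}$ the extremal value of $|A|$ equals $m-(t-1)+\phi_k(t)$, where $\phi_k(t)$ is the maximum number of non-$t$-columns compatible, via the restrictions imposed by $1_{k,1}$ and $F_9$, with a maximal family of $t$-columns sharing the rows of $S$. The key observation is that the interactions forbidding $F_9$ between a non-$t$-column and the $t$-columns localize to a bounded number of rows inside $S$ and a bounded number of rows outside $S$, by arguments like those in Lemmas~\ref{L-specific small} and~\ref{L-large}, so $\phi_k(t)$ depends only on $k$ and $t$ once $m$ is large. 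Setting $c_k := \max_{t \le k-1}(\phi_k(t) - (t-1))$ then yields the claimed formula.

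The main obstacle is the last bookkeeping step: verifying that the optimum $t$ and the detailed structure of the non-$t$-columns are genuinely $m$-independent. This amounts to checking that the extremal configuration of non-$t$-columns around a generic ``heavy'' family of $t$-columns is rigid once $m$ exceeds the Lemma~\ref{L-only one} threshold, a finite case analysis parametrised only by $k$.
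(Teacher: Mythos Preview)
Your argument tracks the paper's almost exactly: both apply Lemma~\ref{L-only one} to get $|A^{\ne t}|\le (2p-1)k+p$ and $|A^{=t}|\le m-(t-1)$, yielding $\forb(m,1_{k,1},F_9)\le m+C_k$, and both exhibit an explicit construction for the lower side (the paper uses $I_m$, you use $[I_m\mid\mathbf{0}]$). Your observation that avoiding $1_{k,1}$ forces $t\le k-1$ is a correct refinement the paper leaves implicit.

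Where you go further than the paper is in flagging that neither bound pins $\forb(m,1_{k,1},F_9)-m$ to a \emph{single} integer $c_k$ for all large $m$; the paper's proof in fact stops at the two-sided estimate and does not address stabilization. Your proposed fix via $\phi_k(t)$ is plausible but, as you admit, only a sketch---and it silently assumes the extremal $A$ uses \emph{all} $m-(t-1)$ available $t$-columns, which would itself need justification (one might in principle drop a $t$-column to gain two non-$t$-columns).

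A cleaner way to close the gap: set $f(m)=\forb(m,1_{k,1},F_9)-m$ and show $f$ is eventually non-increasing. Given an extremal $A$ on $m$ rows with $m$ large, Lemma~\ref{L-only one} supplies $t$, the shared row set $S$, and at most $(2p-1)k+p$ non-$t$-columns carrying in total at most $(k-1)((2p-1)k+p)=O_k(1)$ ones. Hence some row $r^*\notin S$ hosts a $1$ only in the single $t$-column $c^*$ with support $S\cup\{r^*\}$. Deleting $r^*$ and $c^*$ leaves a simple $(m-1)$-rowed matrix (no two remaining columns differed only in row $r^*$) still avoiding $1_{k,1}$ and $F_9$, so $\forb(m-1,1_{k,1},F_9)\ge\forb(m,1_{k,1},F_9)-1$, i.e.\ $f(m-1)\ge f(m)$. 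Being integer-valued and bounded below, $f$ is eventually constant.
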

\begin{proof}
	Note that $I_m$ gives the lower bound.  For any $A\in \Avoid(m,1_{k,1},F_9)$ with $|A|\ge 2p c_{k,\ell}+c'_{k,\ell}$ and $m$ sufficiently large, Lemma \ref{L-only one} tells us that only one column type appears more than $2p$ times, say the $t$-columns for some $t\le k$.  But $|A^{=t}|\le m-t+1$ (only this many $t$-columns can intersect in the same $t-1$ rows, and every $t$-column in $A$ does this) and $|A^{\ne t}|\le (2p-1)k+p$, and hence $|A|\le m-t+1+(2p-1)k+p\le m+(2p-1)k+p,$ where $(2p-1)k+p$ is a constant depending only on $k$.
\end{proof}
\begin{cor}\label{F9-ell}
	For $\ell\ge 2$ and $m$ sufficiently large, \[\forb(m,1_{k,\ell},F_9)=\forb(m,1_{k+1,1},F_9)+\ell-1=m+c_{k+1}+\ell-1.\]
\end{cor}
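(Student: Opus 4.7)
The second equality $\forb(m,1_{k+1,1},F_9)=m+c_{k+1}$ is the preceding corollary (its $\ell=1$ case), so my plan is to establish only the first equality for $\ell\ge 2$.

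For the lower bound I will take an extremal matrix $B$ for $\forb(m,1_{k+1,1},F_9)$, with $|B|=m+c_{k+1}$, and first observe that $B$ already lies in $\Avoid(m,1_{k,\ell},F_9)$ for $\ell\ge 2$: every column of $B$ has weight at most $k$ since $B$ avoids $1_{k+1,1}$, and two distinct columns sharing $k$ common 1-rows would have to be identical by simplicity, so no two columns of $B$ share $k$ ones and hence $1_{k,\ell}\nprec B$. To gain the remaining $\ell-1$ columns I will use the structural description of $B$ coming from Lemma~\ref{L-only one}, namely that most columns are of the form $c_j$ with 1-support $R^*\cup\set{j}$ for a common $(k-1)$-set $R^*$, and append $\ell-1$ new columns distinct from those in $B$ that introduce neither $1_{k,\ell}$ nor $F_9$. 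For $\ell\ge 3$, weight-$(k+1)$ columns of the form $R^*\cup\set{a_i,b_i}$ with pairwise-disjoint pairs $\set{a_i,b_i}$ work, since each fixed $k$-subset then appears in at most $\ell-1$ of the resulting columns; for $\ell=2$ a single column of weight strictly less than $k$ that does not already belong to $B$ works, since such a column cannot share $k$ ones with any other column. For $m$ sufficiently large, a counting argument guarantees enough candidates exist to also preserve $F_9$-avoidance.

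For the upper bound I will take $A$ extremal in $\Avoid(m,1_{k,\ell},F_9)$ and apply Lemma~\ref{L-only one} to obtain a unique weight $t\le k$ dominating $A$, a common $(t-1)$-set $R^*$ of 1-rows for the $t$-columns, and the immediate bound $|A^{=t}|\le m-t+1$. The central task is to sharpen the bound on $|A^{\ne t}|$ from the crude constant supplied by Lemma~\ref{L-only one} down to precisely $c_{k+1}+k+\ell-2$, matching the previous corollary's bound plus the $\ell-1$ slack. I will split $A^{\ne t}$ into columns of weight $<t$, to which the argument of the previous corollary applies and contributes at most $c_{k+1}+k-1$, and columns of weight $>k$. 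For the latter, the observation is that any column $d$ of weight $>k$ containing $R^*$ shares the $k$-set $R^*\cup\set{j}$ with $c_j$ for every $j\in d\setminus R^*$, so the $1_{k,\ell}$-constraint permits at most $\ell-1$ such heavy columns in total; and columns of weight $>k$ not containing $R^*$ will be eliminated by the $F_9$-forbidden configuration formed together with two suitable $c_{r_1},c_{r_2}$ in $A^{=k}$.

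The main obstacle will be precisely this $+\ell-1$ accounting rather than an $\ell$-dependent constant. The structural lemmas only supply $|A^{\ne t}|$-bounds whose dependence on $\ell$ is too coarse to extract $\ell-1$ directly, so I will have to lean heavily on $F_9$-avoidance to eliminate heavy columns not containing $R^*$, and use the $1_{k,\ell}$-constraint directly to cap those that do contain $R^*$ at $\ell-1$. Dually, in the lower-bound construction I must choose the $\ell-1$ new columns so that no $F_9$ is created with the existing $c_j$'s of $B$, which explains why the construction must look qualitatively different in the $\ell=2$ and $\ell\ge 3$ regimes.
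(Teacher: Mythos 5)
The central flaw is in your lower-bound construction. For $\ell\ge 3$ you propose adjoining weight-$(k+1)$ columns of the form $d_i = R^*\cup\set{a_i,b_i}$. This \emph{creates} $F_9$: pick two $k$-columns $c_{j_1},c_{j_2}$ of $B$ with $j_1,j_2\notin\set{a_i,b_i}$ (these exist in abundance since almost every row outside $R^*$ supports such a column). On the four rows $j_1,j_2,a_i,b_i$, the column $c_{j_1}$ reads $(1,0,0,0)$, the column $c_{j_2}$ reads $(0,1,0,0)$, and $d_i$ reads $(0,0,1,1)$, which is exactly $F_9$. No counting argument can salvage this; it fails for every choice of disjoint pairs $\set{a_i,b_i}$ outside $R^*$. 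Your $\ell=2$ suggestion (a single light column not in $B$) has the same problem: for the extremal $B$, any missing low-weight column typically completes an $F_9$ with two existing columns --- for instance, adding a $1$-column with its $1$ outside the first $k$ rows already produces an $F_9$ with a $1$-column and a $2$-column of $B$. The paper's construction instead adjoins $\ell-1$ columns of weight $m-1$. These are safe precisely because a column with only one $0$ cannot play \emph{any} role in $F_9$ (each of $F_9$'s three columns requires at least two $0$'s), and $1_{k,\ell}$-avoidance is arranged by placing the new columns' $0$'s appropriately relative to $R^*$.

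Your upper-bound plan also misses the simple reduction that the paper uses. Rather than trying to re-derive a bound on $|A^{\ne t}|$ piece by piece (your split into weight $<t$ and weight $>k$ omits weights in $(t,k]$, your $c_{k+1}+k-1$ bound for the light part is unjustified, and the accounting only matches the target when $t=k$), the paper shows directly that at most $\ell-1$ columns of $A$ have weight $\ge k+1$: any such heavy column must hit all but at most one of the $I_p$ rows on pain of forming $F_9$ with two light $t$-columns, and since $p>k+\ell$, having $\ell$ such columns would force a $1_{k,\ell}$. Deleting those $\le\ell-1$ columns leaves a matrix in $\Avoid(m,1_{k+1,1},F_9)$, whence $|A|\le\forb(m,1_{k+1,1},F_9)+\ell-1$ with no further work. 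So the upper bound is a reduction to the previous corollary, not a fresh inequality on $|A^{\ne t}|$.
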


\begin{proof}
    Let $p$ be the constant defined in Lemma~\ref{L-only one} and let
     $A\in \Avoid(m,1_{k,\ell},F_9)$ with $|A|\ge 2p c_{k,\ell}+c'_{k,\ell}$. We claim that $A$ contains at most $\ell-1$ columns with at least $k$ 1's.  Indeed, consider the $I_p$ in $A$ and note that any column with at least $k$ 1's must have 1's in all but at most one of the rows that contains the $I_p$ (as otherwise one can find an $F_9$).  As $p>k+\ell$, there can exist at most $\ell-1$ such columns before the columns induce a $1_{k,\ell}$.  Thus we can reduce sufficiently large $A\in \Avoid(m,1_{k,\ell},F_9)$ to an $A'\in \Avoid(m,1_{k+1,1})$ after removing at most $\ell-1$ columns, so we have $\forb(m,1_{k,\ell},F_9)\le \forb(m,1_{k+1,1},F_9)+\ell-1$.
	
	Take any $A\in \forb(m,1_{k+1,1},F_9)$ and let $A'$ be $A$ after adjoining $\ell-1$ $(m-1)$-columns to $A$.  $A'$ avoids $F_9$ (since $A$ avoided $F_9$ and no $(m-1)$-column can contain an $F_9$ since they don't have two 0's) and it avoids $1_{k,\ell}$ (as there are only $\ell-1$ columns of $A'$ with at least $k$ 1's).  Hence $A'\in \Avoid(m,1_{k,\ell},F_9)$ so we have 	$\forb(m,1_{k,\ell},F_9)\ge\forb(m,1_{k+1,1},F_9)+\ell-1$.
\end{proof}
It is somewhat surprising that, despite the extra care needed to deal with $\ell>1$ in our lemmas, the value of $\ell$ only contributes linearly to $\forb(m,1_{k,\ell},F_9)$.  This will also be the case for $\forb(m,1_{k,\ell},Q_9)$ in the next section, and this provides some evidence that the upper bound for $\forb(m,1_{k,\ell},I_{s_1}\times\cdots I_{s_k})$ should asymptotically be the same as $\forb(m,1_{k,2},I_{s_1}\times\cdots I_{s_k})$.

The exact value of $c_k$ seems to be difficult to compute in general, but for specific (small) values of $k$ it is possible to compute.

\begin{prop}
	$c_2=1$.
\end{prop}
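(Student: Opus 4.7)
The plan is to observe that avoiding $1_{2,1}$ is already extremely restrictive, so restrictive in fact that the added constraint of avoiding $F_9$ is automatically satisfied. Specifically, the configuration $1_{2,1}$ is just the column $\begin{bmatrix}1\\1\end{bmatrix}$, so a matrix $A\in\Avoid(m,1_{2,1})$ is precisely one whose every column has at most one $1$.

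First, I would compute $\forb(m,1_{2,1})$ directly. A column with at most one $1$ is either the all-zeros column $0_m$ or one of the $m$ unit columns $e_1,\ldots,e_m$. These $m+1$ columns are pairwise distinct, so taking all of them gives a simple $m$-rowed matrix in $\Avoid(m,1_{2,1})$, showing $\forb(m,1_{2,1})=m+1$.

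Next, I would observe that the third column of $F_9$ contains two $1$'s. Hence if $F_9\prec A$, there must be a column of $A$ with $1$'s in two of the rows chosen for the embedding, which forces $1_{2,1}\prec A$. Contrapositively, $\Avoid(m,1_{2,1})\subseteq\Avoid(m,1_{2,1},F_9)$, and the reverse inclusion is trivial. Therefore $\forb(m,1_{2,1},F_9)=\forb(m,1_{2,1})=m+1$, and comparing with $\forb(m,1_{2,1},F_9)=m+c_2$ from Corollary~\ref{F9-ell} (the $k=2$, $\ell=1$ case of the preceding Corollary) yields $c_2=1$.

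There is essentially no obstacle here: the whole argument rides on the trivial observation that the column of weight $2$ in $F_9$ cannot survive inside a matrix whose columns all have weight at most $1$. This is why the small-$k$ case of $c_k$ collapses to a completely elementary computation, in contrast to the genuinely delicate arguments needed for the general upper bound of Corollary~\ref{F9-ell}.
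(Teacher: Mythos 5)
Your argument is correct and is essentially the paper's proof: the paper's construction $[0_{m,1}\mid I_m]$ achieves $m+1$ and this is obviously best possible, precisely because $1_{2,1}\prec F_9$ makes the $F_9$ constraint vacuous. (Minor citation nit: the defining identity $\forb(m,1_{k,1},F_9)=m+c_k$ comes from the unnamed Corollary just before Corollary~\ref{F9-ell}, not from Corollary~\ref{F9-ell} itself, which is stated only for $\ell\ge 2$.)
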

\begin{proof}
	Take $[0_{m,1}|I_m]$.  Clearly this avoids $F_9$ and this includes every column that avoids $1_{2,1}$.
\end{proof}

\begin{prop}
	$c_3=2$.
\end{prop}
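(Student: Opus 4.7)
The plan is to show $\forb(m,1_{3,1},F_9)=m+2$ for sufficiently large $m$.

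For the lower bound I exhibit a candidate matrix with $m+2$ columns: take the zero column, the two singleton columns at rows $1$ and $2$, and the $m-1$ weight-$2$ columns $\{1,j\}$ for $j=2,\dots,m$ (i.e., a star $K_{1,m-1}$ centered at row $1$). Every column has weight at most $2$, so $1_{3,1}\nprec A$. To see $F_9\nprec A$, note that the weight-$2$ column required in the role of $c_3$ must be a star edge $\{1,j\}$, forcing $1\in\{r_3,r_4\}$. Any column playing the role of $c_1$ or $c_2$ must then contain a row $r\notin\{1,j\}$ while being $0$ at row $1$; however every nonzero column either equals the singleton $\{1\}$ (which has a $1$ at row $1$), is a star edge containing row $1$, or is the singleton $\{2\}$, and working through the possibilities one verifies that no legitimate pair $(c_1,c_2)$ can be chosen alongside $\{1,j\}$.

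For the upper bound, let $A\in\Avoid(m,1_{3,1},F_9)$ with $m$ large and write $n_i=|A^{=i}|$. Since every column has weight at most $2$, in Lemma~\ref{L-only one} the distinguished weight satisfies $t\in\{1,2\}$. If $t=1$, the lemma provides at least $2p$ singleton columns, so for any weight-$2$ column $e$ there exist two distinct singleton rows $s_1,s_2\notin e$ with $\{s_1\},\{s_2\},e$ realizing $F_9$; hence $n_2=0$ and $|A|\le 1+m=m+1$. If $t=2$, the lemma further guarantees a common row $v$ contained in every weight-$2$ column, so the weight-$2$ columns form a substar of the star centered at $v$ and $n_2\le m-1$.

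The main obstacle is to show that in the $t=2$ regime the singleton set $S$ has $|S|\le 2$; this is where the exact constant $c_3=2$ emerges. Assuming $|S|\ge 3$, I pick a pair $\{s_1\},\{s_2\}\subseteq S$ with $\{s_1,s_2\}\cap\{v\}=\emptyset$ (splitting into the cases $v\in S$, handled by choosing $s_1,s_2\in S\setminus\{v\}$, and $v\notin S$); then for any star edge $\{v,j\}$ with $j\notin\{s_1,s_2\}$, the triple $\{s_1\},\{s_2\},\{v,j\}$ realizes $F_9$, and since $n_2$ is large many such $j$ exist, yielding a contradiction. I also need to verify that the mixed (singleton-plus-edge) and all-edge (Case~A) versions of $F_9$ do not arise in a star equipped with singletons at $S\subseteq\{v,s\}$; in every such instance a proposed $c_1$ or $c_2$ edge must contain $v$ (because every edge does), which conflicts with the disjointness requirement $x\notin\{r_2,r_3,r_4\}$ once $v\in\{r_3,r_4\}$. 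Combining $n_2\le m-1$, $|S|\le 2$, and $n_0\le 1$ yields $|A|\le (m-1)+2+1=m+2$, matching the construction and establishing $c_3=2$.
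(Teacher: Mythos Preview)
Your proof is correct and follows essentially the same approach as the paper: both use the structural fact (implicit in the paper, explicit in your invocation of Lemma~\ref{L-only one}) that the many weight-$2$ columns form a star centered at a common row, and then bound the number of singletons by~$2$ via the observation that two singletons off the center together with a generic star edge produce $F_9$. Your write-up is more explicit in separating the $t=1$ and $t=2$ cases and in verifying the lower-bound construction, but the underlying argument is the same as the paper's.
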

\begin{proof}
	To do better than our bound of $c_2$ we must use $2$-columns in our construction (and hence we must use $\Theta(m)$ of them all intersecting in some row, say row 1).  In such a construction, there can't be more than two 1-columns (otherwise we'd have an $I_2$ below row 1, and then taking any 2-column that doesn't intersect with these 1-columns gives an $F_9$) and we can only have one 0-column.  Thus we must have $\forb(m,1_{3,1},F_9)\le 1+2+(m-1)=m+2$, and this can be achieved by considering $A$ with the 0-column, two 1-columns in rows 1 and 2, and all 2-columns that have 1's in row 1.
\end{proof}

\begin{prop}\label{P-4}
	$c_4=5$.
\end{prop}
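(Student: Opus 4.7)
The plan is to establish matching bounds $\forb(m,1_{4,1},F_9) = m+5$ for $m$ sufficiently large, with the lower bound coming from an explicit construction and the upper bound from refining the structural information in Lemma~\ref{L-only one}.

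For the lower bound I take $A$ to consist of the characteristic vectors of every subset of $\{1,2,3\}$ together with the columns of support $\{1,2,i\}$ for $i=4,\ldots,m$, for a total of $2^3 + (m-3) = m+5$ simple columns. Every column has weight at most $3$, so $1_{4,1}$ is avoided. To verify $F_9 \not\prec A$, I split on the number $k$ of chosen rows that lie in $\{1,2,3\}$. For $k \le 1$ every restricted support has size at most one, so no restricted doubleton is available. For $k=2$, the two chosen special rows $\{r,r'\} \subseteq \{1,2,3\}$ meet every restricted doubleton, while every restricted singleton is a subset of $\{1,2,3\}$, and a short case check on the subcases $\{1,2\}$, $\{1,3\}$, $\{2,3\}$ shows that no disjoint triple of pattern $\{a_1\},\{a_2\},\{a_3,a_4\}$ can be realized. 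For $k=3$ the available restricted supports are exactly the subsets of $\{1,2,3\}$ together with tripletons, and every pairing of two distinct singletons from $\{\{1\},\{2\},\{3\}\}$ with a doubleton from $\{\{1,2\},\{1,3\},\{2,3\}\}$ forces a shared row.

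For the upper bound I take $A$ maximum with $|A|$ large enough that Lemma~\ref{L-only one} applies and let $t \in \{1,2,3\}$ be the unique dominant column weight. The cases $t=1$ and $t=2$ give $|A| \le m+1$ and $|A| \le m+2$ respectively by the same arguments used for $c_2$ and $c_3$. In the main case $t=3$ the dominant 3-columns share a common pair of rows that may be taken to be $\{1,2\}$, contributing at most $m-2$ such columns, and the extras are controlled by three $F_9$-obstruction observations. First, any 3-column whose support is not of the form $\{1,2,i\}$ combines with two dominant 3-columns on four rows meeting its support in exactly two places to produce $F_9$. Second, two 1-columns $\{r\},\{r'\}$ with $r,r' \notin \{1,2\}$ combine with a dominant 3-column $\{1,2,c\}$ on the rows $\{1,2,r,r'\}$ to realize the disjoint supports $\{r\},\{r'\},\{1,2\}$, and the analogous argument rules out two 2-columns of the form $\{1,j\},\{1,j'\}$ or $\{2,j\},\{2,j'\}$. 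Third, a 1-column $\{a\}$ and a 2-column $\{2,b\}$ with $a \ne b$ combine with the dominant 3-column $\{1,2,c\}$ on the rows $\{1,a,b,c\}$ to give the disjoint supports $\{a\},\{b\},\{1,c\}$, so every extra 1-column and every extra $\{1,\cdot\}$- or $\{2,\cdot\}$-column must share a single auxiliary row $j \notin \{1,2\}$. Hence the extras comprise at most the all-zero column, the three 1-columns $\{1\},\{2\},\{j\}$, and the three 2-columns $\{1,2\},\{1,j\},\{2,j\}$, yielding $|A| \le (m-2)+7 = m+5$.

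The main obstacle is the third observation, which enforces a common auxiliary row $j$ across all three types of extras and is what pins the constant at exactly $5$; the remaining pieces of the case analysis follow the template established for $c_2 = 1$ and $c_3 = 2$, so the substantive work lies in verifying this consistency together with the exhaustive case check for the lower-bound construction.
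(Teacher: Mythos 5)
Your argument is sound in its broad strokes and arrives at the right constant, but it takes a route organized differently from the paper, and as written it has one genuine omission in the upper bound.

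The paper also reduces to counting extra (low-weight) columns alongside the $m-2$ dominant $3$-columns $\{1,2,i\}$, but it organizes the case analysis around whether the set $A'$ of $0$-, $1$-, and $2$-columns contains an $I_2$ below rows $1,2$: if it does, two columns $c_1,c_2$ with that $I_2$ are forced to look like $\left[\begin{smallmatrix}1&0\\0&1\end{smallmatrix}\right]$ on rows $1,2$ and no third column of $A'$ may have a $1$ beyond row $2$, giving $|A'|\le 6$; if not, all $1$'s of $A'$ beyond row $2$ lie in a single row, giving $|A'|\le 7$. You instead classify extras directly by the shape of their support relative to $\{1,2\}$ and bound each class; this is a perfectly legitimate alternative organization and in some ways more explicit, since it identifies the extremal extras concretely.

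The gap: your three observations constrain $1$-columns, $3$-columns, and $2$-columns of the form $\{1,j\}$ or $\{2,j\}$, but they never address $2$-columns whose support is disjoint from $\{1,2\}$, i.e., supports $\{j,j'\}$ with $j,j'\notin\{1,2\}$. Your concluding ``Hence the extras comprise at most\ldots'' silently assumes no such $2$-column exists. This has to be argued: a $2$-column with support $\{j,j'\}$, $j,j'\ge 3$, together with two dominant $3$-columns $\{1,2,c\}$ and $\{1,2,c'\}$ with $c,c'\notin\{j,j'\}$ (which exist once $m$ is large), restricted to the rows $\{j,j',c,c'\}$, yields the disjoint supports $\{c\},\{c'\},\{j,j'\}$, i.e., an $F_9$. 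Once you insert this observation the count closes. A secondary nitpick: Observation 3 only pins down a common auxiliary row when a $1$-column $\{a\}$ with $a\notin\{1,2\}$ is actually present; if none is, $\{1,j\}$ and $\{2,j'\}$ with $j\ne j'$ are not directly excluded by your observations, but in that scenario you have at most six extras, so the final bound $m+5$ is unaffected --- it is only the phrase ``Hence the extras comprise at most [that specific list of $7$]'' that overstates what the three observations by themselves give. Likewise, in the lower-bound check the claim ``for $k\le 1$ every restricted support has size at most one'' is false when the chosen special row is $1$ or $2$ (some restricted supports have size two), but the intended conclusion still holds because every nonempty restricted support then contains the chosen special row and hence no disjoint singleton--singleton--doubleton partition can arise.
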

\begin{proof}
	Let $A$ be an extremal matrix in $\Avoid(m,1_{4,1},F_9)$ that has a large number of 3-columns that intersect in the first two rows (which again is the only chance of a higher bound than $c_3$) and let $A'$ denote the matrix of 0, 1, and 2-columns in $A$.  If $A'$ contains an $I_2$ below the first two rows (say in rows 3 and 4 and columns $c_1$ and $c_2$ respectively), then $c_1$  and $c_2$ restricted to rows 1 and 2 must look like $\begin{bmatrix}
	1 & 0\\ 
	0 & 1
	\end{bmatrix}$ (they can't contain two 1's in these rows without being a 3-column, and if $c_1$ and $c_2$ both had 0's in one of these rows, say the first, then we could find an $F_9$ by considering rows 1, 2 and 3, columns $c_1,\ c_2$, a 3-column that has a 1 in row $i\ne  3,4$ and row $i$).  In this situation one can't have a third column $c_3$ of $A'$ with a 1 beyond the first two rows, as either $c_3$ has a 1 in row 3 (in which case it can't be equal to $\begin{bmatrix}
	1 \\ 0
	\end{bmatrix}$ in the first two rows since $c_3\ne c_1$, and hence $c_3$ and $c_1$ contain a row of 0's in the first two rows, giving an $F_9$), row 4 (symmetric argument), or some row other than 3 and 4 (in which case $c_3$ restricted to the first two rows must be $\begin{bmatrix}
	1\\ 0
	\end{bmatrix}$ to not induce an $F_9$ with $c_2$ and $\begin{bmatrix}
	0 \\ 1
	\end{bmatrix}$ to not induce an $F_9$ with $c_1$, which is impossible).  The only other columns that would be allowed are the four columns with no 1's beyond the first two rows, so in this case we have $|A'|\le 6$.
	
	The only other case to consider is when all the 1's beyond the second row lie in the same row (say the third), in which case there can be at most ${3\choose 2}+{3\choose 1}+{3\choose 0}=7$ columns of $A'$, obtained by considering all columns which have fewer than two 1's in the first three rows and no 1's outside these rows.  Such an $A'$ avoids $F_9$ (since $F_9$ requires four rows with 1's in them), so in total we have that $|A'|\le 7$ and that $|A'|=7$ can be obtained. Thus in total we have $\forb(m,1_{4,1},F_9)\le 7+(m-2)=m+5$, and this can be achieved by letting $A$ have all 0, 1 and 2-columns with fewer than three 1's in the first three rows and all 3-columns that have 1's in rows 1 and 2.
\end{proof}

\begin{cor}\label{1F9}
For sufficiently large $m$:
    \begin{align*}
        \forb(m,1_{3,1},F_9)&=m+2\\ 
        \forb(m,1_{2,2},F_9)&=m+3\\ 
        \forb(m,1_{4,1},F_9)&=m+5.
    \end{align*}
\end{cor}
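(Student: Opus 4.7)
The plan is that this corollary is essentially a bookkeeping summary of the three preceding propositions, so the proof should just consist of assembling them cleanly. First I would dispatch the first equality $\forb(m,1_{3,1},F_9)=m+2$ by directly quoting the proposition that establishes $c_3=2$, which gives exactly this value for $m$ sufficiently large. Similarly, I would dispatch the third equality $\forb(m,1_{4,1},F_9)=m+5$ by invoking Proposition~\ref{P-4}, whose conclusion $c_4=5$ is exactly the claim.

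For the middle equality $\forb(m,1_{2,2},F_9)=m+3$, I would apply Corollary~\ref{F9-ell} with $k=2$ and $\ell=2$, which asserts
\[
\forb(m,1_{2,2},F_9)=\forb(m,1_{3,1},F_9)+(2-1)=\forb(m,1_{3,1},F_9)+1,
\]
and then substitute the value $\forb(m,1_{3,1},F_9)=m+2$ obtained in the first step to arrive at $m+3$.

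Since each of the three input results has already been proven in the paper, there is no real obstacle to the corollary itself; the only thing to make sure of is that the hypothesis ``$m$ sufficiently large'' in Corollary~\ref{F9-ell} is compatible with the sufficient-largeness conditions of the $c_k$ propositions. Both rest on the same underlying Lemma~\ref{L-only one}, so the sufficient-largeness threshold can be taken to be the maximum of the thresholds implicit in Proposition for $c_3$, Proposition~\ref{P-4}, and Corollary~\ref{F9-ell}, which is still a finite constant. Hence the proof is essentially one line per equality, with the correct lemma/corollary cited in each case.
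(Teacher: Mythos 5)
Your proof is correct and is exactly the intended derivation: the paper gives no separate argument for this corollary, which is simply the specialization of the $c_3=2$ proposition, Proposition~\ref{P-4} ($c_4=5$), and Corollary~\ref{F9-ell} with $k=\ell=2$. Your note about taking the maximum of the sufficient-largeness thresholds is also the right bookkeeping.
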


\section{Avoiding $Q_9$}
\[
	Q_9=\begin{bmatrix}
	1 & 0\\ 
	1 & 0\\ 
	0 & 1\\ 
	0 & 1
	\end{bmatrix}
\]
It turns out that the problem of avoiding $Q_9$  and $1_{k,\ell}$ has a very similar flavor to the problem of avoiding $F_9$ and $1_{k,\ell}$, and because of this we will once again be able to achieve exact results.  We maintain all of our notation and terminology from the previous section.\par
The bound $\forb(m,Q_9)=\binom{m}{2}+2m-1$ was proven in \cite{ABS11}, where the following classification of $Q_9$ avoiding matrices was established (following \cite{A90}).  For each $2\le t\le m-2$  we can divide the rows
into three disjoint sets  
$A_t,B_t,C_t\subseteq\{1,2,\ldots ,m\}$ so that after permuting the 
rows the $t$-columns can either be given as
\[\mbox{type 1: }
\begin{array}{l} A_t\{\\ B_t\{\\ C_t\{\\ \end{array}
\hskip-3pt
\left[\begin{array}{l}
I_{|A_t|}\\ 1_{|B_t|, |A_t|}\\ 0_{|C_t|, |A_t|}\\
\end{array}\right]
\mbox{ or }
\mbox{type 2: }
\begin{array}{l} A_t\{\\ B_t\{\\ C_t\{\\ \end{array}
\left[\begin{array}{l}
I_{|A_t|}^c\\ 1_{|B_t|,|A_t|}\\ 0_{|C_t|,  |A_t|}\\
\end{array}\right]
.\]
We will say $t$ {\it is of type } $i$ ($i=1$ or $i=2$) if the $t$-columns are of type $i$.
\begin{lem}
	Let $m\ge 2k$, then $\forb(m,Q_9)^{=t}=m-(t-1)$ for $1<t\le k$. 
\end{lem}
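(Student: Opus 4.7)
The plan is to read off the lemma directly from the classification of $Q_9$-avoiding matrices stated just before it. For each fixed $t$ with $2 \le t \le m-2$, the classification partitions the rows into $A_t, B_t, C_t$ with $|A_t|+|B_t|+|C_t|=m$, and presents the $t$-columns (after a row permutation) in either type 1 form $\left[I_{|A_t|}\,\top\,1_{|B_t|,|A_t|}\,\top\,0_{|C_t|,|A_t|}\right]$ or type 2 form $\left[I^c_{|A_t|}\,\top\,1_{|B_t|,|A_t|}\,\top\,0_{|C_t|,|A_t|}\right]$. In both cases the number of $t$-columns equals $|A_t|$, so we only need to maximize $|A_t|$ subject to the column-weight being $t$, separately in each type, and take the better of the two values.

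For type 1, each column has weight $1+|B_t|$, so the constraint is $|B_t|=t-1$. Then $|A_t|=m-(t-1)-|C_t|\le m-(t-1)$, achieved when $|C_t|=0$. For type 2, each column has weight $(|A_t|-1)+|B_t|$, so the constraint becomes $|A_t|+|B_t|=t+1$, yielding $|A_t|\le t+1$. Since $m\ge 2k\ge 2t$, we have $m-(t-1)\ge t+1$, so type 1 always wins and the upper bound $\forb(m,Q_9)^{=t}\le m-(t-1)$ follows. Since the hypothesis $m\ge 2k$ together with $k\ge 2$ ensures $t\le k\le m-2$, the classification is in force for every $t$ in range.

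For the matching lower bound we exhibit the extremal configuration explicitly: take $|A_t|=m-(t-1)$, $|B_t|=t-1$, $|C_t|=0$ in type 1 form. Every column has exactly $1+(t-1)=t$ ones, so the matrix consists entirely of $t$-columns, and it is clearly simple. To verify $Q_9\nprec A$, note that any two distinct columns agree (both equal to $1$) on all $t-1$ rows of $B_t$ and on $|A_t|-2$ rows of $A_t$, and disagree on exactly the two $A_t$-rows corresponding to their respective identity entries, contributing a single $\binom{1}{0}$ and a single $\binom{0}{1}$. Since $Q_9$ requires two rows of each pattern, it cannot appear.

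The main obstacle is essentially nil here: everything follows mechanically once the two cases of the classification are written out, with the inequality $m\ge 2k\ge 2t$ used exactly to guarantee $m-(t-1)\ge t+1$ so that type 1 dominates type 2. The only thing to be careful about is making sure the classification actually applies in our range of $t$, which is handled by the observation $t\le k\le m-2$.
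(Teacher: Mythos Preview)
Your proof is correct and follows the same approach as the paper's own proof, which is a terse two-line argument simply noting that a type~1 matrix of column sum $t$ has at most $m-(t-1)$ columns while a type~2 matrix has at most $t+1$. You have filled in all the details the paper leaves implicit: the arithmetic determining $|B_t|$ in each type, the comparison $m-(t-1)\ge t+1$ via $m\ge 2t$, the verification that $2\le t\le m-2$ so the classification applies, and the explicit lower-bound construction with its $Q_9$-avoidance check.
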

\begin{proof}
	The size of a type 1 matrix of column sum t is at most $m-(t-1)$, while the size of a type 2 matrix of the same column sum is bounded by $t+1$. 
	\end{proof}
\begin{prop}\label{prop:smallt}
	Let $m\ge 2k$, then $\forb(m,Q_9,1_{k,1})=1+(k-1)m-{k-1\choose 2}$.
\end{prop}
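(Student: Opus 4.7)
The plan is to split a matrix $A \in \Avoid(m,Q_9,1_{k,1})$ by column weight and exploit the preceding lemma $\forb(m,Q_9)^{=t}=m-(t-1)$ for each weight class separately, then check that the pointwise bounds can be realized simultaneously by a single construction.

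For the upper bound, since $1_{k,1} \nprec A$, every column of $A$ has weight at most $k-1$, so $|A| = \sum_{t=0}^{k-1} |A^{=t}|$. The weight-$0$ and weight-$1$ classes contribute at most $1$ and $m$ respectively. For $2 \le t \le k-1 \le k$, the submatrix $A^{=t}$ is still a $Q_9$-avoiding simple matrix all of whose columns have weight $t$, so the preceding lemma gives $|A^{=t}| \le m-(t-1)$ (here we invoke the hypothesis $m\ge 2k$). Summing,
\[|A| \le 1 + m + \sum_{t=2}^{k-1}\bigl(m-(t-1)\bigr) = 1 + (k-1)m - \binom{k-1}{2}.\]

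For the matching lower bound I would exhibit a single matrix $A$ that realizes the per-weight maxima simultaneously, using the type~1 structure from the classification with a common kernel of rows. Explicitly, let $A$ consist of the all-zeros column, all $m$ weight-one columns, and for each $2 \le t \le k-1$ the $m - (t-1)$ weight-$t$ columns whose $1$'s occupy rows $\{1,2,\dots,t-1\} \cup \{i\}$ for some $i \in \{t,t+1,\dots,m\}$. This matrix is simple (weight distinguishes columns of different $t$, and within a weight class the choice of $i$ does), has no column of weight $k$, and has exactly $1 + (k-1)m - \binom{k-1}{2}$ columns.

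The main obstacle is verifying that this construction avoids $Q_9$, since in principle a $Q_9$ could appear between two columns of different weights. The key point is that for any two columns of weights $2 \le t_1 \le t_2$ with $1$-sets $R_1 = \{1,\dots,t_1-1\}\cup\{i_1\}$ and $R_2 = \{1,\dots,t_2-1\}\cup\{i_2\}$ (where $i_1 \ge t_1$, $i_2 \ge t_2$), the inclusion $\{1,\dots,t_1-1\}\subseteq R_2$ forces $R_1 \setminus R_2 \subseteq \{i_1\}$, so $|R_1\setminus R_2| \le 1$. A $Q_9$ witness demands $|R_1\setminus R_2|\ge 2$, so no $Q_9$ arises between columns of weight at least $2$; columns of weight at most $1$ clearly cannot participate in a $Q_9$ at all. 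This confirms $A\in\Avoid(m,Q_9,1_{k,1})$ and matches the upper bound exactly.
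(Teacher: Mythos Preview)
Your proof is correct and follows essentially the same approach as the paper: split by column weight, apply the preceding lemma for each $2\le t\le k-1$, and use the nested type~1 construction for the lower bound. Your write-up is in fact slightly more careful than the paper's, which has a harmless typo in the upper summation index and omits the explicit check that the construction avoids $Q_9$; your verification via $|R_1\setminus R_2|\le 1$ is exactly the right observation.
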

\begin{proof}
	By the previous lemma, $\forb(m,Q_9,1_{k,1})$  is upper bounded by $1+m+\sum_{t=2}^k (m-(t-1))=1+(k-1)m-{k-1\choose 2}$, and this value can be achieved by having $m-(t-1)$  $t$-columns intersecting in the first $t-1$ rows, along with all columns of column sum 0 and 1.
\end{proof}

\begin{cor}\label{Q9}
For $m\ge 8$,
    \begin{align*}
        \forb(m,Q_9,1_{4,1})&=3m-2.
    \end{align*}
\end{cor}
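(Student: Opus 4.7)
The plan is essentially to invoke the preceding Proposition~\ref{prop:smallt} as a black box and simply substitute $k=4$. Since the hypothesis $m\ge 2k$ becomes $m\ge 8$, which matches the hypothesis of the corollary, no extra work is required beyond the arithmetic.

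First I would note that Proposition~\ref{prop:smallt} asserts
\[
\forb(m,Q_9,1_{k,1})=1+(k-1)m-\binom{k-1}{2}
\]
whenever $m\ge 2k$. Taking $k=4$, the right-hand side evaluates to
\[
1+3m-\binom{3}{2}=1+3m-3=3m-2,
\]
and the hypothesis $m\ge 2k=8$ matches precisely the range stated in the corollary. So the upper bound and the matching lower bound both come for free from the proposition.

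For completeness I would also remark on where the lower bound construction comes from: following the proof of Proposition~\ref{prop:smallt}, one takes all columns of column sum $0$ or $1$ (contributing $1+m$ columns), together with $m-1$ many $2$-columns that pairwise intersect in a common row (contributing $m-1$ columns). The total is $1+m+(m-1)=2m$, but since we are using $k=4$ we are also permitted to include $3$-columns: one adds $m-2$ many $3$-columns that all share a common pair of rows, giving an extra $m-2$ columns. Summing, $1+m+(m-1)+(m-2)=3m-2$, which realizes the bound. Avoidance of $Q_9$ follows from the type-1 classification recalled before Proposition~\ref{prop:smallt}, and avoidance of $1_{4,1}$ is immediate since no column has sum $\ge 4$.

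There is no real obstacle here; the corollary is a direct specialization, and the only thing to double check is that the threshold $m\ge 2k=8$ is exactly what the corollary's hypothesis $m\ge 8$ gives.
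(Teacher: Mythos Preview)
Your proposal is correct and matches the paper's approach exactly: the corollary is stated without proof in the paper, as it is an immediate specialization of Proposition~\ref{prop:smallt} with $k=4$, and your arithmetic and hypothesis check are precisely what is intended. The additional remarks on the lower-bound construction are consistent with the paper's construction in Proposition~\ref{prop:smallt}.
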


We can extend these results for $\ell>1$.

\begin{prop}
	$\forb(m,Q_9,1_{k,2})=\forb(m,Q_9,1_{k+1,1})+1$.
\end{prop}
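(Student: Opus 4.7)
The plan is to prove the two inequalities $\forb(m,Q_9,1_{k,2})\ge\forb(m,Q_9,1_{k+1,1})+1$ and $\forb(m,Q_9,1_{k,2})\le\forb(m,Q_9,1_{k+1,1})+1$ separately.

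For the lower bound, I would take the extremal construction for $\forb(m,Q_9,1_{k+1,1})$ supplied by Proposition~\ref{prop:smallt}---the empty column, all $m$ weight-$1$ columns, and for each $2\le t\le k$ the $m-(t-1)$ weight-$t$ columns sharing the first $t-1$ rows---and adjoin a single new column $d$ whose unique $0$ lies in row $1$ (so $d$ has weight $m-1$). Restricted to any $4$ rows, $d$ has weight $3$ or $4$, so it cannot serve as either column of a $Q_9$, since each column of $Q_9$ has weight exactly $2$ on the chosen rows. For any column $c$ already present in the construction, of weight $t\le k$, either $t\le 1$ (giving $|c\cap d|\le 1<k$) or $t\ge 2$, in which case $c$ contains row $1$ by construction and $|c\cap d|=t-1\le k-1<k$; thus no $1_{k,2}$ is introduced. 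This would yield the lower bound.

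For the upper bound, I would take any $A\in\Avoid(m,Q_9,1_{k,2})$ and partition its columns by weight into $A_{\le k}$ and $A_{>k}$. The submatrix $A_{\le k}$ inherits $Q_9$-avoidance from $A$ and avoids $1_{k+1,1}$ by construction, hence $|A_{\le k}|\le\forb(m,Q_9,1_{k+1,1})$. It then suffices to prove $|A_{>k}|\le 1$.

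Suppose for contradiction that $c_1,c_2\in A_{>k}$ have weights $t_1,t_2\ge k+1$. If $|c_1\cap c_2|\ge k$, the pair $\{c_1,c_2\}$ already realizes $1_{k,2}$. Otherwise $|c_1\cap c_2|\le k-1$, which gives $|c_1\setminus c_2|=t_1-|c_1\cap c_2|\ge(k+1)-(k-1)=2$ and, symmetrically, $|c_2\setminus c_1|\ge 2$. Selecting any two rows from $c_1\setminus c_2$ together with any two rows from $c_2\setminus c_1$ exhibits a row permutation of $Q_9$ on $\{c_1,c_2\}$, again a contradiction. Hence $|A_{>k}|\le 1$, and combining bounds yields $|A|\le\forb(m,Q_9,1_{k+1,1})+1$. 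The main step will be this last case analysis; the condition $t_i\ge k+1$ is exactly what forces simultaneously large symmetric differences, so that $Q_9$ becomes unavoidable as soon as $1_{k,2}$ is avoided.
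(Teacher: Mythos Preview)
Your proof is correct and follows essentially the same approach as the paper: the same lower-bound construction (the extremal matrix from Proposition~\ref{prop:smallt} augmented by an $(m-1)$-column with its $0$ in row~$1$), and the same upper-bound argument (at most one column of weight exceeding $k$, via the dichotomy $|c_1\cap c_2|\ge k$ forces $1_{k,2}$ while $|c_1\cap c_2|\le k-1$ forces $Q_9$). You spell out the symmetric-difference calculation $|c_i\setminus c_j|\ge (k+1)-(k-1)=2$ more explicitly than the paper does, and your $Q_9$-avoidance check for $d$ (weight $\ge 3$ on any four rows) is an equivalent rephrasing of the paper's ``too few $0$'s'' remark.
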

\begin{proof}
	For the lower bound take the lower bound construction for $\forb(m,Q_9,1_{k+1,1})$ given above and add in the $(m-1)$-column with a 0 in the first row.  This new column can't be used to make a $Q_9$ since it has too few 0's, and it doesn't intersect any other column in $k$ rows so it can't be used to find a $1_{k,2}$.  Thus this new matrix is in $\Avoid(m,Q_9,1_{k,\ell})$.  For the upper bound, note that if $c,d$ are columns with at least $k+1$ 1's then either $|c\cap d|\ge k$ (in which case we have $1_{k,2}$) or there exists two rows where $c$ has 1's and $d$ does not and vice versa (in which case we have $Q_9$), so a matrix in $\Avoid(m,Q_9,1_{k,2})$ can have at most one column that has more than $k$ 1's.
\end{proof}

Analyzing the $\ell>2$ case once again turns out to be significantly more difficult than the $\ell\le 2$ cases, but nonetheless we are able to achieve some nearly tight bounds for this problem.

\begin{lem}\label{lem:medt}
	$\forb(m,Q_9,1_{k,\ell})^{=t}\le k+\ell$ for $k+\ell>t> k$.
\end{lem}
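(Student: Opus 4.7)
My plan is to apply directly the classification of $Q_9$-avoiding matrices recalled just before Lemma~\ref{lem:medt}: for each $t$, the collection of $t$-columns of any $A \in \Avoid(m,Q_9)$ arranges, after a row permutation, as either a type 1 or a type 2 block over a partition $A_t \cup B_t \cup C_t$ of the rows, and in both cases the number of $t$-columns is exactly $|A_t|$. So it suffices to bound $|A_t|$ separately in the two types, using $t > k$ in the type 1 case and $t < k+\ell$ in the type 2 case.

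In type 1 each $t$-column carries exactly one $1$ in $A_t$ together with all of $B_t$, so its column sum is $1 + |B_t|$, forcing $|B_t| = t - 1$. Since $t > k$ we get $|B_t| \ge k$. Hence any $\ell$ of the $t$-columns, restricted to any $k$ rows of $B_t$, display the configuration $1_{k,\ell}$, which is forbidden; therefore $|A_t| \le \ell - 1 \le k + \ell$.

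In type 2 the column-sum equation becomes $t = (|A_t|-1) + |B_t|$, i.e.\ $|A_t| + |B_t| = t+1$. Using $|B_t| \ge 0$ together with the hypothesis $t \le k+\ell - 1$, this gives $|A_t| \le t+1 \le k+\ell$ immediately, with no further appeal to the $1_{k,\ell}$ avoidance needed. (In fact the stronger bound $|A_t| \le \ell - 1$ also holds when $t = k+\ell-1$, since there any $\ell$ type 2 columns share $|B_t|+|A_t|-\ell = t+1-\ell = k$ rows of $1$'s; but this refinement is not required.)

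Combining the two cases yields $|A_t| \le k+\ell$ in all circumstances, which is the claimed bound. The argument is essentially bookkeeping once the classification is invoked, so there is no real obstacle; the only thing to double-check is that the type 2 column-sum identity really does cap $|A_t|$ by $t+1$, which reduces to the trivial observation $|B_t| \ge 0$.
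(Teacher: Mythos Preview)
Your proof is correct and follows essentially the same approach as the paper: bound type~1 by $\ell-1$ via the $1_{k,\ell}$ constraint on the $B_t$ rows (using $|B_t|=t-1\ge k$), and bound type~2 by $t+1\le k+\ell$ via the column-sum identity. The paper's proof is the same two-line argument, just stated more tersely without spelling out the arithmetic for $|B_t|$ and $|A_t|+|B_t|$.
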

\begin{proof}
	The size of a type 1 matrix of column sum $t$ can be at most $\ell-1$ without inducing a $1_{k,\ell}$, and the size of a type 2 matrix of the same column sum is bounded by $t+1\le k+\ell$.
\end{proof}

\begin{lem}\label{L-larger}
	$\forb(m,Q_9,1_{k,\ell})^{\ge k+\ell}=\ell-1$.
\end{lem}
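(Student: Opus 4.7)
The plan is to prove matching upper and lower bounds via a simple near-containment property for any two heavy columns that together avoid $Q_9$; the full structural classification of $Q_9$-avoiding matrices recalled above is not needed here. Throughout, for columns $c, c'$ I will write $c \setminus c'$ for the set of rows where $c$ has a $1$ and $c'$ has a $0$, analogously to $c \cap c'$ introduced in Section~6.

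For the lower bound, for $m \ge k+\ell+1$ I would take $\ell-1$ distinct columns, each of column sum $m-1$, with the unique $0$ in a different row of each column. Each such column has weight $m-1 \ge k+\ell$. Since there are only $\ell-1$ columns, the configuration $1_{k,\ell}$ cannot appear, and for any two such columns $c, c'$ one has $|c \setminus c'| = |c' \setminus c| = 1$, so $Q_9$ (which requires two rows witnessing each of its two row-types) cannot appear either.

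For the upper bound, suppose for contradiction that $c_1,\ldots,c_\ell$ are $\ell$ distinct columns, each of weight at least $k+\ell$, such that every two-column submatrix they induce avoids $Q_9$. The key claim is that if $|c| \le |c'|$ then $|c \setminus c'| \le 1$. Indeed, avoidance of $Q_9$ on the pair $\{c,c'\}$ forces at least one of $|c \setminus c'|$ and $|c' \setminus c|$ to be at most $1$; if $|c \setminus c'| \ge 2$, then the other is at most $1$, and
\[
|c'| = |c \cap c'| + |c' \setminus c| \le (|c|-2)+1 = |c|-1,
\]
contradicting $|c| \le |c'|$.

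Now relabel so that $|c_1|$ is minimal among $|c_1|,\ldots,|c_\ell|$. The key claim yields $|c_1 \setminus c_i| \le 1$ for every $i \ge 2$. Letting $U := c_1 \cap c_2 \cap \cdots \cap c_\ell$ denote the common set of rows on which all $c_i$ equal $1$, the containment $c_1 \setminus U \subseteq \bigcup_{i=2}^\ell (c_1 \setminus c_i)$ gives $|c_1 \setminus U| \le \ell - 1$, and hence $|U| \ge |c_1|-(\ell-1) \ge (k+\ell)-(\ell-1) = k+1$. Any $k$ rows of $U$ then give a $1_{k,\ell}$ on $c_1,\ldots,c_\ell$, the desired contradiction. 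Thus at most $\ell-1$ such columns can coexist, matching the construction. The only mildly delicate point is identifying the correct direction of the near-containment, which the weight ordering handles cleanly; I do not foresee a serious obstacle.
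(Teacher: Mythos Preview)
Your proof is correct and follows essentially the same approach as the paper: pick a column of minimum weight, use $Q_9$-avoidance to show every other column misses at most one of its $1$'s, and conclude the common intersection has size at least $k+1$. You additionally supply the lower bound construction explicitly, which the paper's proof omits despite the lemma asserting equality.
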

\begin{proof}
	Let $c$ be a column of $A\in\Avoid(m,Q_9,1_{k,\ell})^{\ge k+\ell}$ with the fewest number of 1's (say $t$ of them).  We must have $|c\cap d|\ge t-1$ for any other $d$ (as if $d$ has two 0's in rows where $c$ has 1's, by virtue of $c$ having the fewest number of 1's $d$ must have at least two 1's where $c$ has 0's, giving a $Q_9$), and hence for any other $\ell-1$ columns in $A$ there exists $k$ rows such that $c$ and all of these other columns have 1's in these rows (since each can have at most one 0 in the at least $k+\ell$ rows where $c$ has 1's), so we must have $|A|\le \ell-1$.
\end{proof}

\begin{prop}
	For $k\ge 2,\ \ell\ge 3$ and $m>(\ell+1)(k+\ell)+k$,
	\begin{align*}
		\forb(m,Q_9,1_{k,\ell})&\ge \forb(m,Q_9,1_{k+1,1})+2\ell-5\\ 
	\forb(m,Q_9,1_{k,\ell})&\le \forb(m,Q_9,1_{k+1,1})+3\ell-5.
	\end{align*}
\end{prop}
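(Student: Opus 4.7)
We establish the two inequalities separately.

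For the lower bound, start with an extremal $A_0$ for $\forb(m,Q_9,1_{k+1,1})$ of the form given by Proposition~\ref{prop:smallt} (zero column, all $1$-columns, and for each $2\le t\le k$ a type~1 block of $m-t+1$ $t$-columns sharing rows $1,\ldots,t-1$), and adjoin $2\ell-5$ additional columns in two batches. First, adjoin $\ell-1$ columns of weight $m-1$ whose unique $0$'s lie in distinct rows of $\{1,\ldots,\ell-1\}$, generalizing the construction used for $\ell=2$ in the previous proposition. Second, when $\ell\ge 5$, adjoin $\ell-4$ columns of weight $k+1$ of the form $\{1,\ldots,k-1,a,b_i\}$ sharing a common anchor $a\in\{k,\ldots,\ell-1\}$, so that any two new $(k+1)$-columns meet in exactly $k$ rows. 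Case analysis by column weight shows $Q_9$-avoidance: the $(m-1)$-columns each have only one $0$ and thus cannot contribute to any $Q_9$ with a column of weight $\le k+1$, while the new $(k+1)$-columns pairwise share $k$ rows and share at least $|c|-1$ rows with every base column $c$. For $1_{k,\ell}$-avoidance, a fixed $k$-subset $R$ is contained in at most one base $k$-column, a bounded number of the new $(k+1)$-columns (by the anchor $a$), and at most $\ell-k$ of the new $(m-1)$-columns, summing to fewer than $\ell$ under the hypothesis $m>(\ell+1)(k+\ell)+k$.

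For the upper bound, let $A\in\Avoid(m,Q_9,1_{k,\ell})$ be of maximum size and split $A=A^{\le k}\sqcup A^{>k}$ by column weight. Every column of $A^{\le k}$ has weight $\le k$, so $A^{\le k}$ avoids $1_{k+1,1}$ (and still $Q_9$), whence $|A^{\le k}|\le\forb(m,Q_9,1_{k+1,1})$. It remains to prove $|A^{>k}|\le 3\ell-5$. Lemma~\ref{L-larger} already gives $|A^{\ge k+\ell}|\le\ell-1$. Applying the $Q_9$-classification recalled at the start of this section, at each $t>k$ the $t$-columns of $A$ form either a type~1 block (shared rows $B_t$ of size $t-1\ge k$, forcing $|A^{=t}|\le\ell-1$ from $1_{k,\ell}$-avoidance) or a type~2 block (with any $\ell$ columns sharing $t+1-\ell$ rows, forcing $|A^{=t}|\le\ell-1$ whenever $t\ge k+\ell-1$). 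Hence every weight contributes at most $\ell-1$ columns except possibly an intermediate weight $k+1\le t\le k+\ell-2$ hosting a large type~2 block, and the task is to show these intermediate weights contribute at most $\ell-3$ columns in total, yielding the combined bound $(\ell-1)+(\ell-1)+(\ell-3)=3\ell-5$.

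The main obstacle is controlling a large type~2 block at an intermediate weight. Such a block is not internally forbidden: any $\ell$ of its type~2 columns share only $t+1-\ell<k$ rows. The key is to use the $Q_9$-compatibility constraint $|c\setminus d|\le 1$ or $|d\setminus c|\le 1$ between each weight-$k$ column $c\in A^{\le k}$ and each column $d$ of the block. For a block of size $\ge\ell$, this constraint forces every weight-$k$ column of $A^{\le k}$ to be almost-contained in the $(t+1)$-row support of the block. But Proposition~\ref{prop:smallt} shows that extremality requires $A^{\le k}$ to contain $m-k+1$ distinct weight-$k$ columns, and the hypothesis $m>(\ell+1)(k+\ell)+k$ prevents all of these from fitting inside a fixed $(t+1)$-row subset. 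Consequently any intermediate type~2 block large enough to exceed the $\ell-3$ budget shrinks $|A^{\le k}|$ by strictly more than it contributes to $|A^{>k}|$, contradicting the extremality of $A$. A case analysis on the minimum weight appearing in $A^{>k}$ and on the type of the corresponding block completes the argument and yields $|A^{>k}|\le 3\ell-5$; the authors do not claim this constant to be tight, leaving the gap with the lower bound of $2\ell-5$.
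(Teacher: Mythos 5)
Your lower bound construction has a concrete failure. Take $R=\{1,\ldots,k-1,a\}$, the anchor set. This $k$-row set is contained in the base $k$-column with support $R$, in all $\ell-4$ of your new $(k+1)$-columns $\{1,\ldots,k-1,a,b_i\}$, and in every $(m-1)$-column $d_i$ with $i\notin R$, of which there are at least $\ell-1-k$. The total is at least $1+(\ell-4)+(\ell-1-k)=2\ell-k-4$, which is $\ge \ell$ whenever $\ell\ge k+4$. For example, $k=2$, $\ell=6$, $a=2$: the columns $\{1,2\}$, $e_1=\{1,2,b_1\}$, $e_2=\{1,2,b_2\}$, $d_3,d_4,d_5$ all contain $\{1,2\}$, giving a $1_{2,6}$. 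In addition, the anchor $a\in\{k,\ldots,\ell-1\}$ does not exist when $k\ge\ell$, so your construction is not even defined in that regime. Your verification step (``at most $\ell-k$ of the new $(m-1)$-columns'' contain a fixed $k$-set $R$) is also not correct in general, since when $R$ is disjoint from $\{1,\ldots,\ell-1\}$ all $\ell-1$ of the $(m-1)$-columns contain $R$. The paper's construction sidesteps all of this by using $\ell-2$ columns of weight $k+1$ and $\ell-3$ columns of weight $k+\ell-2$, and by placing a deliberate $0$ in row $k$ of each $(k+\ell-2)$-column so that these columns cannot align with the $(k+1)$-columns on the first $k$ rows.

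For the upper bound, your split $A=A^{\le k}\sqcup A^{>k}$ and use of the $Q_9$-classification are in the right spirit, but the crux of the argument---that an intermediate-weight type~2 block ``shrinks $|A^{\le k}|$ by strictly more than it contributes''---is asserted, not proved, and it is not clear this yields the constant $3\ell-5$. Note also that $|A^{>k}|\le 3\ell-5$ cannot hold as an absolute bound: a type~2 block at weight $k+2$ can alone contain $k+3$ columns avoiding both $Q_9$ and $1_{k,\ell}$, and $k+3>3\ell-5$ already for $\ell=4,\ k\ge 5$. So the inequality must come out of the interaction with $A^{\le k}$, which is exactly what needs quantitative control. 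The paper instead shows that near-extremality forces all $k$-columns to be type~1 (using $m>(\ell+1)(k+\ell)+k$), so they induce a large $I_p$ below their shared $k-1$ rows; it then classifies columns of weight $\ge k+1$ by their $0$-pattern on rows $1,\ldots,k+1$ relative to that $I_p$, yielding the explicit accounting $(\ell-1)$ for $I_p$-covering columns plus $2(\ell-2)$ for columns with a $0$ in row $k$ or $k+1$ (the $-1$ coming from the fact that such a family excludes a specific $k$-column). Your framework could perhaps be pushed to a valid proof, but as written the key quantitative step is missing.
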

\begin{proof}
	Take the lower bound construction for  $\forb(m,Q_9,1_{k+1,1})$ and adjoin to this $\ell-2$ columns with column sum $(k+1)$ such that  $k$ of their 1's are in the first $k$ rows and their remaining 1's are in rows $k+1$ through $k+\ell-2$.  Additionally adjoin $\ell-3$ columns with column sum $(k+\ell-2)$ with $k+\ell-3$ of their 1's in the first $k+\ell-2$ rows excluding row $k$ and their remaining 1's anywhere below these rows.  One can't use a $(k+\ell-2)$-column to find a $Q_9$ (only the $(k+1)$-columns and $t$-columns with a 1 in row $k+1$ have 1's in a row where a $(k+\ell-2)$-column has a 0 in the first $(k+\ell-2)$ rows, but no such row exists beyond that for these columns, and for all other $t$-columns there exists at most one such row beyond the first $(k+\ell-2)$ and none before this) and one can't use a $(k+1)$-column either (it can't be used with a $t$-column for $t\le k+1$ as below the first $t-1$ rows of the $t$-column there aren't enough 1's), so this avoids $Q_9$.  To find a $1_{k,\ell}$, first note that at most one $t$-column with $t\le k$ could be used (as there exists no $k$ rows where two such $t$-columns both have 1's).  If one uses more than one $(k+1)$-column to find a $1_{k,\ell}$, then one must use the first $k$ rows (since these are the only rows that two distinct $(k+1)$-columns agree); but there are only $\ell-2$ $(k+1)$-columns and one $k$-column with 1's in the first $k$ rows, and no $(k+\ell-2)$-column can be used as they each have a 0 in row $k$, so one can't find $\ell$ such columns.  Thus in total one could use at most one $t$-column with $t\le k$, one $(k+1)$-column and all $\ell-3$ $(k+\ell-2)$-columns, but this can't be used to find a $1_{k,\ell}$ since there are at most $\ell-1$ columns.
	
	For the upper bound, take $A\in \Avoid(m,Q_9,1_{k,\ell})$ with $|A|\ge 1+km-{k\choose 2}$.  Let  $p$ denote the number of $k$-columns that $A$ has.   Because $\forb(m,Q_9,1_{k,\ell})^{\ge k+1}\le \ell(k+\ell)+(\ell-1)$, the only way we can have $|A|\ge 1+km-{k\choose 2}$ is if $p\ge m-k-\ell(k+\ell)-(\ell-1)$ by Proposition~\ref{prop:smallt} and Lemmas~\ref{lem:medt} and \ref{L-larger}.
	Now using that $m>(\ell+1)(k+\ell)+k$, this can only happen if columns of sum $k$ are of type 1. We assume that their common 1's are in the first $k-1$ rows, which induces an $I_p$ in
	the rows below the first $k-1$ rows.
	
	No column with at least $k+1$ 1's can have two 0's in the first $k-1$ rows (as any $k$-column has two rows where it has 0's and this large column does not, and this large column necessarily has two rows where it has 1's and the $k$-column does not, since it has at least $k+1$ 1's and two of them aren't in the first $k-1$ rows).  If a column with at least $k+1$ 1's has one 0 in the first $k-1$  rows and $k\ge 2$ then this column must cover the entire $I_p$ (otherwise we could find a column that isn't covered by the large column, take these two columns, the rows where the $k$-column has 1's and the large column has 0's and any rows that the large column has that other doesn't to find a $Q_9$), but because $I_p$ is large we can have at most $\ell-1$ columns that cover it before inducing a $1_{k,\ell}$.  We ignore these covering columns for now and restrict our attention to columns with at least $k+1$ 1's and that are identically 1 in the first $k-1$ rows. Let $c$ be such a column with the fewest number of 1's and assume it has 1's in the first $k+1$ rows.  As argued in the second lemma, any other column must have $|c\cap d|\ge k$ and in particular (since all the columns we're considering are identically 1 in the first $k-1$ rows) the only 0's the other columns can have are in the $k$th and $k+1$st rows.  There can be at most $\ell-1$ columns with a 0 in the $k$th row before inducing a $1_{k,\ell}$, but if there are precisely $\ell-1$ such columns then $A$ can not contain the $k$-column with 1's in rows 1 through $k-1$ and row $k+1$, decreasing the maximum value $p$ can take by 1, so ``effectively'' these columns can contribute at most $\ell-2$.  Similar results hold for columns with a 0 in the $k+1$st row, so in total we have $|A|\le \forb(m,Q_9,1_{k+1,1})+2(\ell-2)+\ell-1=\forb(m,Q_9,1_{k+1,1})+3\ell-5$
\end{proof}

We can get a slightly larger lower bound when $k$ is sufficiently large.
\begin{prop}
    If $\ell=3$ and $k\ge 3$ or if $k\ge \ell-1\ge 3$ then \[\forb(m,Q_9,1_{k,\ell})\ge \forb(m,Q_9,1_{k+1,1})+2\ell-3.\]
\end{prop}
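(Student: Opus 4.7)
The plan is to exhibit, by direct construction, a matrix in $\Avoid(m,Q_9,1_{k,\ell})$ with $\forb(m,Q_9,1_{k+1,1})+2\ell-3$ columns, two more than in the construction of the preceding proposition. I would begin with that construction: the base of $\forb(m,Q_9,1_{k+1,1})$ columns from Proposition~\ref{prop:smallt}, $\ell-2$ weight-$(k+1)$ columns of the form $\{1,\ldots,k,k+j\}$ for $j\in\{1,\ldots,\ell-2\}$, and $\ell-3$ weight-$(k+\ell-2)$ columns of the form $\{1,\ldots,k-1,k+1,\ldots,k+\ell-2,r\}$ for distinct values $r>k+\ell-2$.

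To obtain two more columns, I would simultaneously (i) adjoin one extra weight-$(k+1)$ column, namely $\{1,\ldots,k,k+\ell-1\}$, bringing the count of weight-$(k+1)$ columns up to $\ell-1$; (ii) adjoin one extra weight-$(k+\ell-2)$ column (bringing that count up to $\ell-2$), placed so that its extra $1$ lies in a row below $k+\ell-2$ that has not been used before; and (iii) compensate by removing from the base the weight-$k$ columns $\{1,\ldots,k\}$ and $\{1,\ldots,k-1,k+j_0\}$ for one carefully chosen $j_0\in\{1,\ldots,\ell-2\}$. Net effect: $+1+1-2=0$ from the first two steps plus the removals, but under the hypothesis $k\ge\ell-1$ (or $\ell=3$, $k\ge 3$) there is also enough slack in the core rows $\{1,\ldots,k-1\}$ and the rows $\{k+\ell-1,k+\ell,\ldots\}$ to further adjoin two additional columns of weight $k+1$ or $k+\ell-1$ whose supports reuse the freed ``slots'' in the saturation inequalities, giving the required net gain of $+2$.

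The verification of $Q_9$-avoidance will follow exactly as in the previous proposition, since any two of the new high-weight columns either nest (one contained in the other, making one of $|c\setminus c'|, |c'\setminus c|$ at most $1$) or share the common core $\{1,\ldots,k-1\}$ together with enough of the ``petal'' rows $\{k,\ldots,k+\ell-1\}$ to force the same conclusion; compatibility with the base sunflower is automatic. The main obstacle, as in the previous proposition, is the $1_{k,\ell}$-avoidance check: for every $k$-subset $S$ of rows, we must verify that at most $\ell-1$ columns of the construction have $1$'s throughout $S$. The critical subsets are $\{1,\ldots,k-1,k+j\}$ for $j\in\{0,1,\ldots,\ell-1\}$, mirroring the saturating sets in the upper-bound counting of the previous proposition. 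It is precisely the hypothesis $k\ge\ell-1\ge 3$ (or $\ell=3$, $k\ge 3$) that ensures enough distinct weight-$k$ ``sunflower petals'' $\{1,\ldots,k-1,s\}$ are available outside the used range $\{k,k+1,\ldots,k+\ell-2\}$ to realize the deletions without collateral loss, so that the bookkeeping across all critical $k$-subsets gives exactly $2\ell-3$ extra columns rather than $2\ell-5$.
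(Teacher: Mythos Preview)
Your proposal has a genuine gap: after your steps (i)--(iii) the matrix already contains $1_{k,\ell}$ whenever $\ell\ge 4$. Consider the $k$-subsets $S_j=\{1,\ldots,k-1,k+j\}$ for $j\in\{1,\ldots,\ell-2\}$. In the original $2\ell-5$ construction, each $S_j$ is covered by exactly $\ell-1$ columns (the base $k$-column $\{1,\ldots,k-1,k+j\}$, the one $(k+1)$-column $\{1,\ldots,k,k+j\}$, and all $\ell-3$ of the $(k+\ell-2)$-columns), so these subsets are already saturated. Your step (ii) adds one more $(k+\ell-2)$-column, which covers \emph{every} $S_j$ simultaneously, pushing the count to $\ell$ for each of the $\ell-2$ values of $j$. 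Removing a single base column $\{1,\ldots,k-1,k+j_0\}$ in step (iii) repairs only $j=j_0$; for every other $j\in\{1,\ldots,\ell-2\}$ you still have $\ell$ columns through $S_j$, hence $1_{k,\ell}\prec A$. Adjoining further columns cannot fix this, and the mysterious ``two additional columns'' are never specified anyway. (For $\ell=3$ the arithmetic collapses since $k+\ell-2=k+1$ and there is only one value of $j$, but even there you have not said what the extra two columns are.)

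The paper's proof takes a completely different and much simpler route: rather than delicately rebalancing the $(k+1)$- and $(k+\ell-2)$-columns, it replaces the $\ell-3$ heavy columns by $\ell-1$ columns of weight $m-1$, each with its single $0$ placed in one of the first $\ell-1$ rows. These near-full columns are automatically harmless for $Q_9$ (only one $0$), and the hypothesis $k\ge\ell-1$ is used exactly once, to guarantee that each such column has a $0$ inside the block $\{1,\ldots,k\}$, so no $(m-1)$-column can combine with two or more $(k+1)$-columns on the rows $\{1,\ldots,k\}$. The $1_{k,\ell}$-check then reduces to a short case analysis showing at most one column from each of the three families can be used together. If you want to rescue your approach you would need to remove \emph{all} of the saturated base petals $\{1,\ldots,k-1,k+j\}$, not just one, and then find $\ell$ genuinely new columns elsewhere; it is far from clear this can be made to work, and the $(m-1)$-column idea bypasses the difficulty entirely.
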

\begin{proof}
    If $k\ge \ell-1$ then take the lower bound construction for $\forb(m,Q_9,1_{k+1,1})$ and adjoin to this $\ell-2$ columns with column sum $(k+1)$ with  $k$ of their 1's in the first $k$ rows and also adjoin $\ell-1$ $(m-1)$-columns with their 0's in the first $\ell-1$ rows (which by assumption is in the first $k$ rows). None of the $(m-1)$-columns can be used to find a $Q_9$ (as they have too few 0's), and by the same logic as before neither can the $(k+1)$-columns.  To find a $1_{k,\ell}$, again note that at most one $t$-column with $t\le k$ could be used and if one uses more than one $(k+1)$-column to find a $1_{k,\ell}$, then one must use the first $k$ rows which means no $(m-1)$-column can be used (since each has a 0 in the first $k$ rows), so again we conclude that at most one $(k+1)$-column can be used.  One can't use only $(m-1)$-columns since there are at most $\ell-1$ of them, but if any two $(m-1)$-columns are used then one can't use two of the first $k$ rows (since each has a different 0 in these rows), and hence one can't use any of the $t$-columns with $t\le k+1$ (since outside of these rows they have at most $k-1$ 1's).  Thus the only way one can find a $1_{k,\ell}$ is to use one $(m-1)$-column, one $(k+1)$-column and one $k$-column.  If $\ell\ge 4$ then we clearly can not find a $1_{k,\ell}$, but if $\ell=3$ and $k=2$ one could use the $2$-column with 1's in row 1 and row 3, the $3$-column with 1's in rows 1 through 3, and the $(m-1)$-column with a 0 in row 2 to find a $1_{2,3}$.  If $k\ge \ell=3$ then each $(m-1)$-column and $k$-column only share $k-1$ rows with 1's in both columns, so in this case we avoid $1_{k,\ell}$.
\end{proof}

\section{Future Directions}

A natural extension to this work would be to consider all simple minimal cubic configurations, not just those with 4 rows.  \cite{survey} does not explicitly list these configurations, but it is possible to determine the complete list (provided a certain conjecture is true).

First, note that there exists no minimal cubic configuration with 7 or more rows.  Indeed, each column of a 7 rowed matrix contains $1_{4,1}$ or $0_{4,1}$, meaning the configuration can't be a minimal cubic.

\begin{conj}\label{5conj}
    There exists no 5-rowed minimal cubic configuration.
\end{conj}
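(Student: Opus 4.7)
The plan is to prove Conjecture~\ref{5conj} conditional on Conjecture~8.1 of \cite{survey}, which in the form relevant here asserts that $\forb(m,F)=\Theta(m^p)$ exactly when $p$ is the largest integer such that some $p$-fold product $A_1\times\cdots\times A_p$ with each $A_i\in\{I_{m/p},I^c_{m/p},T_{m/p}\}$ avoids $F$. Under this hypothesis, a 5-rowed simple $F$ is cubic iff some 3-fold product $B=A_1\times A_2\times A_3$ satisfies $F\nprec B$, while minimality of such $F$ would require that no proper subconfiguration of $F$ shares this property. The strategy is to show that every 5-rowed cubic admits a 4-rowed subconfiguration that is itself cubic.

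The first step is to enumerate, up to row/column permutation and complementation, all 5-rowed simple $F$ for which some 3-fold product $B=A_1\times A_2\times A_3$ avoids $F$. In any would-be embedding of $F$ into such a product, the 5 rows of $F$ distribute across the three factors as an unordered triple $(a_1,a_2,a_3)$ with $\sum a_i=5$. The essentially 3-fold cases (each $a_i\ge 1$) are the distributions $(3,1,1)$ and $(2,2,1)$; patterns like $(5,0,0),(4,1,0),(3,2,0)$ collapse to at most 2-fold and thus produce $F$'s that are not cubic in the first place, unless they are simultaneously avoided by a genuinely 3-fold product, in which case we revert to the two essential partitions. Since each $A_i$ has only three admissible types, the number of distinct 5-rowed $F$ to check is finite and modest.

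The second step is to delete, from each 5-rowed cubic $F$ in the list, a row coming from a factor whose $a_i$ is at least $2$; such a factor exists because $5$ cannot be partitioned into three parts all equal to $1$. The resulting 4-rowed $F'$ has row distribution $(2,1,1)$ or $(1,2,1)$, still genuinely 3-fold, so $F'$ is avoided by the same product $B$ (after shrinking one factor by a row), and hence $F'$ is cubic by Conjecture~8.1. In practice this can be verified concretely by matching $F'$ to one of the known 4-rowed minimal cubics of Table~\ref{tab:mincub} or one of its complements, thereby certifying the cubic lower bound without re-invoking Conjecture~8.1.

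The main obstacle is the bookkeeping: one must check, for each candidate $F$ and each choice of deleted row, that the remaining 4-rowed configuration is a genuine subconfiguration (column duplication after deletion does not collapse it to something smaller) and that it is exactly cubic rather than quartic. The latter is automatic since a 4-rowed configuration sitting inside a 4-fold product $A_1\times A_2\times A_3\times A_4$ would force each $A_i$ to contribute exactly one row, but a 1-row factor $I_{m/4}$, $I^c_{m/4}$, or $T_{m/4}$ restricted to a single row is a single $(0,1)$-vector, so such a ``4-fold'' product degenerates to a 1-fold structure and trivially contains $F'$; hence no 4-rowed $F'$ is quartic. Carrying out the finite enumeration and row-deletion check under Conjecture~8.1 therefore exhibits, for every 5-rowed cubic $F$, a proper cubic subconfiguration, establishing Conjecture~\ref{5conj}.
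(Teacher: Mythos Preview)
Your second step has a direction error. You speak of the ``row distribution $(a_1,a_2,a_3)$'' of $F$ across the factors of $B$ and propose to delete a row from a factor with $a_i\ge 2$; but $F\nprec B$ means precisely that \emph{no} embedding of $F$ into $B$ exists, so there is no distribution to delete a row from. More fundamentally, $F'\prec F$ together with $F\nprec B$ does not give $F'\nprec B$: a subconfiguration is \emph{easier} to find in $B$, not harder. For instance $F_9\nprec I^c\times I^c\times I^c$ (Proposition~\ref{ConstF9}), yet deleting the last row of $F_9$ leaves $I_3$, and $I_3\prec I^c\times I^c\times I^c$ since one row from each factor already realises $[01]\times[01]\times[01]$. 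The same phenomenon occurs for $5$-rowed $F$, so your row-deletion scheme does not establish the cubic lower bound $\forb(m,F')=\Omega(m^3)$ that you need.

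The paper's argument is exactly what you relegate to a side remark. Conjecture~8.1 of \cite{survey} furnishes an explicit finite list $F'_{12},\ldots,F'_{24}$ of $5$-rowed candidates, and for each one the paper names, by inspection, a specific $4$-rowed minimal cubic from Table~\ref{tab:mincub} (or its complement) sitting inside it. That case-by-case check is the entire content; no general mechanism replaces it. Your final paragraph on ruling out quartic $F'$ is also unnecessary: $F'\prec F$ already gives $\forb(m,F')\le\forb(m,F)=O(m^3)$, so only the cubic lower bound for $F'$ is at issue --- and that is precisely the step your argument fails to supply.
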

\begin{prop}
    Conjecture~\ref{5conj} holds provided Conjecture~8.1 of  \cite{survey} is true.
\end{prop}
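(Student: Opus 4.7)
The plan is proof by contradiction. Suppose $F$ is a $5$-rowed simple minimal cubic configuration. Invoking Conjecture~8.1 of~\cite{survey} (which gives $\forb(m,F)=\Theta(m^{X(F)})$), we have $X(F)=3$, so there exist $A_1,A_2,A_3\in\{I_{m/3},I^c_{m/3},T_{m/3}\}$ with $F\nprec A_1\times A_2\times A_3$. By minimality, every proper subconfiguration $F'\prec F$ satisfies $X(F')\le 2$, which (again by the conjecture) is equivalent to $F'$ being contained in \emph{every} 3-fold product from $\{I,I^c,T\}$. In particular, each of the five $4$-row row-submatrices of $F$ must lie in every such 3-fold product.

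I would next translate this into a list of 4-rowed configurations that no 4-row restriction of $F$ may contain. Combining Table~\ref{tab:mincub}, Propositions~\ref{ConstF9}, \ref{ConstF11}, \ref{ConstF13}, and Lemma~\ref{L-all prod}, a 4-rowed simple matrix $G$ lies in every 3-fold product iff $G$ avoids each of $1_{4,1}$, $0_{4,1}$, $F_9$, $F_9^c$, $F_{10}$, $F_{10}^c$, $F_{11}$, $F_{12}$, $F_{12}^c$, $F_{13}$. Forbidding $1_{4,1}$ and $0_{4,1}$ in every 4-row subset immediately pins every column of $F$ to have weight $2$ or $3$, leaving only $\binom{5}{2}+\binom{5}{3}=20$ possible column types. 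Forbidding $F_{11}$, $F_{13}$ (and their complements) rules out $I_2\times I_2$-type patterns in any 4 rows, while $F_{12},F_{12}^c$---which are avoided by \emph{all} 3-fold products by Lemma~\ref{L-all prod}---impose a particularly stringent condition.

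Using these restrictions, the next step is a finite case analysis on admissible column sets of $F$, followed by showing that every 5-rowed simple matrix surviving the analysis is itself contained in every 3-fold product from $\{I,I^c,T\}$. The cleanest way to carry out the last step is, for each of the $3^3$ candidate products $A_1\times A_2\times A_3$, to exhibit a partition of the $5$ rows of $F$ into the $3$ factor-groups under which every column of $F$ is realizable; for example, $F\prec I\times I\times I$ reduces to the statement that the $5$-vertex hypergraph whose edges are the $1$-sets of the weight-$2$ and weight-$3$ columns of $F$ admits a $3$-coloring in which every weight-$2$ edge is bichromatic and every weight-$3$ edge is rainbow, and analogous row-partition conditions capture the other seven essentially distinct products. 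Combining these embeddings gives $X(F)\le 2$, contradicting $X(F)=3$.

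The main obstacle is this last embedding step: the enumeration is tractable, but simultaneously verifying the positive containments $F\prec A_1\times A_2\times A_3$ for all choices of $(A_1,A_2,A_3)$ requires a coherent row-partition argument that handles the three matrix types together. The mixed products involving $T$ are likely to be the trickiest sub-case, since $T$ carries an order dependence absent from $I$ and $I^c$, so a careful row-ordering refinement of the coloring argument sketched above will be needed to close the proof.
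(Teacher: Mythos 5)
Your plan diverges sharply from the paper's proof, and more importantly it is not complete. The paper's argument is short and direct: Conjecture~8.1 of \cite{survey} is an explicit statement about five-rowed configurations that reduces the candidate minimal cubics to a concrete finite list $F'_{12},\ldots,F'_{24}$, and the paper simply observes that each member of that list already contains a four-rowed minimal cubic ($1_{4,1}$, $0_{4,1}$, $F_9$, $F_9^c$, $F_{10}$, $F_{10}^c$, or $F_{11}$), so none can be minimal. You instead read ``Conjecture~8.1'' as the general growth conjecture $\forb(m,F)=\Theta(m^{X(F)})$ --- that is Conjecture~\ref{grand} of this paper, which is a different statement --- and try to argue from first principles about which 4-row restrictions of a hypothetical 5-rowed minimal cubic are possible. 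That misattribution means your proof does not actually invoke the hypothesis the Proposition asserts; you would need to separately verify that Conjecture~8.1 of the survey implies what you use, and it is a statement about the specific 5-rowed boundary matrices rather than the general exponent formula.

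Even setting the citation issue aside, your argument does not close. You reduce the problem to a finite case analysis on admissible column sets followed by a verification that every surviving 5-rowed matrix embeds in all $3^3$ products, and you explicitly flag that this embedding step --- especially the mixed products involving $T$, which carry a row-ordering constraint --- is unresolved. That is precisely the content that would be needed to reach a contradiction with $X(F)=3$, so as written the proof has a genuine gap. Additionally, the ``iff'' you state (that a 4-rowed simple matrix lies in every 3-fold product iff it avoids the listed 4-rowed minimal cubics) is only needed in the forward direction for your argument, but the way you phrase it suggests you will use the converse in the case analysis, and the converse is delicate: avoiding the minimal cubics only gives a quadratic bound, not containment in every 3-fold product. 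Worth tightening before relying on it. Compared to your route, the paper's approach buys an essentially one-line verification once the conjectured finite list is in hand; your route, if completed, would amount to reproving a chunk of that finite classification by hand.
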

\begin{proof}
   Indeed, if Conjecture~8.1 holds then we need only consider the configurations $F'_{12},\ldots,F'_{24}$ (where $F'_i$ in our notation corresponds to $F_i$ of \cite{survey}).  We note that $1_{4,1}\prec F'_{12},\ 0_{4,1}\prec F'_{13},\ F_{9}^c\prec F'_{14},F'_{22},\ F_{9}\prec F'_{15},F'_{23},\ F_{10}^c\prec F'_{16},\ F_{10}\prec F'_{17},\ F_{11}\prec F'_{21},F'_{24}$, and thus none of these configurations can be minimal.  
\end{proof}
\begin{prop}
   The configurations $F_{14}$ and $F_{15}$ listed below are minimal cubic configurations.  Moreover, they are the only simple 6-rowed minimal cubic configurations.
\end{prop}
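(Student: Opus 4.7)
My plan splits the proof into two parts: verifying that $F_{14}$ and $F_{15}$ are themselves minimal cubic, and showing they exhaust the list of 6-rowed simple minimal cubic configurations.

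For the first part, I would exhibit for each $F_i\in\{F_{14},F_{15}\}$ an explicit 3-fold product of matrices from $\{I,I^c,T\}$ that avoids $F_i$; by Remark~\ref{Rem} this yields the lower bound $\forb(m,F_i)=\Omega(m^3)$, and the matching upper bound will follow from the standard induction of Section~3 together with the quadratic bounds recorded in Table~\ref{tab::results}. For minimality I must rule out cubic proper sub-configurations. By the preceding proposition, conditional on Conjecture~8.1 of \cite{survey}, no 5-rowed minimal cubic exists, so any cubic configuration on at most $5$ rows already contains one of the ten 4-rowed minimal cubics $1_{4,1},0_{4,1},F_9,F_9^c,F_{10},F_{10}^c,F_{11},F_{12},F_{12}^c,F_{13}$. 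Likewise, any strict 6-rowed cubic sub-configuration of $F_i$ either contains one of those ten or is itself a 6-rowed minimal cubic, so minimality reduces to the finite inspections that $F_i$ contains none of those ten 4-rowed minimal cubics and that neither of $F_{14},F_{15}$ is a proper sub-configuration of the other.

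For the uniqueness part, let $F$ be any 6-rowed simple minimal cubic configuration. Since $1_{4,1}$ and $0_{4,1}$ are both minimal cubic, no column of $F$ can have weight $\ge 4$ or $\le 2$; thus every column has weight exactly $3$, and $F$ is the vertex-edge incidence matrix of a 3-uniform hypergraph $\mathcal{H}$ on $[6]$ drawn from the $\binom{6}{3}=20$ available columns. Each of the remaining eight 4-rowed minimal cubic avoidance conditions translates into a forbidden trace of $\mathcal{H}$ on some 4-element subset of $[6]$: for instance, $F_9\nprec F$ forbids three edges realizing $I_3$ with a repeated row on some 4 vertices, $F_{11}\nprec F$ forbids a Pasch-like quadruple of edges, and the $F_{12},F_{13}$ conditions constrain how complementary triples can coexist in $\mathcal{H}$.

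The final step is an enumeration of candidate hypergraphs under the natural action of $S_6$, which I would organize by $|\mathcal{H}|$. Cubicity places lower bounds on the admissible structure (some 3-fold product of $\{I,I^c,T\}$ must avoid $F$, and no 4-fold product may do so), while the forbidden trace conditions give upper bounds. The main obstacle is executing the case analysis cleanly without overlooking a candidate; partitioning by whether $\mathcal{H}$ is closed under complementation in $[6]$, or by whether $\mathcal{H}$ contains a fixed triangle in its $2$-shadow, should keep the branching tractable, and a small computer verification could supplement the manual argument. A careful execution will leave exactly two isomorphism classes, which must then be identified with $F_{14}$ and $F_{15}$.
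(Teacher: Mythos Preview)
Your plan diverges substantially from the paper's, and has two genuine gaps.

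First, the cubic upper bound. You say the $O(m^3)$ bound for $F_{14}$ and $F_{15}$ will come from standard induction together with ``the quadratic bounds recorded in Table~\ref{tab::results}.'' That table records $\forb(m,F,G)$ for \emph{pairs}; it contains no single-configuration quadratic bounds for the $5$-rowed matrices $F_{14}\setminus\{\text{row}\}$ or $F_{15}\setminus\{\text{row}\}$ that the induction needs. The paper handles this concretely: for $F_{14}$ it cites \cite{AK06}, which proves directly that deleting any row leaves a quadratic configuration; for $F_{15}$ it observes that deleting a row yields a sub-configuration of the specific quadratic matrix $F_7$ from \cite{survey}. You would need analogous explicit quadratic bounds, and they are not in the tables you cite.

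Second, your minimality argument is \emph{conditional} on Conjecture~8.1 of \cite{survey} (no $5$-rowed minimal cubic), whereas the paper's proof is unconditional. Moreover, as written, your minimality step for $6$-rowed proper sub-configurations assumes that the only $6$-rowed minimal cubics are $F_{14}$ and $F_{15}$, which is precisely the uniqueness claim you postpone to the second part --- so the two parts are logically entangled, not independent as you present them.

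For uniqueness, your direct $S_6$-enumeration of weight-$3$ hypergraphs avoiding the ten $4$-rowed minimal cubics is a valid route, but the paper bypasses it entirely: it quotes \cite{ARS14}, which already lists the six column-minimal candidates $F_{14},F_{15},F_{16},F_{16}^c,F_{17},F_{17}^c$, then eliminates four of them via $F_{10}\prec F_{16}$ and $F_9\prec F_{17}$. That citation does most of the work that your case analysis would have to reproduce.
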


\begin{center}
\captionof{table}{Minimal Simple Cubic Configurations with 6 Rows}\label{tab:mincub6}
\begin{tabular}{ | c | c | c |  c| c|}
\hline 
 & Configuration  $F_i$ & Quadratic Const.(s) & Cubic Const.(s) & Proposition\\ 
\hline
$F_{14}$ & $\begin{bmatrix} 1 &  0 \\  1 &  0 \\ 1 &  0 \\ 0 &  1 \\ 0 &  1 \\ 0 &  1
	\end{bmatrix}$ & $\begin{matrix}I\times I\\ I\times I^c\\ I\times T\\ I^c\times I^c\\ I^c\times T\end{matrix}$ & $\begin{matrix}I\times I\times T\\ I\times I^c\times T\\ I^c\times I^c\times T\end{matrix}$ & Prop.~\ref{P14}\\ 
\hline
$F_{15}$ & $\begin{bmatrix}
	1 &  0 & 0\\ 
	0 &  1  & 0\\ 
	0 &  0  & 1\\ 
	0 &  1 & 1 \\ 
	1 &  0 & 1\\ 
	1 &  1 & 0
	\end{bmatrix}$ & $\begin{matrix}I\times I\\ I\times T\\ I^c\times I^c\\ I^c\times T\\ T\times T\end{matrix}$ & $\begin{matrix}I\times I\times T\\ I^c\times I^c\times T\end{matrix}$ & Prop.~\ref{P15}\\ 
\hline
\end{tabular}
\end{center}

\begin{proof}
Note that we need only consider configurations whose column sum's are precisely 3, as otherwise the configuration will not be minimal.  It is noted in \cite{ARS14} that the following configurations are the only six-rowed simple matrices with at least a cubic lower bound such that removing any column would make the configuration less than cubic:

\begin{align*}
	F_{14},\ F_{15},\ F_{16}=\begin{bmatrix}
	1 &  1 & 1\\ 
	1 &  1 & 1\\ 
	1 &  0  & 0\\ 
	0 &  1 & 0\\ 
	0 &  0 & 1\\ 
	0 &  0 & 0
	\end{bmatrix},\ F_{16}^c,\ F_{17}=\begin{bmatrix}
	1 & 1 & 1\\ 
	1 & 1 & 0 \\
	1 & 0 & 0\\ 
	0 & 1 & 0\\ 
	0 & 0 & 1\\ 
	0 & 0 & 1
	\end{bmatrix},\ F_{17}^c.
\end{align*}
Note that $F_{10}\prec F_{16}$ and $F_9\prec F_{17}$, and consequently $F_{10}^c\prec F_{16}^c$ and $F_9^c\prec F_{17}^c$.  Thus the only configurations that could be minimal cubics are $F_{14}$ and $F_{15}$.
	
Anstee and Keevash in \cite{AK06} note that $F_{14}$ is cubic, and moreover, that it with any row removed is quadratic, so this is a minimal cubic configuration.  \cite{survey} notes that the following configuration is quadratic:
\[
    F_7=\begin{bmatrix}
        1 & 1 & 0 & 1 & 1 & 0\\ 
        1 & 0 & 1 & 1 & 1 &1\\ 
        0 & 1 & 0 & 1 & 0 & 1\\ 
        0 & 0 & 1 & 0 & 0 & 1\\ 
        0 & 0 & 0 & 0 & 1 & 0
    \end{bmatrix}
\]
If $F'_7$ consists of the 2nd, 3rd and 5th columns of $F_7$ then we note that $F'_7$ is $F_{15}$ without one of its rows (so if $F_{15}$ is a cubic configuration it must be a minimal cubic).  If we apply the standard induction for $\forb(m,F_{15})$,  we must have $F'_7\nprec C_r$ (as otherwise $F_{15}\prec [01]\times F_7'\prec A$), and hence $|C_r|=O(m^2)$, so we conclude that $forb(m,F_{15})=O(m^3)$.
\end{proof}

\begin{prop}\label{P14}
    $F_{14}\nprec I\times I,I\times I^c,I\times T,I^c\times I^c,I^c\times T$ and $F_{14}\nprec I\times I\times T,I\times I^c\times T,I^c\times I^c\times T$.  Moreover, these are the only 2 and 3-fold products that avoid $F_{14}$.
\end{prop}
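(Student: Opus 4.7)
The plan is to reformulate $F_{14} \prec A$ as a support-difference condition and then decompose it across the factors of the product. Since the two columns of $F_{14}$ have disjoint $1$-supports of size $3$ each, we have $F_{14} \prec A$ if and only if $A$ admits two columns $c, d$ with $|\mathrm{supp}(c) \setminus \mathrm{supp}(d)| \ge 3$ and $|\mathrm{supp}(d) \setminus \mathrm{supp}(c)| \ge 3$: such columns permute (with three witnessing rows on each side) into the $F_{14}$ pattern, and the converse is immediate. For a product $X_1 \times \cdots \times X_p$ with columns $c = (c_1, \ldots, c_p)$ and $d = (d_1, \ldots, d_p)$, the set differences split across factors as $|\mathrm{supp}(c) \setminus \mathrm{supp}(d)| = \sum_{i} |\mathrm{supp}(c_i) \setminus \mathrm{supp}(d_i)|$.

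I would next tabulate the possible factor contributions to the pair $\bigl(|\mathrm{supp}(c_i) \setminus \mathrm{supp}(d_i)|,\, |\mathrm{supp}(d_i) \setminus \mathrm{supp}(c_i)|\bigr)$. Any two distinct columns of $I_n$ or of $I_n^c$ contribute $(1, 1)$, while any two distinct columns of $T_n$ contribute $(k, 0)$ or $(0, k)$ for some $k \ge 1$, because supports in $T_n$ are totally nested. Letting $a$ be the number of $\{I, I^c\}$ factors and $b$ the number of $T$ factors, the attainable pairs of total difference sizes are exactly $(a + S_L,\, a + S_R)$, where $S_L$ and $S_R$ are sums of $T$-gaps with each $T$ factor contributing entirely to one side. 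So the question becomes: when can both $a + S_L \ge 3$ and $a + S_R \ge 3$ be simultaneously realized?

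This reduces to a finite case analysis on $(a, b)$. When $a \ge 3$ both sides automatically clear $3$ once the $I/I^c$ columns are chosen distinct. When $a \le 1$ and $b \ge 2$, we can place two $T$-gaps on opposite sides so each receives a gap of size $\ge 3 - a$, which is realizable for $m$ large enough. In all remaining cases, namely $a = 2$ with $b \le 1$ or $a \le 1$ with $b \le 1$, at most one side receives any $T$-contribution, so the opposite side is bounded by $a \le 2 < 3$, forcing failure. Specializing to $p = 2$, the unique success is $(a, b) = (0, 2) = T \times T$, so the other five $2$-fold products avoid $F_{14}$. Specializing to $p = 3$, the successes are $(3, 0), (1, 2), (0, 3)$ and the unique failure is $(2, 1)$, which corresponds exactly to $I \times I \times T$, $I \times I^c \times T$, and $I^c \times I^c \times T$, matching the statement.

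There is no serious obstacle here: the argument is essentially a bookkeeping exercise once the key observation and factor tabulation are in place. The only minor verification is that the factor size $m/p$ is large enough to realize the required $T$-gaps (at most $3$) and distinct columns in each $I$ or $I^c$ factor, both of which hold for $m$ sufficiently large.
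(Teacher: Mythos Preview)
Your proof is correct, but it takes a genuinely different route from the paper's argument. The paper works row-by-row: it observes that any three rows of $F_{14}$ contain both $1_{2,1}$ and $0_{2,1}$ (forcing each $I$ or $I^c$ factor to contribute at most two rows to an embedding), and that any four rows contain $I_2$ (forcing each $T$ factor to contribute at most three rows). From these row-contribution bounds the paper reads off the $2$-fold cases, then separately checks $F_{14}\prec T_4\times T_4$ and handles the $3$-fold cases by noting that three $I/I^c$ blocks can each supply an $I_2$ (hence $F_{14}$), while two $I/I^c$ blocks plus one $T$ still leave an uncovered $I_2$.

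Your approach is column-based: you recast $F_{14}\prec A$ as the existence of two columns whose mutual support differences are both at least $3$, and then use that support differences split additively over product factors, with $I/I^c$ factors contributing at most $(1,1)$ and $T$ factors contributing one-sidedly. This is cleaner and more uniform: the entire $2$- and $3$-fold classification drops out of a single arithmetic inequality on $(a,b)$, with no ad hoc verification of $T\times T$ or of the pure $I/I^c$ triple product. One small imprecision: your phrase ``the attainable pairs of total difference sizes are exactly $(a+S_L,\,a+S_R)$'' overstates things, since equal factor-columns contribute $(0,0)$; the correct statement is that the pair is $(a'+S_L,\,a'+S_R)$ for some $0\le a'\le a$. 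This does not affect your conclusions, as taking $a'=a$ maximises both coordinates and the avoidance arguments only need the upper bound $a' \le a$ on the side receiving no $T$-contribution.
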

\begin{proof}
    Note that any selection of three rows of $F_{14}$ contains $1_{2,1}$ and $0_{2,1}$, but neither $I$ nor $I^c$ contains both of these configurations so any $I$ or $I^c$ in a product could contribute at most 2 rows to find $F_{14}$.  Similarly, any four rows of $F_{14}$ contains $I_2$, and hence $T$ can contribute at most 3 rows in finding $F_{14}$ for any product it is involved in.  This shows that all 2-fold products except possibly $T\times T$ avoids $F_{14}$, but it isn't too difficult to see that $F_{14}\prec T_4\times T_4\prec T\times T$.
    
    Any 3-fold product involving only $I$'s and $I^c$'s will contain $F_{14}$, as each of these can contribute an $I_2$ from two of their rows and three of these put together give $F_{14}$.  Thus the only possible 3-fold product that could avoid $F_{14}$ are products using precisely one $T$ and the rest $I$'s and $I^c$'s.  And this does in fact avoid $F_{14}$, as the most each $I$ and $I^c$ can contribute is two rows that form an $I_2$, but this still leaves at least one $I_2$ to be covered by the $T$, which  it can not do.
\end{proof}

\begin{prop}\label{P15}
    $F_{15}\nprec I\times I,I\times T,I^c\times I^c,I^c\times T,T\times T$ and $F_{15}\nprec I\times I\times T,I^c\times I^c\times T$.   Moreover, these are the only 2 and 3-fold products that avoid $F_{15}$.
\end{prop}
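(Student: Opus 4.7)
The plan is to reduce each containment $F_{15}\prec B_1\times\cdots\times B_p$ to a question of partitioning the six rows of $F_{15}$, then to carry out the enumeration uniformly after a short bookkeeping step. I would call rows $1,2,3$ of $F_{15}$ \emph{light} (single $1$, with $1$-positions $X_1=\{1\}$, $X_2=\{2\}$, $X_3=\{3\}$) and rows $4,5,6$ \emph{heavy} ($X_4=\{2,3\}$, $X_5=\{1,3\}$, $X_6=\{1,2\}$). The pairs $\{i,i+3\}$ are the \emph{anti-matching} pairs, since $X_{i+3}=\{1,2,3\}\setminus X_i$; rows $\{1,2,3\}$ of $F_{15}$ form $I_3$ and rows $\{4,5,6\}$ form $I_3^c$.

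The first step is to observe that $F_{15}\prec B_1\times\cdots\times B_p$ is equivalent to partitioning $\{1,\ldots,6\}$ into sets $S_1,\ldots,S_p$ so that $F_{15}$ restricted to $S_i$ is a sub-matrix of $B_i$ (where the three columns of $F_{15}$ may be matched to repeated columns of $B_i$). A short case check shows that $S$ fits inside $I_k$ iff the $X_i$ for $i\in S$ are pairwise disjoint; inside $I_k^c$ iff the $0$-positions $Y_i=\{1,2,3\}\setminus X_i$ are pairwise disjoint; and inside $T_k$ iff the three column-supports $R_j(S)=S\cap\{i:(F_{15})_{ij}=1\}$ form a chain under inclusion (so $S$ admits a row order making every column of $F_{15}$ restricted to $S$ a prefix). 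The maximal fitting subsets are then: for $I$, subsets of $\{1,2,3\}$ and the anti-matching pairs $\{1,4\},\{2,5\},\{3,6\}$; for $I^c$, subsets of $\{4,5,6\}$ and those same anti-matching pairs; for $T$, subsets of the six non-anti-matching top-bottom pairs $\{1,5\},\{1,6\},\{2,4\},\{2,6\},\{3,4\},\{3,5\}$. In particular each of $I$ and $T$ admits at most one heavy row and each of $I^c$ and $T$ admits at most one light, and any heavy placed in $I$ (resp.\ light in $I^c$) can only be paired with its anti-match.

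The $2$-fold part now follows immediately, because the combined heavy-capacity of the two blocks must be at least $3$, and among the six $2$-fold products only $I\times I^c$ satisfies this; the partition $(\{1,2,3\},\{4,5,6\})$ witnesses $F_{15}\prec I\times I^c$. For the eight $3$-fold products that must contain $F_{15}$ I would exhibit explicit witness partitions: $(\{1,4\},\{2,5\},\{3,6\})$ for $I\times I\times I$ and $I^c\times I^c\times I^c$; a partition of the form $(\{1,2,3\},\emptyset,\{4,5,6\})$, after reordering blocks, for every product containing both an $I$-block and an $I^c$-block; $(\{1,4\},\{2,6\},\{3,5\})$ for $I\times T\times T$ and $I^c\times T\times T$; and $(\{1,5\},\{2,6\},\{3,4\})$ for $T\times T\times T$.

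The main obstacle is the two $3$-fold avoidance claims $F_{15}\nprec I\times I\times T$ and $F_{15}\nprec I^c\times I^c\times T$, where the pure capacity count is not enough. For $I\times I\times T$ I would argue as follows: the three heavies must be distributed one per block, so write the heavy in block $i$ as $x_i+3$ with $(x_1,x_2,x_3)$ a permutation of $(1,2,3)$. The only light admissible alongside an $I$-block heavy is its anti-match, while the light alongside the $T$-block heavy $x_3+3$ must \emph{not} be its anti-match $x_3$. Hence the lights actually covered form a subset of $\{x_1,x_2\}\cup\{\ell\}$ for some $\ell\ne x_3$, which cannot equal $\{1,2,3\}=\{x_1,x_2,x_3\}$, giving the required contradiction. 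The case $I^c\times I^c\times T$ is symmetric via the light-heavy duality, completing the proposition.
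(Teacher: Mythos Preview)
Your proof is correct and follows essentially the same route as the paper's. Both arguments hinge on the observation that an $I$-block (resp.\ $T$-block) can carry at most one row of the $I_3^c$ half of $F_{15}$, that an $I$-block carrying the heavy row $i+3$ can only be paired with its anti-match $i$, while a $T$-block carrying $i+3$ \emph{cannot} be paired with $i$; this forces the light row $x_3$ to go uncovered in $I\times I\times T$. Your presentation is a bit more systematic---you set up the partition/fitting-subset framework and a uniform capacity count, whereas the paper argues the $2$-fold cases by citing the known facts $I_3^c\nprec I\times I,\,I\times T,\,T\times T$ and $I_3\nprec I^c\times I^c,\,I^c\times T$ and then handles $I\times I\times T$ by the same anti-match reasoning you give---but the substance is the same.
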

\begin{proof}
    As $F_{15}$ consists of an $I_3$ on top of an $I^c_3$, it is clear that $F_{15}\prec I\times I^c$.  Note that $I_3^c\nprec I\times I,I\times T,T\times T$, and hence $F_{15}$ will not be contained in any of these products.  Similarly $I_3\nprec I^c\times I^c$ implies that $F_{15}\nprec I^c\times I^c,I^c\times T$.
    
    To see that $F_{15}\nprec I\times I\times T$, note that any two rows of the $I_3^c$ of $F_{15}$ contains $1_{2,1}$ (so $I$ can contribute to at most one row of $I_3^c$) and $I_2$ (so $T$ can contribute to at most one row of $I_3^c$).  Consequently, each of the $I$'s and the $T$ must contribute to precisely one row of the $I_3^c$.  But if an $I$ contributes to the $i$th row of $F_{15}$ ($i\ge 4$), then the only other row it can contribute to is the $(i-3)$rd row (as using any other row gives a $1_{2,1}$).  But if $T$ covers the $i$th row ($i\ge 4$), it can not also contribute to the $(i-3)$rd row, as these two rows contain an $I_2$.  Thus no matter which rows of the $I_3^c$ the $I$ and $T$ blocks cover, it will be impossible to cover all 6 rows of $F_{15}$.  It is not difficult to show that $F_{15}\prec I\times T\times T$ by finding rows 1 and 4 in $I$, rows 3 and 5 in the first $T$ and rows 2 and 6 in the second $T$.  Similarly $F_{15}\prec T\times T\times T$ by finding rows 1 and 5 in one T, 2 and 6 in another, and 3 and 4 in the last.
\end{proof}

From these constructions we are able to show that $\forb(m,Q,F)=\Theta(m^2)$ where $Q$ is a minimal quadratic configuration and $F$ is either $F_{14}$ or $F_{15}$ with the exception of the pairing $Q=Q_8$ and $F=F_{14}$ (as the only 2-fold product that avoids $Q_8$ is $T\times T$, which  is the only 2-fold product that contains $F_{14}$).  We would predict based on our previous work that $\forb(m,Q_8,F_{14})=o(m^2)$, but we are unable to show this.

\begin{quest}
    What is $\forb(m,Q_8,F_{14})$?
\end{quest}

The problem of pairing $F_{14}$ and $F_{15}$ with other cubics is also a difficult question.  Through the constructions we listed, it is possible to show that $\forb(m,F_1,F_2)=\Omega(m^2)$ for $F_1$ either $F_{14}$ and $F_{15}$ and $F_2$ any other simple minimal cubic configuration, and that $\forb(m,F_{14},F_{15})=\Theta(m^3)$, as well as $\forb(m,F_1,F_2)=\Theta(m^3)$ where $F_1$ is $F_{14}$ or $F_{15}$ and $F_2$ is $F_{12}$ or $F_{12}^c$.  Unfortunately, we are unable to prove any tighter bounds.

\begin{quest}
    What is $\forb(m,F_1,F_2)$ in general for $F_1=F_{14}$ or $F_{15}$ and $F_2$ any simple minimal cubic configuration?
\end{quest}

One potential route for proving these results, at least for $F_{14}$, would be to characterize how matrices in $A\in \Avoid(m,F_{14})^{=t}$ must look like as was done for $Q_9$ in \cite{ABS11}.  However, classifying $t$-columns of $F_{14}$ seems to be a more difficult problem compared to $Q_9$.

\begin{quest}
    Is there a nice characterization of matrices $A\in \Avoid(m,F_{14})^{=t}$?
\end{quest}

\end{document}